 \theoremstyle{plain}
 \newtheorem{theorem}{Theorem}[section]
 \newtheorem{lemma}[theorem]{Lemma}
 \newtheorem{question}[theorem]{Question}
 \newtheorem{corollary}[theorem]{Corollary}
 \newtheorem{proposition}[theorem]{Proposition}
 \newtheorem{excer}[theorem]{Excercises}
 \theoremstyle{definition}
 \newtheorem{definition}[theorem]{Definition}
 \theoremstyle{remark}
 \newtheorem{remark}[theorem]{Remark}
 \newcommand{\be}{\begin{equation}}
\newcommand{\ene}{\end{equation}}
\newcommand{\br}{\begin{remark}}
\newcommand{\er}{\end{remark}}
\newcommand{\bl}{\begin{lem}}
\newcommand{\el}{\end{lem}}
\newcommand{\bcor}{\begin{cor}}
\newcommand{\ecor}{\end{cor}}
\newcommand{\bpro}{\begin{pro}}
\newcommand{\epro}{\end{pro}}
\newcommand{\ben}{\begin{enumerate}}
\newcommand{\een}{\end{enumerate}}
\newcommand{\bp}{\begin{proof}}
\newcommand{\ep}{\end{proof}}
\newcommand{\bpo}{\begin{pro}}
\newcommand{\epo}{\end{pro}}
\newcommand{\beq}{\begin{equation*}}
\newcommand{\eeq}{\end{equation*}}
\newcommand{\bear}{\begin{eqnarray}}
\newcommand{\eear}{\end{eqnarray}}
\newcommand{\beqar}{\begin{eqnarray*}}
\newcommand{\eeqar}{\end{eqnarray*}}
\newcommand{\bt}{\begin{theorem}}
\newcommand{\et}{\end{theorem}}
\newcommand{\bex}{\begin{excer}}
\newcommand{\eex}{\end{excer}}
\newcommand{\cL}{{\mathcal{L}}}
\DeclareMathOperator{\dist}{dist}
\def\@setcopyright{} 
\def\serieslogo@{}
\begin{document}
\title{The Tits alternative for visibility spaces}

\author{Ran Ji \& Yunhui Wu}

\address{Academy for Multidisciplinary Studies, Capital Normal University, Beijing 100048, P. R. China}
\email[R.~J.]{\ jiran.jbm@gmail.com}

\address{Tsinghua University \& BIMSA, Beijing 100084, P. R. China}
\email[Y.~W.]{yunhui\_wu@mail.tsinghua.edu.cn}

\begin{abstract}
    Let $\Gamma$ be a finitely generated group acting properly discontinuously by isometries on a visibility CAT(0) space $X$ that satisfies the bounded packing property. We prove that $\Gamma$ satisfies the Tits alternative: it is either almost nilpotent or contains a free nonabelian subgroup of rank $2$. In the former case, it is equivalent to that the cardinality of the limit set of $\Gamma$ in the geometric boundary of $X$ is no greater than $2$. As an application of the Tits alternative, we show that any finitely generated torsion group acting properly discontinuously by isometries on such a space must be a finite group and have a global fixed point.
    \end{abstract}

\maketitle

\section{Introduction} \label{intro}

In \cite{MR286898}, Tits established a foundational result on the structure of finitely generated linear groups, now known as \emph{Tits alternative}. It states that such a group is either almost solvable or contains a nonabelian free subgroup. The Tits alternative has since become a central concept in geometric group theory, with important examples including: hyperbolic groups, mapping class groups of surfaces, outer automorphism group of free groups, and so on.

Substantial progress has been made in recent years in establishing the Tits alternative for various typical classes of groups. For instance, Osajda and Przytycki \cite{MR4522694} proved that the Tits alternative holds for groups acting discretely on CAT($0$) triangle complexes. Bell, Huang, Peng, and Tucker \cite{MR4780496} established a strong form of the alternative for semigroups of endomorphisms of the projective line. In \cite{MR4902158}, Horbez proved the Tits alternative for outer automorphism groups of a board class of free products, including outer automorphism groups of right-angled Artin groups.

When studying group actions on negatively curved spaces, one often expects a strong form of the Tits alternative. For example, if $\widetilde{M}$ is a Cartan-Hadamard manifold with pinched negative curvature, Besson, Courtois and Gallot \cite{MR2825167} showed that any group acting properly discontinuously by isometries on $\widetilde{M}$ is either almost nilpotent (i.e. it contains a nilpotent subgroup of finite index) or contains a nonabelian free subgroup and has uniformly positive algebraic entropy. In \cite{MR4275871}, Breuillard and Fujiwara further generalized this quantitative Tits alternative to Gromov hyperbolic spaces with bounded geometry.

It is natural to ask the following question.
\begin{question}\label{q-ta}
\label{main question}
Let $\Gamma$ be a group acting properly discontinuously by isometries on a proper CAT(0) space. Does $\Gamma$ always satisfy the Tits alternative?
\end{question}
This problem is widely open (see e.g. \cite{Be00}). 

Recall that a CAT($0$) space $X$ is called a \emph{visibility space} if every pair of distinct points in the geometric boundary $X(\infty)$ of $X$ can be joined by a geodesic line. Typical examples include CAT ($-1$) spaces, and more generally Gromov hyperbolic CAT($0$) spaces. If a proper visibility space admits a cocompact isometric group action, then it is Gromov hyperbolic. We remark here that a proper visibility space admitting a finite-volume quotient may not be a Gromov hyperbolic space; one may see \cite[Appendix]{JiWu} for an example. Visibility is an important geometric property that lies between nonpositive and uniformly negative curvature. A metric space is said to satisfy \emph{bounded packing property} if for any $r>\delta>0$, there exists a packing number $P=P(r,\delta)>0$ such that any geodesic ball of radius $r$ can be covered by at most $P$ geodesic balls of radius $\delta$. A typical example is a complete Riemannian manifold whose Ricci curvature is bounded from below. The main purpose of this paper is to give a positive answer to Question \ref{q-ta} when the space is a visibility space that satisfies the bounded packing property. More precisely,
\begin{theorem}
\label{Tits alternative}
Let $\Gamma$ be a finitely generated group acting properly discontinuously by isometries on a visibility CAT(0) space $X$ that satisfies the bounded packing property. Then exactly one of the following holds:
\begin{enumerate}
     \item $\Gamma$ contains a nonabelian free subgroup;
    \item $\Gamma$ is almost nilpotent, that is equivalent to $$|\mathcal{L}(\Gamma)|\leq 2$$ where $\cL(\Gamma)\subset X(\infty)$ is the set of limit points of $\Gamma$. 
\end{enumerate}
\end{theorem}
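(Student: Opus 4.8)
\emph{Proof proposal.} The plan is to run the classical hyperbolic-type dichotomy for the action, with the visibility hypothesis playing the role of negative curvature and the bounded packing property supplying the quantitative input. First I would record the basic setup: since balls in $X$ are totally bounded by the packing property, $X$ is proper, so $\overline{X}=X\cup X(\infty)$ is compact in the cone topology and $\mathcal{L}(\Gamma)$ is a well-defined compact $\Gamma$-invariant subset of $X(\infty)$. I would also recall the classification of isometries of a CAT(0) space into elliptic, hyperbolic, and parabolic, together with the two features of visibility spaces that drive everything: a hyperbolic isometry $g$ has exactly two fixed points $g^{\pm}\in X(\infty)$ (the endpoints of an axis) and acts on $X(\infty)$ with north--south dynamics (attracting $X(\infty)\setminus\{g^{-}\}$ to $g^{+}$), while a parabolic isometry fixing $\xi\in X(\infty)$ has $\xi$ as its only fixed point at infinity. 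Proper discontinuity forces point stabilizers in $X$ to be finite, so every elliptic element of $\Gamma$ has finite order. The exclusivity of the two alternatives is then immediate, since a virtually nilpotent group is amenable and hence has no nonabelian free subgroup; the work is to show $\Gamma$ always lands in one of the two classes and that the nilpotent class is exactly $\{|\mathcal{L}(\Gamma)|\le 2\}$.

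I would split according to the hyperbolic elements of $\Gamma$. \emph{Case 1: $\Gamma$ contains two independent hyperbolic isometries} (disjoint endpoint pairs at infinity). Then a ping-pong argument on $X(\infty)$ using the north--south dynamics shows that sufficiently high powers of the two generate a nonabelian free subgroup, and all four endpoints lie in $\mathcal{L}(\Gamma)$, so $|\mathcal{L}(\Gamma)|\ge 3$. \emph{Case 2: $\Gamma$ has a hyperbolic element but no two independent ones.} Conjugating a hyperbolic element shows $\Gamma$ preserves the common endpoint pair $\{p,q\}$; a finite-index subgroup $\Gamma'$ fixes $p$ and $q$, hence, by thinness of bigons in a visibility space, coarsely preserves a line $\ell$ joining them. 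The signed translation length along $\ell$ defines a homomorphism $\tau\colon\Gamma'\to\mathbb{R}$ whose image is discrete (a sequence of positive translation lengths tending to $0$, with axes near $\ell$, would violate proper discontinuity), hence infinite cyclic, while $\ker\tau$ has bounded orbits near $\ell$ and is all torsion, hence finite. So $\Gamma$ is virtually $\mathbb{Z}$, almost nilpotent, with $|\mathcal{L}(\Gamma)|=2$. \emph{Case 3: $\Gamma$ has no hyperbolic element.} Here I would first prove an ``elementary subgroup'' lemma: such a $\Gamma$ fixes a point of $X$ or of $X(\infty)$. The mechanism is that in a visibility space a product of two parabolics with distinct fixed points, or of a parabolic and a noncommuting elliptic, is hyperbolic, so the absence of hyperbolic elements forces the parabolic fixed points and the elliptic fixed-point sets to align; this yields either a bounded orbit (hence, by proper discontinuity, a finite group with $|\mathcal{L}(\Gamma)|=0$) or a unique common fixed point $\xi\in X(\infty)$, in which case one checks $\mathcal{L}(\Gamma)=\{\xi\}$.

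The remaining and, I expect, hardest case is the parabolic one: $\Gamma$ finitely generated, fixing a single $\xi\in X(\infty)$, with no hyperbolic element; one must show it is virtually nilpotent. The strategy is to invoke a Margulis-type lemma valid in any metric space with the bounded packing property (e.g.\ via the work of Breuillard--Green--Tao on approximate groups): there is $\varepsilon=\varepsilon(P)>0$ such that for every $x\in X$ the subgroup generated by $\{\gamma\in\Gamma:\ d(x,\gamma x)<\varepsilon\}$ is virtually nilpotent. One then needs a single basepoint $x$ at which all members of a fixed finite generating set $S$ have displacement $<\varepsilon$ simultaneously; since $\Gamma=\langle S\rangle$ lies in $\langle\text{short elements at }x\rangle$, it follows that $\Gamma$ is virtually nilpotent. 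For one parabolic $s$ we have $\inf_{x}d(x,sx)=0$ with minimizing sequences running out to $\xi$, but in a visibility CAT(0) space asymptotic rays need not converge (flat strips are allowed), so the generators' displacement functions do not automatically become small along a common ray. The crux is thus to combine visibility, proper discontinuity and the packing bound to manufacture this common ``almost-fixed point'': push the basepoint deep into a horoball at $\xi$ along a well-chosen ray and rule out persistent drift of the generators along horospheres by a discreteness argument powered by bounded packing. Once this is in place, Cases 1--3 together show $\Gamma$ is almost nilpotent or contains a nonabelian free subgroup, that these exclude one another, and --- by comparing the values of $|\mathcal{L}(\Gamma)|$ found in each case --- that almost nilpotence is equivalent to $|\mathcal{L}(\Gamma)|\le 2$.
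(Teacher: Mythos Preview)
Your overall architecture --- case split on the dynamics, ping-pong for free subgroups, virtually cyclic when a single hyperbolic pair is preserved, and the Breuillard--Green--Tao generalized Margulis lemma for the parabolic case --- matches the paper's. But the heart of the argument, the parabolic case, has a genuine gap.

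You correctly identify that the issue is finding a single $x\in X$ with $d(s_j x,x)<\varepsilon$ for all generators simultaneously, and you correctly note that visibility alone does not force this (asymptotic rays may stay at bounded nonzero distance). However, your proposed mechanism --- ``push the basepoint into a horoball and rule out persistent drift by a discreteness argument powered by bounded packing'' --- is not an argument, and I do not see how to make it into one directly. The paper's route to this ``common almost-fixed point'' is considerably more elaborate and uses several external ingredients you do not mention:
\begin{enumerate}
\item First, a sublinear orbit growth estimate (from \cite{JiWu}): for $\Gamma$ fixing $\xi$ and preserving horospheres, $\lim_{k\to\infty}\tfrac{1}{k}\max_{|\varphi|_w\le k}d(x,\varphi x)=0$. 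Combined with the packing bound (Lemma~\ref{packing}) this gives subexponential growth, hence amenability of $\Gamma$.
\item Then the Adams--Ballmann theorem is applied, not on $X$ (where the fixed point $\xi$ is already given and useless), but on the \emph{transverse space} $X_\xi$ of Caprace--Monod, which is again CAT(0) with bounded packing. This yields either a fixed point at infinity of $X_\xi$, or an invariant flat in $X_\xi$.
\item In the flat case, the sublinear estimate descends to the flat and a result of Breuillard--Fujiwara for isometries of $\mathbb{R}^n$ produces a fixed point there. In the boundary case, one passes to the next transverse space $X_{\xi,\eta_2}$ and iterates. The iteration terminates because bounded packing forces \emph{finite depth} of the transverse-space tower (Caprace--Monod).
\item A fixed point in some $X_{\xi,\eta_2,\ldots,\eta_m}$ then pulls back, via the $1$-Lipschitz projections, to $\inf_{x\in X}\max_j d(\phi_j x,x)=0$, and now Breuillard--Green--Tao applies.
\end{enumerate}
None of this structure is visible in your sketch, and it is exactly what replaces the hoped-for ``discreteness argument''.

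A smaller point: your Case~3 lemma relies on the claim that in a visibility space a product of two parabolics with distinct fixed points is hyperbolic. This is not obvious for visibility CAT(0) spaces (it is not the hyperbolic plane), and the paper does not use it. The paper instead organizes the dichotomy by $|\mathcal{L}(\Gamma)|$: when $|\mathcal{L}(\Gamma)|\ge 3$ and $\Gamma$ has a parabolic $\varphi$ fixing $\eta$, one finds $\phi\in\Gamma$ with $\phi(\eta)\neq\eta$ and plays ping-pong with $\varphi$ and $\phi\varphi\phi^{-1}$ directly (no hyperbolic element needed); when $|\mathcal{L}(\Gamma)|\le 2$ and there is no axial element, one shows the limit set is a single point fixed by $\Gamma$ with horospheres preserved.
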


\begin{remark}\label{re-mt}
\begin{enumerate}
\item[(a)] If $X$ is a visibility manifold that covers a noncompact Riemannian manifold $M$ of finite volume with sectional curvature in $[-1,0]$, and $\Gamma$ is the fundamental group of an end of $M$ which consists of parabolic isometries of $X$ satisfying $$|\mathcal{L}(\Gamma)|=1$$ (see \cite{MR577132}), Part $(2)$ of Theorem \ref{Tits alternative} then implies that $\Gamma$ is almost nilpotent. This result was first obtained in \cite{JiWu}, resolving a long-standing folklore conjecture. In contrast, Theorem \ref{Tits alternative} requires neither that $\Gamma$ is torsion-free nor that $X$ admits a finite-volume quotient; moreover, the space $X$ does not even need to be a Riemannian manifold. Therefore, Theorem \ref{Tits alternative} significantly generalizes \cite[Theorem 1.2]{JiWu}, in which the three assumptions are essential.  
\item[(b)] The bounded packing property (equivalent to \emph{bounded geometry}, see Lemma \ref{packing geometry}) is essential for the validity of Theorem \ref{Tits alternative} (see e.g. \cite[Section 6]{MR1243787}). Given a discrete isometric group action of $\Gamma$ on a proper CAT($0$) space $X$, one can use a warped product construction to endow $X \times \mathbb{R}$ with a proper CAT(-1) metric on which $\Gamma$ acts properly discontinuously and the limit set $\cL(\Gamma)$ of $\Gamma$ is a single point in the geometric boundary of $X \times \mathbb{R}$. Clearly, one can choose $\Gamma$ to be not an almost nilpotent group on a suitable CAT(0) space $X$. However, this does not violate Part $(2)$ of Theorem \ref{Tits alternative}, because the warped product $X \times \mathbb{R}$ does not satisfy the bounded packing property. 
\end{enumerate}
\end{remark}

\noindent  We now outline the main ideas in the proof of Theorem \ref{Tits alternative}. If the limit set $\mathcal{L}(\Gamma)\subset X(\infty)$ of $\Gamma$ satisfies $|\mathcal{L}(\Gamma)|\geq 3$, an essential step is to show that $\Gamma$ cannot consist only of torsion elements, then we will apply a Ping-pong argument to show the existence of a nonabelian free subgroup of $\Gamma$. For the remaining case, i.e. the second part of Theorem \ref{Tits alternative}, the relatively difficult part is to show that if $\Gamma$ is infinite and has a common point in $X(\infty)$, then $\Gamma$ is almost nilpotent. When $X$ is the universal cover of a finite-volume visible Riemannian manifold $M$ with curvature in $[-1,0]$ and $\Gamma$ is the fundamental group of an end of $M$, the almost nilpotency of $\Gamma$ was proved in \cite{JiWu}. They firstly showed the subexponential growth of $\Gamma$, therefore, the amenability of $\Gamma$. Then by applying the work of Adams-Ballmann \cite{MR1645958} to a transverse space $X_\xi$ of $X$ and using the induced cocompact action of $\Gamma$ on $X_\xi$, a global fixed point in $X_\xi$ for $\Gamma$ is obtained. Then the almost nilpotency for $\Gamma$ is followed by establishing a type of classical Margulis Lemma at infinity. To extend the work of \cite{JiWu} to a more general setting as stated in Theorem \ref{Tits alternative}, we first apply the work of \cite{JiWu} to deduce that $\Gamma$ is still amenable. Although the group $\Gamma$ does not have cocompact action on a transverse space again, we consider iterations of transverse spaces; using the works \cite{MR1645958} and \cite{MR4275871} of Breuillard-Fujiwara, we obtain a fixed point for $\Gamma$ in an iterated transverse space. This will imply that $$\inf\limits_{x\in X}\max\limits_{1\leq i\leq k}\dist(\phi_i\circ x, x)=0$$ where $\Gamma=\langle \phi_1,\cdots, \phi_k\rangle$. Then with the help of the remarkable work of Breuillard, Green and Tao \cite{MR3090256} in which they established a generalized Margulis Lemma for metric spaces with the bounded packing property that was conjectured by Gromov, we can conclude that $\Gamma$ is almost nilpotent. \\

In Theorem \ref{Tits alternative}, the group $\Gamma$ is allowed to contain elements of finite order. Recall that a \emph{torsion group} is a group in which every element has finite order. It is rather an algebraic fact that every finitely generated almost nilpotent torsion group is finite (see e.g. \cite{MR2825167}). While verifying the almost nilpotency (and hence the finiteness) of torsion groups is a common first step towards the Tits alternative, the question of whether a torsion group acting on a nonpositively (or negatively) curved space must have a global fixed point is central and of independent interest in geometric group theory. In this context, we recall the following well-known open question (see e.g. \cite{Sw99, Be00, HO22}).
\begin{question}
\label{torsion group}
Let $X$ be a proper CAT(0) space, and let $\Gamma$ be a finitely generated torsion group acting properly discontinuously by isometries on $X$. Does $\Gamma$ have a global fixed point, or equivalently, is $\Gamma$ necessarily finite?
\end{question}

By the second part of Remark \ref{re-mt}, one may always assume that the space $X$ in Question \ref{torsion group} is a proper CAT$(-1)$ space, in particular, a visibility space. Meanwhile, we will show that a torsion group acting by isometries on a proper visibility space will always have a unique limit point, in particular fixed by the whole group, in its geometric boundary if the group is unbounded (see Theorem \ref{singleton}). Applying Theorem \ref{Tits alternative} to finitely generated torsion groups, we obtain an affirmative answer to Question \ref{torsion group} in the case of visibility space satisfying the bounded packing property. More precisely,
\begin{theorem}
\label{global fixed point}
Let $X$ be a visibility CAT(0) space that satisfies the bounded packing property. Then any finitely generated torsion group acting properly discontinuously by isometries on $X$ is finite and has a global fixed point.
\end{theorem}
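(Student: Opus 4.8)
The plan is to derive Theorem \ref{global fixed point} from the Tits alternative established above (Theorem \ref{Tits alternative}) together with one algebraic fact and one classical geometric fact; the genuinely hard work is already packaged inside Theorem \ref{Tits alternative}, so what remains is a short deduction. Let $\Gamma$ be a finitely generated torsion group acting properly discontinuously by isometries on $X$. Since every nonabelian free group contains elements of infinite order, a torsion group can never contain a nonabelian free subgroup; hence alternative $(1)$ of Theorem \ref{Tits alternative} is impossible, and we conclude that $\Gamma$ is almost nilpotent, i.e. it contains a nilpotent subgroup $N$ of finite index.

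Next I would invoke the algebraic fact recalled in the introduction (see e.g. \cite{MR2825167}): a finitely generated almost nilpotent torsion group is finite. Concretely, $N$ has finite index in the finitely generated group $\Gamma$, so $N$ is itself finitely generated; running through the lower central series of $N$, each successive quotient is a finitely generated abelian torsion group and therefore finite, so $N$ is finite, and hence so is $\Gamma=[\Gamma:N]\cdot|N|<\infty$ in cardinality. Finally, a finite group acting by isometries on a complete CAT$(0)$ space has a global fixed point: the circumcenter of any $\Gamma$-orbit is fixed by $\Gamma$ (the Bruhat--Tits/Cartan fixed point theorem, which uses only that a bounded subset of a complete CAT$(0)$ space has a unique circumcenter), and $X$, being a proper CAT$(0)$ space, is complete. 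This yields the desired global fixed point. Conversely, a global fixed point $p$ forces $\Gamma$ to embed in the stabilizer of $p$, which is finite by proper discontinuity, so the two assertions ``$\Gamma$ is finite'' and ``$\Gamma$ has a global fixed point'' are indeed equivalent, matching the phrasing of Question \ref{torsion group}.

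There is essentially no obstacle at this stage: the only point worth flagging is that one cannot shortcut the argument by applying Theorem \ref{singleton} alone, since that result only produces a fixed point of an unbounded torsion group in the geometric boundary $X(\infty)$, not a fixed point inside $X$; it is precisely the passage through Theorem \ref{Tits alternative} that upgrades ``unbounded torsion group fixing a boundary point'' to ``finite group'', and finiteness is what delivers an honest global fixed point in $X$. Thus the proof reduces to the two lines above once Theorem \ref{Tits alternative} is in hand.
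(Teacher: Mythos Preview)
Your proof is correct and follows essentially the same route as the paper: apply Theorem \ref{Tits alternative} to rule out a nonabelian free subgroup, invoke the algebraic fact that a finitely generated almost nilpotent torsion group is finite, and then use the Cartan fixed point theorem (Lemma \ref{Cartan}). The paper's own proof compresses this to two lines, but the content is identical; your additional remarks on the converse and on why Theorem \ref{singleton} alone does not suffice are accurate and helpful but not strictly needed.
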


Question \ref{torsion group} remains open even under additional assumptions, such as $X$ being finite-dimensional or $\Gamma$ being a subgroup of a CAT($0$) group. Progress has been made in several special cases. Sageev \cite{Sa95} studied Question \ref{torsion group} for CAT($0$) cubical complexes. Norin, Osajda and Przytycki \cite{NOP22} resolved the case of two-dimensional complexes under mild hypotheses. Additionally, Izeki and Karlsson \cite{IK24} showed that a finitely generated torsion group of subexponential growth cannot act on a finite-dimensional CAT(0) space without a global fixed point. One may refer to the recent paper \cite{HO22} for a nice survey on this problem. In the negative direction, it is known that both Grigorchuk groups and Burnside groups can act on infinite-dimensional CAT($0$) cubical complexes without global fixed points. Serre \cite{Ser80} showed that every infinitely generated group admits an action on a tree without a global fixed point.

\subsection*{Plan of the paper}
This paper is organized as follows. In Section \ref{Preliminaries}, some preliminaries are provided. In Section \ref{torsion}, we prove that a torsion group acting on a proper visibility space has a unique limit point at infinity, if it exists. In Section \ref{sec-fg}, we prove the first part of Theorem \ref{Tits alternative}. We finish the proofs of Theorem \ref{Tits alternative} and \ref{global fixed point} in the last section.  

\subsection*{Acknowledgement}
The authors are very grateful to Jingyin Huang and Anders Karlsson for their insightful and helpful comments and suggestions that improve the quality of this article. The authors also thank Hiroyasu Izeki and Wenyuan Yang for their interest in this work. Y.~W. is partially supported by NSFC grants 12361141813 and 12425107.

\tableofcontents

\section{Preliminaries} \label{Preliminaries}

In this section, we set up certain notation and review essential concepts from the theory of spaces of nonpositively curved geometry. 

\subsection*{CAT($0$) spaces} \label{CAT} \hfill\\

Let $X$ be a complete geodesic metric space. We say $X$ is a \emph{CAT($0$) space} if for any three points $x,y,z \in X$ we have $$d^2(z,m) \leq \dfrac{1}{2}\left(d^2 \left(z,x \right)+d^2\left(z,y\right)\right)-\dfrac{1}{4}d^2(x,y),$$ where $m$ is the midpoint of a geodesic segment from $x$ to $y$. CAT($0$) spaces naturally generalize Cartan-Hadamard manifolds. 

A metric space $X$ is said to be \emph{proper} if every closed bounded set is compact. In a proper CAT($0$) space, the \emph{(Alexandrov) angle} $\angle_p(\gamma_1,\gamma_2)$ between two unit speed geodesics $\gamma_1, \gamma_2$ emanating from the same point $p$ is defined by
\begin{equation*}
\angle_p(\gamma_1,\gamma_2)=\lim_{t \to 0} 2 \arcsin \dfrac{d(\gamma_1(t),\gamma_2(t))}{2t}.
\end{equation*}
We denote by $\gamma_{p,q}$ the unit speed geodesic from $p$ to $q$, and for $x,y \in X$, we write $\angle_p(x,y)$ for the angle between $\gamma_{p,x}$ and $\gamma_{p,y}$.

We say $X$ has \emph{bounded geometry} if for any $r>\delta>0$, there exists $n_{r,\delta} \in \mathbb{N}^*$ such that any geodesic ball of radius $r$ contains the centers of at most $n_{r,\delta}$ disjoint geodesic balls of radius $\delta$. A complete metric space with bounded geometry is always proper.  A typical example of a CAT($0$) space with bounded geometry is a complete Riemannian manifold with Ricci curvature bounded below.

We say that a metric space satisfies \emph{bounded packing property} if for any $r>\delta>0$, there exists a packing number $P=P(r,\delta)>0$ such that any geodesic ball of radius $r$ can be covered by at most $P$ geodesic balls of radius $\delta$.
\begin{lemma}(\cite[(8)]{MR4381234})
\label{packing geometry}
A metric space $X$ has bounded geometry if and only if it satisfies the bounded packing property.
\end{lemma}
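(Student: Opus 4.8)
The plan is to prove the two implications separately, using a maximal $\delta$-separated subset of a ball as the bridge between the "packing" (disjoint balls inside a ball) and "covering" (covering a ball by small balls) formulations. Both directions are elementary comparisons of packing and covering numbers, so I expect no genuine obstacle; the only care needed is in the bookkeeping of radii (passing between radius $\delta$ and radius $\delta/2$) and in the harmless open/closed ball convention.

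First I would show that bounded geometry implies the bounded packing property. Fix $r>\delta>0$ and a geodesic ball $B(x,r)$. Choose a maximal subset $\{x_1,\dots,x_N\}\subset B(x,r)$ with $d(x_i,x_j)\ge \delta$ for $i\ne j$ (a maximal $\delta$-separated set). By maximality, every point of $B(x,r)$ lies within distance $\delta$ of some $x_i$, so $\{B(x_i,\delta)\}_{i=1}^N$ covers $B(x,r)$. On the other hand, since the $x_i$ are pairwise at distance $\ge\delta$, the balls $\{B(x_i,\delta/2)\}_{i=1}^N$ are pairwise disjoint, and all their centers $x_i$ lie in $B(x,r)$. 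Bounded geometry (applied with radii $r$ and $\delta/2$) therefore gives $N\le n_{r,\delta/2}$, so the packing number $P(r,\delta):=n_{r,\delta/2}$ works.

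Conversely, assume the bounded packing property and fix $r>\delta>0$. Let $x_1,\dots,x_N$ be the centers of pairwise disjoint balls $B(x_i,\delta)$ with all $x_i\in B(x,r)$. Using the bounded packing property with radii $r$ and $\delta/2$, cover $B(x,r)$ by balls $B(y_1,\delta/2),\dots,B(y_P,\delta/2)$ with $P=P(r,\delta/2)$. If two distinct centers $x_i,x_j$ lay in a common $B(y_k,\delta/2)$, then $d(x_i,x_j)<\delta$, forcing $x_i\in B(x_i,\delta)\cap B(x_j,\delta)$ and contradicting disjointness. Hence each $B(y_k,\delta/2)$ contains at most one $x_i$, so $N\le P$, and $n_{r,\delta}:=P(r,\delta/2)$ witnesses bounded geometry. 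Combining the two implications proves the equivalence. As noted, there is no hard step here: the argument is a standard doubling-free packing–covering comparison, and the statement is exactly \cite[(8)]{MR4381234}.
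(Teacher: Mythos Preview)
Your proof is correct and follows essentially the same maximal-separated-set / pigeonhole argument as the paper. The only cosmetic difference is that in the direction ``bounded packing $\Rightarrow$ bounded geometry'' the paper covers by balls of radius $\delta$ rather than $\delta/2$ (noting that if $x_i,x_j$ both lie in a covering ball $B(y,\delta)$ then $y\in B(x_i,\delta)\cap B(x_j,\delta)$), which gives the slightly sharper bound $n_{r,\delta}\le P(r,\delta)$ but is otherwise the same idea.
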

\begin{proof}
Suppose that $X$ satisfies the bounded packing property. Given $0<\delta<r$, there exists $P=P(\delta,r)$ such that every geodesic ball $B_x(r)$ can be covered by at most $P$ geodesic balls of radius $\delta$. It follows that there are at most $P$ disjoint geodesic balls $B_{x_i}(\delta)$ with centers $x_i \in B_x(r)$, since a single geodesic ball of radius $\delta$ cannot cover two distinct such centers.

Conversely, suppose that $X$ has bounded geometry. For $0<\delta<r$, let $n=n_{r,\delta/2}$ be the maximal number of disjoint geodesic balls $B_{x_i}(\delta)$ of radius $\delta/2$ with centers in $B_x(r)$. Then we have $$B_x(r) \subset \bigcup_{i=1}^{n_{r,\delta/2}} B_{x_i}(\delta),$$ otherwise for any $y \in B_x(r) \setminus \bigcup_{i=1}^{n_{r,\delta/2}} B_{x_i}(\delta)$, the ball $B_y(\delta/2)$ would be disjoint from all $B_{x_i}(\delta/2)$, contradicting the definition of $n$.
\end{proof}

The following lemma shows that in proper $CAT(0)$ spaces, the packing number for large balls is controlled by that for small balls. For Riemannian manifolds, this can be interpreted as a volume growth condition.  The converse is true under the additional assumption of geodesic extension property, see \cite[Example 4.3]{MR4381234}.

\begin{lemma}(\cite[Lemma 4.7]{MR4381234})
\label{packing}
Let $X$ be a CAT($0$) space. If every geodesic ball of radius $6$ can be covered by at most $P$ balls of radius $1$, then  every geodesic ball of radius $R$ can be covered by at most $(1+P)^R$  balls of radius $1$.
\end{lemma}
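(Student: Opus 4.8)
The plan is to iterate the hypothesis, doubling (or rather, multiplying by a fixed factor) the radius at each step while keeping track of how the covering number grows. The key geometric input is that in a CAT($0$) space a ball of radius $2\rho$ is not too much "bigger" than a ball of radius $\rho$; more precisely, I would first establish the scaling statement: \emph{if every ball of radius $6$ is covered by at most $P$ balls of radius $1$, then every ball of radius $6$ is covered by at most $P$ balls of radius $1$ centered anywhere one likes} — actually the cleaner route is to prove that every ball of radius $2$ is covered by at most $P$ balls of radius $1$. Indeed, given $B_x(2)$, consider the concentric ball $B_x(6)$; cover it by $P$ balls $B_{y_j}(1)$, discard those not meeting $B_x(2)$, and for each surviving $y_j$ replace it by a point $y_j' \in B_x(2)$ within distance $1$ of $y_j$ if necessary — but one must be careful that $B_{y_j'}(1)$ still covers $B_{y_j}(1)\cap B_x(2)$, which fails in general. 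So instead I would argue directly at the scale that matters: by the CAT($0$) (in fact, just geodesic convexity of balls) one shows the covering number $N(\rho)$ of a $\rho$-ball by $1$-balls is submultiplicative in a suitable sense.

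The cleanest implementation: let $N$ denote the maximal number of $1$-balls needed to cover any ball of radius $6$, so $N \le P$. I claim any ball $B_x(6R)$ can be covered by $N^R$ balls of radius $6$ (hence by $N^R \cdot N \le P^{R+1} \le (1+P)^{R+1}$ balls of radius $1$, and after reindexing one gets the stated bound for radius-$R$ balls). To prove the claim, induct on $R$: cover $B_x(6R)$ by stepping outward. Cover $B_x(6)$ by $N$ balls $B_{z_1}(6),\dots,B_{z_N}(6)$ — here using that a $6$-ball is trivially covered by one $6$-ball, but the point is to cover the annular region $B_x(6R)\setminus B_x(6(R-1))$: every point $w$ in it lies on the geodesic $[x,w]$, whose point $w'$ at distance $6(R-1)$ from $x$ satisfies $d(w,w') \le 6$, so $w \in B_{w'}(6)$ with $w' \in B_x(6(R-1))$. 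By the inductive hypothesis $B_x(6(R-1))$ is covered by $N^{R-1}$ balls of radius $6$, say with centers $c_i$; but we actually need to cover $B_x(6(R-1))$ by $6$-balls in a way that lets us reach one more shell. Refine the induction: cover $B_x(6(R-1))$ by $N^{R-1}$ balls of radius $1$ (inductive hypothesis in the form we want), pick the centers $c_i$, and observe $B_x(6R) \subset \bigcup_i B_{c_i}(6+1) \subset \bigcup_i B_{c_i}(12)$ — too crude. The honest fix is to phrase the induction as: $B_x(6R)$ is covered by $N^R$ balls of radius $6$; then at the very end cover each of those $N^R$ balls of radius $6$ by $N\le P$ balls of radius $1$, giving $N^{R+1}\le (1+P)^{R+1}$, and absorb the off-by-one by noting $R\le 6R$ so a radius-$R$ ball is a fortiori covered by $N^{R}\le (1+P)^R$ balls of radius $1$ once one checks $N^R$ radius-$6$ balls suffice and then covers those — I would streamline this bookkeeping in the write-up.

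The main obstacle, and the step I would spend the most care on, is the passage from "$6$-balls cover $6$-balls trivially" to "finitely many $6$-balls cover a $6R$-ball," i.e.\ the geodesic-shelling argument: one must verify that radial projection along geodesics emanating from the center $x$ sends the shell $\{w : 6(k-1)\le d(x,w)\le 6k\}$ into a controlled neighbourhood of the sphere of radius $6(k-1)$, which is immediate from $d(w, \gamma_{x,w}(6(k-1))) = d(x,w) - 6(k-1) \le 6$, and then that each such projected point, being in $B_x(6(k-1))$, is covered by one of the inductively chosen $6$-balls, whose center is within $6$ of it, so $w$ lies in the concentric $12$-ball — revealing that the natural inductive quantity is the covering number by $12$-balls, not $6$-balls, unless one shrinks the shell width. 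Choosing shell width $3$ instead of $6$ (so that a projected point plus its $6$-ball stays inside a $6$-ball after a triangle-inequality estimate with room to spare) fixes this cleanly at the cost of a better constant than $(1+P)^R$, which still suffices since we only need \emph{some} exponential bound with base depending on $P$; I would then verify the exponent works out to at most $R$ by counting $\lceil R \rceil$ shells. Everything else — discarding empty balls, the final conversion from $6$-balls to $1$-balls, reindexing $6R \to R$ — is routine.
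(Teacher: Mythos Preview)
The paper does not actually prove this lemma; it is quoted verbatim from \cite[Lemma 4.7]{MR4381234} with no argument supplied. So there is no in-paper proof to compare against, and your proposal should be assessed on its own.

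Your underlying idea is the standard one and it works: radially project each point of $B_x(r+s)$ along the geodesic to $x$ onto $B_x(r)$, use an inductive $1$-ball cover of $B_x(r)$ with centers $c_i$, and conclude $B_x(r+s)\subset\bigcup_i B_{c_i}(s+1)$; then invoke the hypothesis to cover each $B_{c_i}(s+1)$ by $P$ balls of radius $1$, provided $s+1\le 6$. You correctly spotted the obstruction when $s=6$ (the triangle inequality gives radius $7$, not $6$) and proposed shrinking the shell width. The cleanest choice is $s=5$, not $s=3$: with $s=5$ one lands exactly on radius-$6$ balls and the recursion $N(r+5)\le P\cdot N(r)$ with $N(1)=1$ gives $N(1+5k)\le P^k$, and for $R>1$ the relevant $k=\lceil (R-1)/5\rceil$ satisfies $k\le R$, whence $P^k\le P^R\le (1+P)^R$. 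Your width-$3$ variant also works, with the same endgame.

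What needs fixing is only the presentation: the write-up reads as a sequence of abandoned attempts (scaling $6$-balls, covering $B_x(2)$, inducting on $6$-ball covers, then on $1$-ball covers) before arriving at the right formulation in the last paragraph. Strip out the false starts, state the induction once with the correct shell width, and carry out the two-line exponent check at the end rather than deferring it. Nothing mathematically new is needed.
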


\subsection*{The geometric boundary and the cone topology} \hfill\\

Let $X$ be a CAT($0$) space.  Two geodesic rays $\gamma_1$ and $\gamma_2$ in $X$ are said to be \emph{asymptotic}, denoted by $\gamma_1 \sim \gamma_2$, if there exists a constant $C$ such that for any $t \geq 0$ we have
$$d(\gamma_1(t),\gamma_2(t)) \leq C.$$

Define $X(\infty)$, the \emph{geometric boundary} of $X$, to be
$$X(\infty)= \text{the set of all geodesic rays}/ \sim.$$
We denote $\overline{X}=X \cup X(\infty)$ and $\gamma(\infty)$ as the equivalence class of $\gamma$. 

In the case when $\widetilde{M}$ is a Cartan-Hadamard manifold, the Cartan-Hadamard theorem implies that the exponential map at any $p \in \widetilde{M}$ induces a diffeomorphism between the tangent space $\mathrm{T}_p \widetilde{M}$ and  $\widetilde{M}$. The geometric boundary $\widetilde{M}(\infty)$ can thus be canonically identified with the unit tangent sphere $\mathrm{S}_p \widetilde{M}$.  It is natural to endow $\widetilde{M} \cup \widetilde{M}(\infty)$ with a topology such that $\widetilde{M}(\infty)$ is homeomorphic to the sphere $\mathrm{S}_p \widetilde{M}$

Let $x_0 \in X$ be a reference point of a CAT($0$) space $X$. For $r>0$, since the closed geodesic ball $\overline{B_{x_0}(r)}$ is convex, the projection map $p_r:X \to \overline{B_{x_0}(r)}$ is well-defined. Given $\eta \in X(\infty)$ and $\varepsilon,R>0$, We call
\begin{equation*}
T_{x_0}(\eta,r,\varepsilon)=\{x \in \overline{X}: d(x_0,x)>r, d(p_r(x),\gamma_{x_0,\eta}(r))<\varepsilon\}
\end{equation*}
a truncated cone in $\overline{X}$. Then the set of all open balls in $X$ and all truncated cones in $\overline{X}$ form a basis of a topology on $\overline{X}$, which is called the \emph{cone topology}. The cone topology on $X$ is independent of the choice $x_0$.

If $X$ is proper, the cone topology makes $\overline{X}$ a compactification of $X$. In general $\overline{X}$ is not compact.

\subsection*{The horocycle topology} \hfill\\ 

Let $X$ be a CAT($0$) space and $\xi \in X(\infty)$. Denote by $\gamma_{x, \xi}:[0,\infty] \to \overline{X}$ the geodesic ray parametrized by arc length with $\gamma_{x, \xi}(0)=x$ and $\gamma_{x, \xi}(\infty)=\xi$. The \emph{Busemann function} $B_{{x, \xi}}:X \to \mathbb{R}$ is defined by
\begin{equation*}
B_{{x, \xi}}(y)=\lim_{t \to \infty}\left(d\left(\gamma_{x,\xi}\left(t\right),y\right)-t\right).
\end{equation*}
The level sets of $B_{{x, \xi}}$ are called \emph{horospheres} centered at $\xi$ and the sublevel sets of $B_{\gamma_{x, \xi}}$ are called \emph{horoballs}. If two geodesic rays $\gamma_1 \sim \gamma_2$, then $B_{\gamma_1}-B_{\gamma_2}$ is a constant.

When $X$ is proper, there is a unique topology that makes $\overline{X}$ a compactification of $X$, such that the set of horoballs centered at $\xi$ forms a local basis for every $\xi \in X(\infty)$. This topology is called the \emph{horocycle topology}.

\subsection*{Visibility spaces} \hfill\\ 

We review the visibility axiom for spaces. Standard references are \cite{MR0336648} and \cite{MR1744486}. Some of the results therein are stated only for Riemannian manifolds but carries over to general proper metric spaces. We omit the proofs unless the original arguments make essential use of properties of manifolds that are not valid for metric spaces.

\begin{definition}
    A CAT($0$) space $X$ is said to satisfy the \emph{visibility axiom} if every pair of distinct points $\xi,\eta \in X(\infty)$ can be joined by a geodesic, i.e., there is a geodesic $g:\mathbb{R} \to X$ such that $g(-\infty)=\xi$ and $g(\infty)=\eta$. We say $X$ is \emph{visible} or a  \emph{visibility space}.
    \end{definition}

For proper CAT($0$) spaces, the following lemma is often used as an equivalent definition of visibility spaces.
\begin{lemma} (see e.g. \cite[Proposition 9.32]{MR1744486})
    \label{visibility}
    A proper CAT($0$) space $X$ is a visibility spaces if and only if for any $p \in X$ and $\varepsilon>0$, there exists a constant $R=R(p,\varepsilon)>0$ such that if $g$ is a geodesic (segment, ray) satisfying $d(p,g)\geq R$, then $$\angle_p(g) \leq \varepsilon,$$
    where $\angle_p(g)=\sup_{x,y \in g} \angle_p(x,y)$.
    \end{lemma}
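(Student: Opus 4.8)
The plan is to prove both implications separately, the easier one being that the angle estimate implies visibility.

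\medskip

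\emph{Angle estimate $\Rightarrow$ visibility.} Let $\xi\neq\eta\in X(\infty)$ and fix $p\in X$. Put $\alpha:=\angle_p(\xi,\eta)$, which is positive, since in a CAT($0$) space two geodesic rays from $p$ enclosing zero angle coincide. For each $n$ consider the geodesic segment $\sigma_n:=[\gamma_{p,\xi}(n),\gamma_{p,\eta}(n)]$. As the initial subsegments of $\gamma_{p,\xi}$ and $\gamma_{p,\eta}$ are exactly the geodesics from $p$ to the endpoints of $\sigma_n$, we get $\angle_p(\sigma_n)\geq\angle_p(\gamma_{p,\xi}(n),\gamma_{p,\eta}(n))=\alpha$, so by the contrapositive of the angle estimate $d(p,\sigma_n)<R(p,\alpha)=:R_0$ for all $n$; pick $z_n\in\sigma_n$ with $d(p,z_n)<R_0$. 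Parametrizing $\sigma_n$ by arclength with $\sigma_n(0)=z_n$, its endpoints are attained at parameters of absolute value $>n-R_0\to\infty$. Since geodesics are $1$-Lipschitz and $z_n$ lies in the compact ball $\overline{B_p(R_0)}$, Arzel\`a--Ascoli together with a diagonal argument gives a subsequence along which $\sigma_n\to c$ uniformly on compacts, with $c:\mathbb{R}\to X$ a geodesic line. To identify its endpoints: given $\delta>0$, the subsegment of $\sigma_n$ from $\sigma_n(t)$ to its $\eta$-endpoint lies at distance $\geq t-R_0$ from $p$, hence subtends an angle $\leq\delta$ at $p$ whenever $t\geq R_0+R(p,\delta)$; letting $n\to\infty$, using $\sigma_n(t)\to c(t)$ and continuity of $\angle_p$ for points at bounded distance, we obtain $\angle_p(c(t),\eta)\leq\delta$. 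Thus $\angle_p(c(t),\eta)\to0$ as $t\to\infty$, and since $d(p,c(t))\to\infty$, CAT($0$) comparison forces $c(t)\to\eta$ in the cone topology; hence $c(+\infty)=\eta$, and symmetrically $c(-\infty)=\xi$. So $X$ is visible.

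\medskip

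\emph{Visibility $\Rightarrow$ angle estimate.} Suppose it fails: there are $p\in X$, $\varepsilon_0>0$ and geodesics $g_n$ with $d(p,g_n)\to\infty$ but $\angle_p(g_n)>\varepsilon_0$. Choose $x_n,y_n\in g_n$ with $\angle_p(x_n,y_n)>\varepsilon_0$ and replace $g_n$ by $\sigma_n:=[x_n,y_n]$, keeping $d(p,\sigma_n)\to\infty$. Since $d(p,x_n),d(p,y_n)\to\infty$, after passing to a subsequence $x_n\to\xi$, $y_n\to\eta$ in the compact space $\overline{X}$, with $\xi,\eta\in X(\infty)$; moreover $\xi\neq\eta$, because $\angle_p(x_n,\xi)\to0$ and $\angle_p(y_n,\eta)\to0$, so the triangle inequality for angles gives $\angle_p(\xi,\eta)\geq\varepsilon_0$. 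By visibility there is a geodesic line $c$ with $c(-\infty)=\xi$, $c(+\infty)=\eta$; fix $q:=c(0)$ and let $z_n\in\sigma_n$ be nearest to $q$. The crux is to show that $d(q,z_n)$ stays bounded. Once this is known, $d(p,\sigma_n)\leq d(p,z_n)\leq d(p,q)+d(q,z_n)$ is bounded, contradicting $d(p,\sigma_n)\to\infty$. To bound $d(q,z_n)$: if it were unbounded, pass to a subsequence with $d(q,z_n)\to\infty$ and $z_n\to\zeta\in X(\infty)$, and use the first-variation inequalities $\angle_{z_n}(q,x_n)\geq\pi/2$, $\angle_{z_n}(q,y_n)\geq\pi/2$ (so $[q,z_n]$ is "perpendicular" to $\sigma_n$) together with the fact that, since $x_n\to\xi=c(-\infty)$ and $y_n\to\eta=c(+\infty)$, the nearest-point projections of $x_n$ and $y_n$ onto the convex line $c$ escape to its two opposite ends; this is the classical Eberlein--O'Neill argument (cf.\ \cite[Ch.~9]{MR1744486}), which uses only CAT($0$) convexity and the first-variation formula and therefore carries over verbatim to proper CAT($0$) spaces.

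\medskip

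I expect the main obstacle to be precisely this boundedness of $d(q,z_n)$ in the second implication: as the example $X=\mathbb{R}^2$ shows, it genuinely uses the visibility hypothesis and not merely the existence of a geodesic joining $\xi$ and $\eta$ (in $\mathbb{R}^2$ antipodal ideal points are joinable, yet segments can escape). The cleanest route is to transcribe the comparison-geometry argument of the standard references, checking at each step that nothing beyond CAT($0$) convexity, properness, and the first-variation inequality enters. All remaining ingredients --- positivity of $\angle_p(\xi,\eta)$ for $\xi\neq\eta$, the convergence $\angle_p(x_n,\xi)\to0$ when $x_n\to\xi$, and the equivalence of ``$d(p,c(t))\to\infty$ and $\angle_p(c(t),\eta)\to0$'' with ``$c(t)\to\eta$ in the cone topology'' --- are routine facts about proper CAT($0$) spaces.
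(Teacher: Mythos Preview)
The paper does not give its own proof of this lemma; it simply cites \cite[Proposition~9.32]{MR1744486}, under the blanket remark at the start of Section~\ref{Preliminaries} that the proofs in the references carry over to proper CAT($0$) spaces without change. Your sketch is precisely that standard argument---Arzel\`a--Ascoli plus the angle bound for the implication ``angle estimate $\Rightarrow$ visibility'', and the Eberlein--O'Neill contradiction for the converse---and at the crux of the reverse direction you yourself defer to the same reference, so there is nothing substantive to compare.

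One small technical point in your forward direction: the step ``$\angle_p(\sigma_n(t),\gamma_{p,\eta}(n))\leq\delta$ and $\sigma_n(t)\to c(t)$ imply $\angle_p(c(t),\eta)\leq\delta$'' appeals to continuity of $\angle_p$, but Alexandrov angles are in general only upper semicontinuous, which gives the inequality in the wrong direction. Here it is harmless, since $\gamma_{p,\gamma_{p,\eta}(n)}$ is literally an initial segment of $\gamma_{p,\eta}$, so the second geodesic is fixed and only the first varies; alternatively, bypass angles entirely by noting that the half-segments $[z_n,\gamma_{p,\eta}(n)]$ converge locally uniformly both to $c|_{[0,\infty)}$ and (by continuous dependence of geodesics on endpoints in proper CAT($0$) spaces) to $\gamma_{z,\eta}$, forcing $c(+\infty)=\eta$.
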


As a consequence,
    \begin{lemma}(see e.g. \cite[Proposition 4.7]{MR0336648})
    \label{small viewing angle}
        Let $X$ be a proper visibility space, and let $\{p_n\} \in X$ be a sequence converging to $\xi \in M(\infty)$ in the cone topology. Then for any neighborhood $U$ of $\xi$ in $\overline{X}$, we have
$$\lim\limits_{n\to \infty}\sup \limits_{x,y \in \overline{X}\setminus U}\angle_{p_n}(x,y)= 0.$$
    \end{lemma}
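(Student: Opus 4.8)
The plan is to reduce the whole statement to a single application of the visibility axiom (Lemma~\ref{visibility}) at one carefully chosen \emph{fixed} base point, after which a routine comparison--angle computation finishes it directly. Fix a reference point $o\in X$. Since the truncated cones $T_o(\xi,r,\varepsilon)$, $r,\varepsilon>0$, form a neighborhood basis of $\xi$ in $\overline{X}$, it suffices to treat $U=T_o(\xi,r_0,\varepsilon_0)$; set $K:=\overline{X}\setminus U$, which is compact (as $X$ is proper) and does not contain $\xi$. The decisive device is the point $q:=\gamma_{o,\xi}(r_0+1)$ on the ray from $o$ to $\xi$: since $q$ is an interior point of that ray, $\gamma_{q,o}$ and $\gamma_{q,\xi}$ together form a geodesic, so $\angle_q(o,\xi)=\pi$.

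The heart of the argument consists of two estimates at $q$. First, $\angle_q(\xi,a)>0$ for every $a\in K$: if $\angle_q(\xi,a)=0$, the triangle inequality for Alexandrov angles gives $\angle_q(o,a)\ge\angle_q(o,\xi)-\angle_q(\xi,a)=\pi$; since $\angle_q$ is dominated by the Euclidean comparison angle $\bar\angle_q$ in a CAT(0) space, this forces $q$ onto the geodesic $[o,a]$ (onto the ray $\gamma_{o,a}$ when $a\in X(\infty)$), and uniqueness of geodesics then makes $[o,a]$ coincide with $\gamma_{o,\xi}$ on $[0,r_0+1]$; hence $p_{r_0}(a)=\gamma_{o,\xi}(r_0)$ and $d(o,a)>r_0$, i.e. $a\in T_o(\xi,r_0,\varepsilon_0)=U$, a contradiction. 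As the Alexandrov angle is lower semicontinuous and $K$ is compact, this upgrades to a uniform bound $\beta:=\inf_{a\in K}\angle_q(\xi,a)>0$. Second, $\angle_q(\xi,p_n)\to0$: since $p_n\to\xi$ in the (base-point independent) cone topology, $\gamma_{q,p_n}(\rho)\to\gamma_{q,\xi}(\rho)$ for each fixed $\rho>0$, so $\angle_q(\xi,p_n)\le\bar\angle_q\big(\gamma_{q,\xi}(\rho),\gamma_{q,p_n}(\rho)\big)\to0$.

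Combining the two, for all large $n$ (a threshold not depending on the chosen points) and every $x\in K$ one gets
$$\angle_q([p_n,x])\ \ge\ \angle_q(p_n,x)\ \ge\ \angle_q(\xi,x)-\angle_q(\xi,p_n)\ \ge\ \tfrac{\beta}{2},$$
where the first inequality is immediate for $x\in X$ and, for $x\in X(\infty)$, follows by evaluating at points far out along the ray $[p_n,x]$ using lower semicontinuity of the angle. Applying Lemma~\ref{visibility} with the \emph{fixed} base point $q$ and $\varepsilon=\beta/4$, its contrapositive yields $d(q,[p_n,x])<R(q,\beta/4)=:C$, a constant independent of $x$ and of $n$. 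So for each $x\in K$ choose $z=z_n(x)\in[p_n,x]$ with $d(q,z)<C$; since $z$ lies on the geodesic $[p_n,x]$, $\angle_{p_n}(z,x)=0$. Then for any $x,y\in K$, taking such $z\in[p_n,x]$ and $z'\in[p_n,y]$,
$$\angle_{p_n}(x,y)\ \le\ \angle_{p_n}(z,z')\ \le\ \bar\angle_{p_n}(z,z'),$$
and since $d(z,z')<2C$ while $d(p_n,z),d(p_n,z')\ge d(o,p_n)-(r_0+1)-C\to\infty$, the comparison angle $\bar\angle_{p_n}(z,z')$ is at most $\arcsin\!\big(2C/(d(o,p_n)-(r_0+1)-C)\big)\to0$, uniformly in $x,y\in K$. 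This is precisely $\sup_{x,y\in\overline{X}\setminus U}\angle_{p_n}(x,y)\to0$.

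The main obstacle — and the only place where visibility is used — is the first estimate together with the bound it produces via Lemma~\ref{visibility} at $q$, which is what forces every geodesic $[p_n,x]$ with $x\in K$ to pass uniformly close to the one fixed point $q$. Two technical points need care: that $\angle_q(o,a)=\pi$ really places $q$ on $[o,a]$ (standard in CAT(0), with the boundary case handled by uniqueness of the geodesic ray from $o$ to a point of $X(\infty)$), and promoting the pointwise positivity $\angle_q(\xi,\cdot)>0$ to the uniform constant $\beta>0$ — for which one invokes lower semicontinuity of the Alexandrov angle on $\overline{X}$, or else a short compactness argument using $\gamma_{q,x_k}\to\gamma_{q,a}$ whenever $x_k\to a$ in $\overline{X}$.
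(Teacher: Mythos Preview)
The paper does not give its own proof of this lemma; it simply records the statement with a citation to Eberlein--O'Neill. Your argument is correct and is essentially the classical one from that reference, transported to the CAT($0$) setting: visibility at a single fixed base point $q$ forces every geodesic $[p_n,x]$ with $x\in K=\overline X\setminus U$ to pass within a uniform distance $C$ of $q$, after which an elementary comparison--triangle estimate at the distant vertex $p_n$ finishes the job.

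One terminological point deserves correction. You twice invoke ``lower semicontinuity'' of the Alexandrov angle, but in general geodesic spaces the Alexandrov angle is only \emph{upper} semicontinuous in its arguments. What actually saves both uses is the CAT($0$) hypothesis: for a fixed vertex $q$ the map $(x,y)\mapsto\angle_q(x,y)$ is \emph{continuous} on $X$ (Bridson--Haefliger, Proposition~II.3.3), and this continuity extends to $\overline X\setminus\{q\}$ because geodesics and rays issuing from $q$ vary continuously with their endpoints in the cone topology. With ``continuous at a fixed vertex'' in place of ``lower semicontinuous'', both steps go through unchanged: the infimum $\beta=\inf_{a\in K}\angle_q(\xi,a)$ is attained on the compact set $K$ and is positive, and $\angle_q\big(p_n,\gamma_{p_n,x}(t)\big)\to\angle_q(p_n,x)$ as $t\to\infty$, giving $\angle_q([p_n,x])\ge\angle_q(p_n,x)$. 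The remaining pieces --- the contradiction placing $q$ on $[o,a]$ when $\angle_q(\xi,a)=0$, and the final Euclidean estimate $\bar\angle_{p_n}(z,z')\to0$ --- are fine as written.
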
   

For visibility spaces, the horocycle topology is strictly larger than the cone topology. That is, for  any neighborhood $U$ of $\eta \in X(\infty)$ in the cone topology, there exist a horoball centered at $\eta$ that is contained in $U$. One may see  \cite[Proposition 4.8]{MR0336648} or \cite[Proposition II.9.35]{MR1744486} for more details.

\subsection*{Isometries of visibility spaces} \hfill\\

Let $\varphi$ be an isometry of a CAT($0$) space $X$. The \emph{displacement function} $d_\varphi:X \to \mathbb{R}$ associated with $\varphi$ is defined by
\begin{equation*}
d_\varphi(x)=d(x,\varphi(x)).
\end{equation*}
We call
\begin{equation*}
|\varphi|=\inf_{x \in X} d_\varphi(x)
\end{equation*}
the \emph{translation length} of $\varphi$. The collection of nontrivial isometries is divided into three disjoint classes.
\begin{definition}
An isometry $\varphi:X \to X$ is called
\begin{IEEEeqnarray*}{RRL}
(\textrm{a})& & \;\; \textrm{\emph{elliptic} \;if } d_\varphi \textrm{ achieves its minimum on } X \textrm{ and } |\varphi|=0.\\
(\textrm{b})& & \;\; \textrm{\emph{axial} \;if } d_\varphi \textrm{ achieves its minimum on } X \textrm{ and } |\varphi|>0.\\
(\textrm{c})& & \;\; \textrm{\emph{parabolic} \;if } d_\varphi \textrm{ does not achieve the minimum on } X.
\end{IEEEeqnarray*}
\end{definition}

An isometry $\varphi$ of a CAT($0$) space $X$ extends naturally to a homeomorphism of $\overline{X}$, which is still denoted by $\varphi$.  Denote by $\mathrm{Fix}_\varphi$ the set of fixed points of $\varphi$ in $\overline{X}$. For a visibility space $X$, we can characterize an isometry $\varphi$ of $X$ by $\mathrm{Fix}_\varphi$.
\begin{theorem}(see e.g. \cite[Lemma 6.8]{MR823981})
\label{fixed point}
Let $\varphi$ be an isometry of a proper visibility space $X$. Then exactly one of the following occurs,
\begin{IEEEeqnarray*}{RRL}
(\textrm{a})& & \;\; \textrm{$\varphi$ is elliptic, in which case $\varphi$ has a bounded orbit and $\mathrm{Fix}_\varphi \neq \emptyset \subset X$}, \\
(\textrm{b})& & \;\; \textrm{$\varphi$ is axial, in which case $\varphi$ has exactly two fixed points $\eta,\xi$ in $X(\infty)$ and $\varphi$} \\
& & \; \; \textrm{translates a geodesic joining $\eta$ and $\xi$},\\
(\textrm{c})& & \;\; \textrm{$\varphi$ is parabolic, in which case $\varphi$ has exactly one fixed point in $X(\infty)$}.
\end{IEEEeqnarray*}
\end{theorem}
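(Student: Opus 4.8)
The three classes in the definition are mutually exclusive and exhaust all isometries by construction, so it suffices to verify in each class the stated description of $\mathrm{Fix}_\varphi$; since these descriptions are themselves mutually exclusive, this simultaneously re-establishes the trichotomy. If $\varphi$ is elliptic, then $d_\varphi$ attains the value $0$, giving a point $p\in \mathrm{Fix}_\varphi\cap X$, and the identity $d(\varphi^n x,p)=d(x,p)$ shows every orbit is bounded; conversely, if $\varphi$ has a bounded orbit, the circumcentre of the closure of that orbit (unique by the CAT($0$) inequality, hence $\varphi$-invariant) is a fixed point, so $\varphi$ is elliptic. For the axial case, the existence of an \emph{axis} is the standard fact that a semisimple isometry with $|\varphi|>0$ translates a geodesic line $\ell$ (see \cite{MR1744486}); its endpoints $\eta:=\ell(-\infty)$ and $\xi:=\ell(+\infty)$ are fixed by $\varphi$, and $\varphi$ fixes no point of $X$ since $|\varphi|>0$. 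It therefore remains to rule out a third fixed point at infinity in the axial case and, symmetrically, to prove existence and uniqueness of the fixed point at infinity in the parabolic case. Both reduce to the following claim.

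\emph{Claim.} If $\varphi$ fixes two distinct points $\xi,\zeta\in X(\infty)$, then $\varphi$ translates a geodesic line; hence $\varphi$ is elliptic or axial, in particular not parabolic. The only place the visibility hypothesis is used is this: a proper visibility space contains no isometrically and convexly embedded flat half-plane $\mathbb{R}\times[0,\infty)$, since in such a half-plane a point $p$ at height $s$ would see the boundary line $L=\mathbb{R}\times\{0\}$ at angle $\angle_p(L)=\pi$ however large $s=d(p,L)$ is, contradicting Lemma \ref{visibility}. To prove the Claim, pick by the visibility axiom a geodesic line $g$ with endpoints $\xi,\zeta$ and consider its parallel set $P(g)$, which splits isometrically as $\mathbb{R}\times Y$ with $Y$ closed convex, hence proper, by \cite{MR1744486}; were $Y$ unbounded it would contain a geodesic ray (properness and Arzel\`a--Ascoli), producing a flat half-plane in $X$, so $Y$ is bounded. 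Every geodesic line with endpoints $\xi,\zeta$ is parallel to $g$ (the convex function $t\mapsto d(g(t),g'(\mathbb{R}))$ is bounded at both ends, hence constant), so these lines are precisely the $\mathbb{R}$-fibres of $P(g)$; since $\varphi$ fixes $\xi$ and $\zeta$ it preserves $P(g)=P(\varphi g)$ and permutes the fibres without reversing the $\mathbb{R}$-direction, so after the identification $P(g)\cong\mathbb{R}\times Y$ it has the form $\varphi(t,y)=(t+a,\psi y)$ for some $a\in\mathbb{R}$ and some isometry $\psi$ of $Y$. As $Y$ is bounded, $\psi$ has a bounded orbit, hence is elliptic and fixes some $y_0\in Y$; thus $\varphi$ translates the geodesic line $\ell_0:=\mathbb{R}\times\{y_0\}$ by $a$. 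Finally $|\varphi|=|a|$ is attained on $\ell_0$: with $\rho=d(x,\ell_0)$, projecting to $\ell_0$ gives $d(x,\varphi^n x)\ge n|a|-2\rho$, while $d(x,\varphi^n x)\le n\,d_\varphi(x)$, so $d_\varphi(x)\ge |a|-2\rho/n\to |a|$. Hence $\varphi$ is elliptic if $a=0$ and axial if $a\ne 0$, which proves the Claim.

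The uniqueness assertions now follow. In the axial case, if $\varphi$ fixed some $\zeta\notin\{\eta,\xi\}$ then, applying the Claim to the pair $(\eta,\zeta)$, $\varphi$ would translate a geodesic line with endpoints $\eta,\zeta$ realising $|\varphi|$, i.e.\ a second axis; but any two axes of an axial isometry are parallel, hence share their endpoints, forcing $\zeta=\xi$, a contradiction. So $\varphi$ has exactly the two fixed points $\eta,\xi$ at infinity and translates a geodesic joining them. In the parabolic case, $d_\varphi$ does not attain its infimum $m$, so every minimising sequence $\{x_k\}$ is unbounded (a bounded one would subconverge in $X$ by properness to a point attaining $m$); passing to a subsequence, $x_k\to\xi\in X(\infty)$ in the cone topology, and since $d(x_k,\varphi x_k)=d_\varphi(x_k)\to m$ is bounded we also have $\varphi x_k\to\xi$, whence $\varphi\xi=\xi$ by continuity of $\varphi$ on $\overline{X}$; so $\varphi$ has at least one fixed point at infinity, it cannot have two by the Claim, and it has none in $X$ since $d_\varphi$ attains no minimum. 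The main obstacle in this plan is the Claim, and within it the boundedness of the transverse factor $Y$ of the parallel set: this is exactly the step that exploits visibility (through the absence of flat half-planes) and that would fail for a general proper CAT($0$) space.
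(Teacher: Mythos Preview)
The paper does not supply its own proof of this statement; it is quoted as a known result from the literature (the reference \cite{MR823981}). Your write-up is therefore not competing with anything in the paper, and it is a correct self-contained argument following the standard route: reduce everything to the Claim that an isometry fixing two distinct ideal points is semisimple, prove the Claim by passing to the parallel set $P(g)\cong\mathbb{R}\times Y$ of a connecting line and using visibility (via Lemma~\ref{visibility}) to forbid flat half-planes and hence force $Y$ bounded, then read off existence and uniqueness in the parabolic and axial cases from the Claim.

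Two small points worth tightening. First, in the product form $\varphi(t,y)=(t+a,\psi y)$ you should say why the shift $a$ is independent of $y$: writing $\varphi(t,y)=(t+a(y),\psi y)$ and applying the isometry condition to pairs with equal $t$-coordinate gives $(a(y_1)-a(y_2))^2+d_Y(\psi y_1,\psi y_2)^2=d_Y(y_1,y_2)^2$; doing the same for $\varphi^{-1}$ shows $\psi$ is a surjective $1$-Lipschitz map with $1$-Lipschitz inverse, hence an isometry, forcing $a(y_1)=a(y_2)$. Second, your inequality $d(x,\varphi^n x)\ge n|a|-2\rho$ is weaker than necessary and the justification ``projecting to $\ell_0$'' is a bit loose: since $\varphi$ preserves the convex set $\ell_0$, it commutes with the nearest-point projection $\pi_{\ell_0}$, so in fact
\[
d(x,\varphi^n x)\ \ge\ d(\pi_{\ell_0}x,\pi_{\ell_0}\varphi^n x)\ =\ d(\pi_{\ell_0}x,\varphi^n\pi_{\ell_0}x)\ =\ n|a|,
\]
which already gives $d_\varphi(x)\ge|a|$ at $n=1$ without any error term.
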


It was proved in \cite{Wu18} that every parabolic isometry of a proper visibility space always has vanishing translation length. In accordance with the case of uniformly negative curvature, a proper visibility space  also exhibits the following contracting-expanding property along the axes of axial isometries.
\begin{theorem} (see \cite[Theorem 2.2]{MR656659})
\label{axial}
Let $X$ be a proper visibility space, and  let $\varphi$ be an axial isometry of $X$ and $\gamma$ an axis of $\varphi$. Given disjoint open neighborhoods $U$ of $\gamma(-\infty)$ and $V$ of $\gamma(\infty)$, there exists $N \in \mathbb{N}$ such that for every $n \geq N$, 
$$\varphi^n(\overline{X} \setminus U) \subset V \text{ and }\varphi^{-n}(\overline{X} \setminus V) \subset U.$$
\end{theorem}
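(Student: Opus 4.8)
The plan is to reduce the statement to a sequential ``North--South'' fact and then invoke compactness. Write $\ell=|\varphi|>0$, $o=\gamma(0)$, $\xi=\gamma(\infty)$, $\eta=\gamma(-\infty)$, so that $\varphi^{n}(\gamma(s))=\gamma(s+n\ell)$, in particular $\varphi^{n}(o)=\gamma(n\ell)$ and $\varphi^{-n}(o)=\gamma(-n\ell)$. I will prove: \emph{if $z_{k}\to\zeta$ in $\overline{X}$ (cone topology) with $\zeta\neq\eta$, then $\varphi^{n_{k}}(z_{k})\to\xi$ for every $n_{k}\to\infty$.} Granting this, suppose $\varphi^{n}(\overline{X}\setminus U)\not\subset V$ for arbitrarily large $n$; choose $n_{k}\to\infty$ and $z_{k}\in\overline{X}\setminus U$ with $\varphi^{n_{k}}(z_{k})\notin V$. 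Since $X$ is proper, $\overline{X}$ is compact, so after passing to a subsequence $z_{k}\to\zeta\in\overline{X}\setminus U$; because $\eta\in U$ we have $\zeta\neq\eta$, hence $\varphi^{n_{k}}(z_{k})\to\xi\in V$, so $\varphi^{n_{k}}(z_{k})\in V$ for $k$ large, a contradiction. Thus $\varphi^{n}(\overline{X}\setminus U)\subset V$ for all large $n$; the inclusion $\varphi^{-n}(\overline{X}\setminus V)\subset U$ follows by applying this to $\varphi^{-1}$, which is axial with axis $t\mapsto\gamma(-t)$ and hence interchanges the roles of $\xi$ and $\eta$, and one takes $N$ to be the larger of the two thresholds.

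The sequential fact rests on the following boundedness property, which is where visibility is used: \emph{in a proper visibility space, if $a_{k}\to a$ and $b_{k}\to b$ in $\overline{X}$ with $a\neq b$, then $\sup_{k}d\!\left(o,\gamma_{a_{k},b_{k}}\right)<\infty$.} Assuming it, fix $n_{k}\to\infty$ and set $c_{k}:=\gamma_{\gamma(-n_{k}\ell),\,z_{k}}$, parametrized by arclength with $c_{k}(0)=\gamma(-n_{k}\ell)$; note $\varphi^{n_{k}}\circ c_{k}=\gamma_{o,\,\varphi^{n_{k}}(z_{k})}$. Applying the boundedness property with $a_{k}=\gamma(-n_{k}\ell)\to\eta$ and $b_{k}=z_{k}\to\zeta\neq\eta$ yields $D<\infty$ with $d(o,c_{k})\le D$ for all $k$. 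Let $q_{k}=c_{k}(s_{k})$ be the point of $c_{k}$ nearest to $o$; then $d(o,q_{k})\le D$ and $s_{k}=d(\gamma(-n_{k}\ell),q_{k})$ satisfies $|s_{k}-n_{k}\ell|\le D$. Comparing the unit-speed geodesics $t\mapsto c_{k}(t)$ and $t\mapsto\gamma(-n_{k}\ell+t)$, which coincide at $t=0$, convexity of the metric in the CAT($0$) space $X$ gives, for $0\le t\le s_{k}$, $d\!\left(c_{k}(t),\gamma(-n_{k}\ell+t)\right)\le \tfrac{t}{s_{k}}\,d\!\left(q_{k},\gamma(-n_{k}\ell+s_{k})\right)\le \tfrac{2D\,t}{s_{k}}$, since $|-n_{k}\ell+s_{k}|\le D$. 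Applying the isometry $\varphi^{n_{k}}$ and using $\varphi^{n_{k}}(\gamma(-n_{k}\ell+t))=\gamma(t)$ gives $d\!\left(\gamma_{o,\varphi^{n_{k}}(z_{k})}(t),\gamma(t)\right)\le 2D\,t/s_{k}$ for $0\le t\le s_{k}$; as $s_{k}\ge n_{k}\ell-D\to\infty$, the geodesics $t\mapsto\gamma_{o,\varphi^{n_{k}}(z_{k})}(t)$ converge to $\gamma_{o,\xi}$ uniformly on compact sets, and since moreover $d(o,\varphi^{n_{k}}(z_{k}))=d(\gamma(-n_{k}\ell),z_{k})\ge s_{k}\to\infty$ (with $\varphi^{n_{k}}(z_{k})\in X(\infty)$ already when $z_{k}\in X(\infty)$), this forces $\varphi^{n_{k}}(z_{k})\to\xi$.

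The boundedness property is the step I expect to be the main obstacle. A naive attempt --- bounding the viewing angle $\angle_{o}(\gamma_{a_{k},b_{k}})$ via Lemma \ref{visibility} and concluding that the geodesics stay near $o$ --- does not work, because two distinct points of $\overline{X}$ can subtend angle $0$ at $o$ (in a tree, the ``branch point'' of the geodesic joining two ends can be arbitrarily far from $o$), so one must argue at the escaping point instead. Suppose $\sup_{k}d\!\left(o,\gamma_{a_{k},b_{k}}\right)=\infty$; along a subsequence the points $m_{k}$ of $\gamma_{a_{k},b_{k}}$ nearest to $o$ satisfy $d(o,m_{k})\to\infty$, so, refining once more by compactness of $\overline{X}$, $m_{k}\to\mu\in X(\infty)$ in the cone topology. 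Since $a\neq b$, at least one of them, say $a$, is $\neq\mu$; then for large $k$ the points $a_{k}$ and $o$ lie outside a fixed cone neighbourhood of $\mu$, so Lemma \ref{small viewing angle} gives $\angle_{m_{k}}(o,a_{k})\to 0$. On the other hand $m_{k}$ is the nearest-point projection of $o$ onto the closed convex set $\gamma_{a_{k},b_{k}}$, so the CAT($0$) nearest-point inequality gives $\angle_{m_{k}}(o,a_{k})\ge\pi/2$ --- a contradiction. (If it is $b$ rather than $a$ that differs from $\mu$, use $\angle_{m_{k}}(o,b_{k})$; since $a\neq b$, at least one case applies.) This proves the boundedness property and hence the theorem; the remaining points --- the convexity interpolation above and the routine bookkeeping for points of $X(\infty)$ in the limiting arguments --- present no difficulty.
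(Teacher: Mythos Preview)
Your proof is correct. The paper itself does not prove this theorem but merely cites an external reference; however, the paper later proves a strictly stronger statement, Lemma~\ref{North-South}, whose proof is a one-liner and which, combined with the compactness of $\overline{X}$, yields Theorem~\ref{axial} immediately (take $\varphi_n=\varphi^n$ and note that $\overline{X}\setminus U$ is a compact subset of $\overline{X}\setminus\{\gamma(-\infty)\}$).

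The difference in approach is worth noting. The paper's argument for Lemma~\ref{North-South} exploits the isometry-invariance of the Alexandrov angle directly: $\angle_{p}\bigl(\varphi^{n}(p),\varphi^{n}(x)\bigr)=\angle_{\varphi^{-n}(p)}(p,x)$, and the right-hand side tends to zero by Lemma~\ref{small viewing angle} since $\varphi^{-n}(p)\to\gamma(-\infty)$ while $p,x$ stay away from $\gamma(-\infty)$; this alone forces $\varphi^{n}(x)\to\gamma(\infty)$. Your route instead establishes a geometric ``boundedness property'' (geodesics joining sequences with distinct limits stay a bounded distance from the basepoint) via the nearest-point projection inequality $\angle_{m_k}(o,\cdot)\ge\pi/2$ combined with Lemma~\ref{small viewing angle}, and then uses CAT($0$) convexity to show explicit uniform convergence of the translated geodesics to the axis. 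This works and is self-contained, but it is considerably more laborious than transporting the angle back via the isometry; the short argument also makes transparent why no special feature of powers of a single axial map is needed --- only that $\varphi_n(p)\to\eta^+$ and $\varphi_n^{-1}(p)\to\eta^-$.
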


And similarly for parabolic isometries we have
\begin{theorem} (see \cite[Proposition 8.4]{MR0336648})
\label{parabolic}
Let $X$ be a proper visibility space and let $\varphi$ be a parabolic isometry of $X$ with the unique fixed point $\eta \in X(\infty)$. Then given any neighborhood $U$ of $\eta$, there exists  $N \in \mathbb{N}$ such that for every $n \geq N$, 
$$\varphi^n(\overline{X} \setminus U) \subset U \text{ and }\varphi^{-n}(\overline{X} \setminus U) \subset U.$$
\end{theorem}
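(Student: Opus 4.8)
The plan is to prove the statement for $\varphi^n$ only; the statement for $\varphi^{-n}$ then follows by applying it to $\varphi^{-1}$, which is again parabolic with $\mathrm{Fix}_{\varphi^{-1}}=\mathrm{Fix}_\varphi=\{\eta\}$, and by taking the larger of the two resulting thresholds. Since the truncated cones $T_{x_0}(\eta,r,\varepsilon)$ form a neighbourhood basis at $\eta$, I may assume $T_{x_0}(\eta,r_0,\varepsilon_0)\subseteq U$ for a fixed reference point $x_0\in X$ and some $r_0,\varepsilon_0>0$. The proof has two stages: first, convergence of $\varphi$-orbits to $\eta$; second, a bootstrapping argument that upgrades this to the uniform statement via the visibility estimates.

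\smallskip

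\noindent\textbf{Step 1 (orbits converge to $\eta$).} I would first show $\varphi^{n}x\to\eta$ in $\overline X$ as $n\to\pm\infty$ for every $x\in X$; since $d(\varphi^n x,\varphi^n x_0)=d(x,x_0)$ it suffices to treat $x=x_0$. The orbit $\{\varphi^n x_0:n\in\mathbb Z\}$ is unbounded: otherwise its closure is compact and $\varphi$ fixes the circumcentre of that closure, so $\varphi$ is elliptic, contradicting Theorem~\ref{fixed point}. Next, any accumulation point $\xi\in X(\infty)$ of the orbit is fixed by $\varphi$: if $\varphi^{n_k}x_0\to\xi$, then $d(\varphi^{n_k}x_0,\varphi^{n_k+1}x_0)=d(x_0,\varphi x_0)$ is bounded, so $\varphi^{n_k+1}x_0\to\xi$ as well, and applying the homeomorphic extension of $\varphi$ to $\overline X$ gives $\varphi(\xi)=\xi$, whence $\xi=\eta$ by Theorem~\ref{fixed point}. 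It remains to exclude accumulation of the orbit inside $X$: if $\varphi^{n_k}x_0\to z\in X$ with $n_k\to\infty$, a suitable choice of the $n_k$ either produces an elliptic power of $\varphi$ (if the gaps $n_{k+1}-n_k$ stay bounded, some $\varphi^m$ fixes $x_0$, forcing a finite orbit) or yields $m_k\to\infty$ with $\varphi^{m_k}x_0\to x_0$; the latter is incompatible with the visibility axiom, using that $|\varphi|=0$ (\cite{Wu18}) forces $\varphi$ to preserve every horosphere centred at $\eta$ and that $d_\varphi\to0$ along the ray $\gamma_{x_0,\eta}$, while the orbit is unbounded. This is the classical ingredient underlying the theorem (cf.\ \cite{MR0336648}), and it is where essentially all the difficulty lies.

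\smallskip

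\noindent\textbf{Step 2 (from orbits to everything).} Granting Step 1, suppose for contradiction that the conclusion fails for some open $U\ni\eta$: there exist $n_k\to\infty$ and $z_k\in\overline X\setminus U$ with $\varphi^{n_k}z_k\in\overline X\setminus U$. Choose an open $V$ with $\eta\in V$, $\overline V\subseteq U$ and $x_0\notin V$. Since $\varphi^{n_k}x_0\to\eta$, Lemma~\ref{small viewing angle} gives $\sup_{a,b\in\overline X\setminus V}\angle_{\varphi^{n_k}x_0}(a,b)\to0$; as $x_0$, $z_k$ and $\varphi^{n_k}z_k$ all lie in $\overline X\setminus V$, we get $\angle_{\varphi^{n_k}x_0}(x_0,\varphi^{n_k}z_k)\to0$. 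Transporting this angle by the isometry $\varphi^{-n_k}$ (which fixes $\eta$ and maps $\varphi^{n_k}x_0\mapsto x_0$, $\varphi^{n_k}z_k\mapsto z_k$) yields $\angle_{x_0}(\varphi^{-n_k}x_0,z_k)\to0$; combined with $\varphi^{-n_k}x_0\to\eta$ (so $\angle_{x_0}(\varphi^{-n_k}x_0,\eta)\to0$) and the triangle inequality for Alexandrov angles this gives $\angle_{x_0}(z_k,\eta)\to0$. By CAT($0$) comparison, $d\big(\gamma_{x_0,z_k}(r_0),\gamma_{x_0,\eta}(r_0)\big)\le 2r_0\sin\!\big(\tfrac12\angle_{x_0}(z_k,\eta)\big)$ whenever $d(x_0,z_k)>r_0$, so for large $k$ the inequality $d(x_0,z_k)>r_0$ would place $z_k$ inside $T_{x_0}(\eta,r_0,\varepsilon_0)\subseteq U$, a contradiction; hence $d(x_0,z_k)\le r_0$ for all large $k$. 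Passing to a subsequence, $z_k\to z_\infty\in\overline{B_{x_0}(r_0)}\subseteq X$. By Step 1, $\varphi^n z_\infty\to\eta$, and since $d(\varphi^{n_k}z_k,\varphi^{n_k}z_\infty)=d(z_k,z_\infty)\to0$ we conclude $\varphi^{n_k}z_k\to\eta$, contradicting $\varphi^{n_k}z_k\in\overline X\setminus U$.

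\smallskip

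\noindent In short, Step 2 is a mechanical consequence of Step 1 together with the visibility estimate of Lemma~\ref{small viewing angle}, and the reductions to $\varphi^{-1}$ and to a basic neighbourhood are routine; the real obstacle is Step 1, namely that the $\varphi$-orbit of a point leaves every compact subset of $X$ (equivalently $d(x,\varphi^n x)\to\infty$ as $n\to\pm\infty$).
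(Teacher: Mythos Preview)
The paper does not prove this statement; it simply quotes it from \cite[Proposition~8.4]{MR0336648}. So there is no ``paper's proof'' to compare against, and I can only assess your argument on its own terms. Your overall two-step structure is exactly the classical one, and your Step~2 is essentially Lemma~\ref{North-South} of the present paper specialised to $\varphi_n=\varphi^n$, $\eta^+=\eta^-=\eta$; once Step~1 is in hand you could just quote that lemma together with the compactness of $\overline X\setminus U$.

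That said, Step~2 as you have written it contains a genuine error. The CAT($0$) comparison you invoke,
\[
d\big(\gamma_{x_0,z_k}(r_0),\gamma_{x_0,\eta}(r_0)\big)\ \le\ 2r_0\sin\!\big(\tfrac12\angle_{x_0}(z_k,\eta)\big),
\]
points the wrong way: in a CAT($0$) space the Alexandrov angle is the \emph{infimum} of the comparison angles (the function $t\mapsto d(\gamma_1(t),\gamma_2(t))/t$ is nondecreasing), so one only gets $d(\gamma_1(r),\gamma_2(r))\ge 2r\sin(\angle/2)$. In particular, $\angle_{x_0}(z_k,\eta)\to0$ does \emph{not} force $\gamma_{x_0,z_k}(r_0)$ close to $\gamma_{x_0,\eta}(r_0)$, and hence does not place $z_k$ in the truncated cone. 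A tree already illustrates this: two distinct ends whose rays from $x_0$ share an initial edge have Alexandrov angle $0$ at $x_0$. Thus your argument does not handle the case where the $z_k$ accumulate at a boundary point $z_\infty\in X(\infty)\setminus\{\eta\}$; only the bounded case $z_\infty\in X$ (your last three lines) is correctly treated. A repair requires using visibility more sharply, for instance by controlling comparison angles or by passing through the horocycle topology (horoballs at $\eta$ are cofinal among cone neighbourhoods of $\eta$ in a visibility space). As for Step~1, you yourself flag that the crux---ruling out accumulation of $\{\varphi^n x_0\}$ inside $X$---is not actually carried out; the sentence invoking $|\varphi|=0$, preserved horospheres and $d_\varphi\to0$ along $\gamma_{x_0,\eta}$ is suggestive but is not a proof, and this is precisely where the visibility hypothesis does real work in \cite{MR0336648}.
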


\subsection*{Groups of isometries of visibility manifolds} \hfill\\

We now investigate isometric group actions on proper visibility spaces.

As remarked in the preceding subsection, every isometry of a proper visibility space $X$ fixes at least one point in the compactification $\overline{X}=X \cup X(\infty)$.  Let $\Gamma$ be a group of isometries of  $X$. For a point $x \in \overline{X}$, we denote by $\Gamma_x$ the \emph{stability subgroup} $$\Gamma_x=\{\varphi \in \Gamma:\ \varphi(x)=x \}.$$

A group $\Gamma_0$ of isometries of $X$ is called a \emph{stability group} if it fixes a point $x \in \overline{X}$, i.e., $$\Gamma_0=(\Gamma_0)_x.$$

As the following results show, the case when the group is almost nilpotent is of particular interest in relating the geometry of a visibility manifold to an isometric group action, and is closely connected to the case that the whole group has a fixed point in $X(\infty)$.

\begin{lemma} (see \cite[Lemma 3.1b]{MR577132})
\label{stability nilpotent}
Let $X$ be a proper visibility space, and let $\Gamma_0$ be an almost nilpotent group acting on $X$ freely and properly discontinuous by isometries. Then $\Gamma_0$ has a global fixed point in $X(\infty)$. In particular the elements of $\Gamma_0$ are either all axial or all parabolic.
\end{lemma}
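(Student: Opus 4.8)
The plan is to split into the two cases coming from Theorem \ref{fixed point} for individual isometries and then upgrade to a statement about the whole group $\Gamma_0$. Since $\Gamma_0$ acts freely, it contains no elliptic elements, so every nontrivial $\varphi\in\Gamma_0$ is either axial or parabolic. The first step is to show that $\Gamma_0$ cannot simultaneously contain an axial element and a parabolic element, and more strongly that all nontrivial elements share a common fixed point in $X(\infty)$. For this I would exploit the contracting–expanding dynamics recorded in Theorem \ref{axial} and Theorem \ref{parabolic}. If $\varphi$ is axial with fixed points $\{\eta,\xi\}=\mathrm{Fix}_\varphi\subset X(\infty)$ and $\psi\in\Gamma_0$ does not fix the pair $\{\eta,\xi\}$, then for suitable disjoint neighborhoods one can run a ping-pong argument: high powers of $\varphi$ and of a conjugate $\psi\varphi\psi^{-1}$ (whose fixed-point pair is $\psi\{\eta,\xi\}\neq\{\eta,\xi\}$) generate a nonabelian free subgroup of $\Gamma_0$, contradicting that $\Gamma_0$ is almost nilpotent (an almost nilpotent group, being virtually solvable, contains no nonabelian free subgroup). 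Hence every element of $\Gamma_0$ preserves $\{\eta,\xi\}$; passing to the index-$\le 2$ subgroup that fixes $\eta$ (and $\xi$) pointwise and using freeness plus discreteness one concludes, after a short argument, that in fact $\Gamma_0$ itself fixes both $\eta$ and $\xi$ — so $\Gamma_0$ has a global fixed point in $X(\infty)$ and, looking at each element's type via Theorem \ref{fixed point}, every nontrivial element is axial (it has two boundary fixed points and translates a geodesic between them).

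In the remaining case $\Gamma_0$ contains no axial element, so every nontrivial element is parabolic and, by Theorem \ref{fixed point}(c), has a unique fixed point in $X(\infty)$. Here the key step is to show these fixed points coincide. Suppose $\varphi,\psi\in\Gamma_0$ are nontrivial parabolics with $\mathrm{Fix}_\varphi=\{\eta\}$ and $\mathrm{Fix}_\psi=\{\xi\}$ and $\eta\neq\xi$. Choose disjoint neighborhoods $U\ni\eta$, $V\ni\xi$ in $\overline X$. By Theorem \ref{parabolic}, for $n$ large $\varphi^{\pm n}(\overline X\setminus U)\subset U$ and $\psi^{\pm n}(\overline X\setminus V)\subset V$; in particular $\varphi^{\pm n}(V)\subset U$ and $\psi^{\pm n}(U)\subset V$. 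This is exactly the configuration of the ping-pong lemma, so $\langle \varphi^n,\psi^n\rangle$ is free of rank $2$, again contradicting almost nilpotency. Therefore all nontrivial parabolics in $\Gamma_0$ share the same fixed point $\eta$, which is then a global fixed point for $\Gamma_0$, and every nontrivial element is parabolic. Combining the two cases, $\Gamma_0$ has a global fixed point in $X(\infty)$ and its nontrivial elements are either all axial or all parabolic.

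\textbf{Main obstacle.} The routine parts are the two ping-pong arguments, which are standard once Theorems \ref{axial} and \ref{parabolic} are in hand. The delicate point is the axial case: having shown every element of $\Gamma_0$ \emph{preserves} the pair $\{\eta,\xi\}$, one must rule out that some element swaps $\eta$ and $\xi$ for \emph{all} of $\Gamma_0$ at once, i.e., promote ``stabilizes the pair'' to ``fixes each point''. The index-$2$ subgroup $\Gamma_0'$ fixing $\eta$ and $\xi$ is normal and nontrivial (it contains $\varphi$), and any $g\in\Gamma_0\setminus\Gamma_0'$ conjugates an axial $h\in\Gamma_0'$ to $h^{-1}$ up to the common axis; using that $\Gamma_0'$ contains two axials with distinct axes would again produce free subgroups, or alternatively one observes that an orientation-reversing isometry of a geodesic line has a fixed point on that line, contradicting freeness. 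Either way the index-$2$ ambiguity is killed, and since this forces $\Gamma_0=\Gamma_0'$ we get the global fixed point; I expect this normal-subgroup/fixed-point bookkeeping, rather than the ping-pong, to be where care is needed.
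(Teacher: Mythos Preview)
The paper does not supply its own proof of this lemma; it simply cites \cite[Lemma 3.1b]{MR577132}. So there is nothing to compare against, and I will just comment on your argument.

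Your overall strategy is sound and is in the spirit of how the paper itself handles related statements later (see Lemmas~\ref{fg-p}, \ref{fg-h} and the proof of Proposition~\ref{free subgroup}): use the north--south dynamics of Theorems~\ref{axial} and~\ref{parabolic} together with ping-pong to produce a free subgroup whenever the putative common fixed point fails to exist, contradicting almost nilpotency. The parabolic case is clean as written.

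Two points in the axial case deserve more care. First, for the ping-pong you need $\mathrm{Fix}_\varphi$ and $\mathrm{Fix}_{\psi\varphi\psi^{-1}}=\psi\{\eta,\xi\}$ to be \emph{disjoint}, not merely unequal; if they overlap in one point you cannot choose disjoint neighborhoods. This is exactly the subtlety addressed in Lemma~\ref{fg-h}, and the same case analysis (using Lemma~\ref{stability}) works here. Second, your suggested resolution of the index-$2$ ambiguity---``an orientation-reversing isometry of a geodesic line has a fixed point on that line''---is not quite right as stated: a swapper $g$ need only permute the family of geodesics joining $\eta$ and $\xi$, not preserve any single one. The clean fix is purely at infinity: if $g$ swaps $\eta$ and $\xi$ then $g^2$ fixes both. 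If $g^2=\mathrm{id}$, then $g$ is torsion, contradicting freeness. Otherwise $g^2$ is nontrivial with two fixed points at infinity, hence axial by Theorem~\ref{fixed point}; then $g$ itself is axial (a parabolic has $|g|=0$ and all powers parabolic in a visibility space), and an axial isometry fixes, rather than swaps, the endpoints of its axis---contradiction. With these two patches your argument goes through.
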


\begin{lemma} (see \cite[Proposition 6.8, Proposition 6.9, Theorem 6.11]{MR0336648})
\label{stability}
Let $\Gamma$ be a group of isometries acting properly discontinuously on a proper visibility space $X$. If $\Gamma$ contains an axial element $\varphi$ which translate an axis with endpoints $\eta,\xi \in X(\infty)$, then every element of the stability group $\Gamma_\eta$ fixes $\xi$ as well. Moreover, $\Gamma_\eta$ is almost infinite cyclic.
\end{lemma}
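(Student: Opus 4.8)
The statement to prove is Lemma \ref{stability}: if $\Gamma$ acts properly discontinuously on a proper visibility space $X$ and contains an axial element $\varphi$ with axis endpoints $\eta, \xi \in X(\infty)$, then every element of $\Gamma_\eta$ also fixes $\xi$, and moreover $\Gamma_\eta$ is almost infinite cyclic.

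\bigskip

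The plan is as follows. First I would show that every element of $\Gamma_\eta$ fixes $\xi$. Take $g \in \Gamma_\eta$, so $g(\eta) = \eta$. Suppose toward a contradiction that $g(\xi) = \zeta \neq \xi$. Consider the conjugate $g \varphi^n g^{-1}$, which is axial with axis endpoints $g(\eta) = \eta$ and $g(\xi) = \zeta$. Now I would exploit the contracting–expanding behavior of axial isometries (Theorem \ref{axial}): choosing disjoint neighborhoods of $\eta$ and $\xi$, high powers $\varphi^n$ push the complement of the neighborhood of $\eta$ into a tiny neighborhood of $\xi$; conjugating, high powers of $g\varphi^n g^{-1}$ push things near $\zeta$. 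The key point is to play these two dynamics against each other: the products $\varphi^{-n} g \varphi^n$ (or a suitable commutator-type combination) would, for large $n$, move a fixed basepoint $x_0$ out toward infinity in a way incompatible with proper discontinuity — because the orbit of a ball under these elements would accumulate, violating discreteness. More concretely, if $g(\xi)\neq\xi$ then $\varphi^{-n} g \varphi^n$ sends a neighborhood of $\xi$ near $g^{-1}(\xi)$ then back near $\xi$, and for a point on the axis this produces infinitely many group elements whose displacement of a fixed point stays bounded, contradicting proper discontinuity (which forces only finitely many $\gamma$ with $d(x_0, \gamma x_0) \le C$). This is the standard North–South dynamics argument showing the fixed-point pair $\{\eta,\xi\}$ of an axial element is canonical under any commuting-at-one-endpoint element.

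\bigskip

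Second, having established $\Gamma_\eta \subseteq \Gamma_\eta \cap \Gamma_\xi$, I would prove $\Gamma_\eta$ is almost infinite cyclic. Since every element of $\Gamma_\eta$ fixes both $\eta$ and $\xi$, by Theorem \ref{fixed point} every element is either axial (translating a geodesic from $\eta$ to $\xi$) or elliptic — no element can be parabolic, since a parabolic has a unique fixed point. Let $A \subseteq \Gamma_\eta$ be the set of axial elements together with the identity. I would consider the set of geodesics joining $\eta$ to $\xi$; in a visibility CAT(0) space these form a convex set isometric to $Y \times \mathbb{R}$ (the "bridge" between $\eta$ and $\xi$), and $\Gamma_\eta$ acts on it. There is a translation-length homomorphism (up to sign) $\tau: \Gamma_\eta \to \mathbb{R}$ recording the signed translation along the $\mathbb{R}$ factor; its kernel consists of elliptic elements. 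Proper discontinuity forces the image of $\tau$ to be a discrete subgroup of $\mathbb{R}$, hence trivial or infinite cyclic; and the kernel, being elliptic isometries acting properly discontinuously with a common fixed point at infinity, must be finite (here one uses that a bounded group of isometries acting properly discontinuously is finite, together with the fact that these elliptic elements have bounded orbits — combining Theorem \ref{fixed point}(a) with discreteness; some care is needed because the common elliptic fixed point in $X$ need not exist, but the kernel stabilizes the axis of any axial element and acts on it with bounded orbits). Therefore $\Gamma_\eta$ is an extension of a finite group by a (trivial or) infinite cyclic group, i.e., almost infinite cyclic; and since it contains $\varphi$ it is in fact virtually $\mathbb{Z}$, not finite.

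\bigskip

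The main obstacle I anticipate is the second part: carefully controlling the elliptic elements in $\Gamma_\eta$ without a common fixed point in $X$ itself. In the Riemannian visibility manifold setting (the original reference \cite{MR0336648}) one has more structure, but for a general proper visibility CAT(0) space one must argue that the "elliptic kernel" is finite purely from proper discontinuity. The cleanest route is probably: pick an axial $\varphi$ with axis $\gamma$; every elliptic $g \in \Gamma_\eta$ permutes the set of geodesics asymptotic to both $\eta$ and $\xi$, and since $g$ fixes $\eta,\xi$ and has bounded orbits, $g$ preserves a sublevel/minset structure and moves $\gamma$ a bounded amount, so $\{g(\gamma(0)) : g \text{ elliptic in } \Gamma_\eta\}$ is bounded; then proper discontinuity gives finiteness of this kernel. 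The first part — canonicity of the axial fixed-point pair — is routine North–South dynamics and I expect it to go through smoothly using Theorem \ref{axial}.
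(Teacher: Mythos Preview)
Your strategy is correct and runs parallel to the paper's short proof. For the first assertion the paper simply cites \cite[Proposition~6.8]{MR0336648}, whereas you supply the standard conjugation argument: with $\psi_n=\varphi^{n}g\varphi^{-n}$ and $\gamma$ an axis of $\varphi$, the rays $\gamma|_{(-\infty,0]}$ and $g\gamma|_{(-\infty,0]}$ are both asymptotic to $\eta$, so $d(\gamma(0),\psi_n\gamma(0))=d\bigl(\gamma(-n|\varphi|),\,g\gamma(-n|\varphi|)\bigr)$ stays bounded; hence either infinitely many $\psi_n$ are distinct (contradicting proper discontinuity) or $g$ commutes with some $\varphi^k$ and therefore preserves $\{\eta,\xi\}$, forcing $g(\xi)=\xi$. (Your write-up briefly says the basepoint is pushed ``out toward infinity'' before correcting to ``displacement stays bounded''; the latter is the point.) For the second assertion your translation-length homomorphism with finite elliptic kernel is exactly what the paper does, only phrased differently: the paper observes that each elliptic element of $\Gamma_\eta$ fixes pointwise some geodesic joining $\eta$ to $\xi$, that visibility uniformly bounds the Hausdorff distance between any two such geodesics, and then invokes proper discontinuity to conclude there are only finitely many elliptics; the infinite cyclic quotient is \cite[Theorem~6.11]{MR0336648}. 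Your anticipated obstacle --- elliptics in the kernel without a common fixed point in $X$ --- is handled just as you guess and as the paper implicitly uses: project a fixed point of $g$ onto the (transversally bounded, by visibility) parallel set of geodesics from $\eta$ to $\xi$ to see that $g$ fixes a full line there.
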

\begin{proof}
The first assertion is proved in \cite[Proposition 6.8]{MR0336648}. As a consequence, every $\psi \in \Gamma_\eta$ preserves the set of geodesics with endpoints $\eta$ and $\xi$. In particular, an elliptic element in $\Gamma_\eta$ fixes such a geodesic pointwise.  Since $X$ is a  visibility space, the Hausdorff distance between any two parallel geodesics joining $\eta$ and $\xi$ is uniformly bounded. Combined with the proper discontinuity, it follows  that $\Gamma_\eta$ contains only finitely many elliptic elements. By \cite[Theorem 6.11]{MR0336648}, a finite index subgroup of $\Gamma_\eta$ is infinite cyclic.
\end{proof}

We have the following analogue of the Cartan fixed point theorem. 
\begin{lemma}
\label{stability finite}
\label{Cartan}
Let $X$ be a proper CAT($0$) space, and let $\Gamma_0$ be a group acting on $X$ by isometries. Then $\Gamma$ fixes a point $x \in X$ if and only if it has finite orbit.
\end{lemma}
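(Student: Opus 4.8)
The plan is to prove both implications. For the easy direction, if $\Gamma_0$ fixes a point $x \in X$, then the orbit $\Gamma_0 \cdot x = \{x\}$ is trivially finite, so there is nothing to prove. The substance is the converse: if $\Gamma_0$ has a finite orbit, then it fixes a point in $X$. So suppose $\Gamma_0 \cdot x_0$ is a finite set for some $x_0 \in X$; write it as $F = \{x_0, x_1, \dots, x_n\}$. Since $X$ is a proper CAT($0$) space, bounded subsets have well-defined circumcenters: the function $y \mapsto \max_{1 \le i \le n} d(y, x_i)^2$ is strictly convex (by the CAT($0$) inequality quoted at the start of the Preliminaries, each $d(\cdot, x_i)^2$ is strictly convex along geodesics, and the max of strictly convex functions is strictly convex) and proper, hence attains its infimum at a unique point $c \in X$, the circumcenter of $F$.

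Next I would show that $c$ is fixed by $\Gamma_0$. Every $\varphi \in \Gamma_0$ permutes the finite set $F$, because $F$ is an orbit: $\varphi(x_i) = \varphi\psi_i(x_0)$ for some $\psi_i \in \Gamma_0$, and $\varphi \psi_i \in \Gamma_0$, so $\varphi(x_i) \in F$; as $\varphi$ is injective it is a bijection of $F$. Since $\varphi$ is an isometry, $\max_i d(\varphi(c), x_i)^2 = \max_i d(c, \varphi^{-1}(x_i))^2 = \max_i d(c, x_i)^2$, where the last equality uses that $\varphi^{-1}$ permutes $F$. Hence $\varphi(c)$ also minimizes the circumradius function; by uniqueness of the circumcenter, $\varphi(c) = c$. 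Therefore $c$ is a global fixed point of $\Gamma_0$ in $X$.

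I do not anticipate a serious obstacle here; this is the classical Bruhat–Tits / Cartan fixed point argument, and every ingredient (strict convexity of squared distance, properness giving existence of minimizers, uniqueness of the circumcenter) is either standard in CAT($0$) geometry or an immediate consequence of the CAT($0$) inequality stated in the paper. The only point requiring a word of care is the existence and uniqueness of the circumcenter of a bounded set in a proper CAT($0$) space: existence follows because $X$ is proper so the sublevel sets of the proper continuous function $y \mapsto \max_i d(y,x_i)$ are compact, and uniqueness follows from strict convexity exactly as in the standard proof that a CAT($0$) space is uniquely geodesic. One could alternatively invoke \cite[II.2.7]{MR1744486} directly; I would state it in the compact form needed and give the one-line convexity justification.
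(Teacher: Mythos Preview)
The paper does not supply a proof of this lemma; it is stated without argument as the classical Cartan (Bruhat--Tits) fixed-point theorem. Your circumcenter argument is precisely the standard proof and is correct as written. One minor remark: the paper's wording ``finite orbit'' is almost certainly shorthand for (or a slip for) ``bounded orbit'', which is the hypothesis in the classical theorem and is how the result is used elsewhere in the paper (e.g.\ in the proof of Theorem~\ref{fpnil} when $X_{\eta_1}$ is bounded); your argument extends verbatim to that generality, since the circumcenter of any nonempty bounded subset of a complete CAT($0$) space exists and is unique by the same CAT($0$) inequality you invoke.
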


In view of  Lemma \ref{stability nilpotent}, Lemma \ref{stability} and Lemma \ref{stability finite}, to give a complete characterization of nilpotent group actions  on a visibility space in terms of the limit set, the only remaining case is when the group consists of elliptic and parabolic elements. We show that, under the additional assumption of the bounded packing property, such a group is indeed almost nilpotent if its limit set at infinity has no more than two points.

\subsection*{Transverse spaces} \hfill\\

The study of the transverse space associated with a geometric boundary point of a CAT($0$) space was initiated by Caprace and Monod. We refer to \cite{MR2495801} and \cite{CM13} for more general results and the proofs.

Let $X$ be an unbounded CAT($0$) space and $\xi \in X(\infty)$ be a boundary point. Fix a horosphere $H$ centered at $\xi$. Define $$d_\xi(x,y)=d(\gamma_{x, \xi}, \gamma_{y, \xi}) \text{ for } \ x,y \in H.$$ It is easy to see that $(H,d_\xi)$ is a pseudo-metric space. Its metric completion, denoted by $(X_\xi,d_\xi)$, is called the \emph{transverse space} of $\xi$. Associate with the transverse space there is a natural projection map $\pi_{\eta}:X \to X_{\eta}$.

For example, if $X$ is a complete simply connected manifold of uniformly negative sectional curvature, then $d_\xi(x,y)=d_\xi(\pi_\xi(x),\pi_\xi(y))\equiv 0 \text{ for all} \ x,y \in H$. That is, the space $(X_\xi,d_\xi)$ is a single point.

We will use the following property of transverse spaces, which is a reformulation of Proposition 3.3 in \cite{CM13}.
\begin{theorem}
\label{Transverse proper}
The space $X_\xi$ is a complete CAT($0$) space. If $X$ is of bounded geometry, then so is $X_\xi$; in particular, $X_\xi$ is proper.
\end{theorem}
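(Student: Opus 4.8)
The plan is to establish the three assertions — completeness, the CAT(0) inequality, and bounded geometry — separately. Completeness is immediate, since $(X_\xi,d_\xi)$ is by definition the metric completion of the pseudo-metric space $(H,d_\xi)$. For the CAT(0) property, fix the horosphere $H$ used to define $X_\xi$, let $u_0$ be the common value of the Busemann function on $H$, and for $t\ge 0$ write $\rho_t\colon X\to X$, $\rho_t(x)=\gamma_{x,\xi}(t)$, for the geodesic flow toward $\xi$. Two facts are used throughout: (i) on each horosphere centered at $\xi$, $\rho_t$ coincides with the nearest-point projection onto a convex horoball, hence is $1$-Lipschitz, and $\rho_{s+t}=\rho_s\circ\rho_t$; (ii) for $x,y\in H$ the function $t\mapsto d(\rho_tx,\rho_ty)$ is non-increasing with $d_\xi(\pi_\xi x,\pi_\xi y)=\lim_{t\to\infty}d(\rho_tx,\rho_ty)$, so $d(\rho_tx,\rho_ty)\ge d_\xi(\pi_\xi x,\pi_\xi y)$ for every $t\ge 0$. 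Since $X_\xi$ is complete, by the Bruhat--Tits characterization of CAT(0) spaces it suffices to produce, for each $\bar a,\bar b\in X_\xi$, a point $\bar m\in X_\xi$ with $d_\xi^2(\bar c,\bar m)\le\frac12 d_\xi^2(\bar c,\bar a)+\frac12 d_\xi^2(\bar c,\bar b)-\frac14 d_\xi^2(\bar a,\bar b)$ for all $\bar c\in X_\xi$; by density of $\pi_\xi(H)$ and continuity of $d_\xi$ (together with the fact that such midpoints are unique and vary continuously with the endpoints) it is enough to do this for $\bar a=\pi_\xi a$, $\bar b=\pi_\xi b$, $\bar c=\pi_\xi c$ with $a,b,c\in H$. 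For each $n$, apply the CAT(0) inequality of $X$ to $\rho_nc$ and the midpoint $m_n$ of $[\rho_n a,\rho_n b]$ and let $n\to\infty$: by (ii) the right-hand side tends to the desired expression, and, using that $\rho_n a,\rho_n b$ lie on a common horosphere, fact (i), and the near-uniqueness of almost-midpoints in CAT(0) spaces, one checks that $\pi_\xi(m_n)$ is a Cauchy sequence in $X_\xi$, whose limit $\bar m$ satisfies $d_\xi(\bar c,\bar m)\le\liminf_n d(\rho_n c,m_n)$. This is the limiting argument of \cite[Proposition 3.3]{CM13}; it shows $X_\xi$ is CAT(0), in particular geodesic.

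For bounded geometry, assume $X$ has bounded geometry and fix $r>\delta>0$; by definition there is $N=N(r,\delta)\in\mathbb{N}$ bounding the cardinality of any $\delta$-separated subset of a radius-$r$ ball in $X$ (take $N=n_{r,\delta/2}$). Let $\bar x_0\in X_\xi$ and let $\bar y_1,\dots,\bar y_k\in X_\xi$ satisfy $d_\xi(\bar x_0,\bar y_i)<r$ and $d_\xi(\bar y_i,\bar y_j)>\delta$ for $i\ne j$. Using density of $\pi_\xi(H)$, pick $x_0,y_1,\dots,y_k\in H$ so that the corresponding strict inequalities still hold with $\pi_\xi x_0,\pi_\xi y_i$ in place of $\bar x_0,\bar y_i$. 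By (ii), for each $i$ the distance $d(\rho_t x_0,\rho_t y_i)$ is non-increasing in $t$ with limit $d_\xi(\pi_\xi x_0,\pi_\xi y_i)<r$, so there is $T_i$ with $d(\rho_t x_0,\rho_t y_i)<r$ for all $t\ge T_i$; set $T=\max_i T_i$. Then $\rho_T(y_1),\dots,\rho_T(y_k)$ all lie in $B_X(\rho_T x_0,r)$, while for $i\ne j$, again by (ii), $d(\rho_T y_i,\rho_T y_j)\ge d_\xi(\pi_\xi y_i,\pi_\xi y_j)>\delta$. Hence $\{\rho_T y_i\}$ is a $\delta$-separated subset of a radius-$r$ ball in $X$, so $k\le N$. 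Therefore $X_\xi$ has bounded geometry with the same packing function as $X$; by Lemma \ref{packing geometry} it satisfies the bounded packing property, and being complete with bounded geometry it is proper.

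The only delicate step is the Cauchyness of $\{\pi_\xi(m_n)\}$ in the CAT(0) part: one must show that the Busemann defect of the midpoints $m_n$ — how far $m_n$ dips below the horosphere containing $\rho_na,\rho_nb$ — stabilizes as $n\to\infty$, which is where the CAT(0) geometry of $X$ and completeness of $X_\xi$ genuinely interact. Everything else is routine bookkeeping with the $1$-Lipschitz flow $\rho_t$, and the bounded-geometry transfer is entirely self-contained.
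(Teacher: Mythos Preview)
The paper does not actually prove this theorem: it is stated as ``a reformulation of Proposition 3.3 in \cite{CM13}'' and no argument is given. So there is no ``paper's own proof'' to compare against; what you have written is essentially a sketch of the Caprace--Monod proof that the paper is quoting.

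Your bounded-geometry transfer is clean and self-contained: the key inequality $d(\rho_T y_i,\rho_T y_j)\ge d_\xi(\pi_\xi y_i,\pi_\xi y_j)$ lets you pull a $\delta$-separated set in $X_\xi$ back to a $\delta$-separated set in a ball of $X$, and the choice $N=n_{r,\delta/2}$ matches the paper's definition of bounded geometry. That part stands on its own.

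For the CAT(0) part you are following the right outline, and you correctly identify the one genuinely nontrivial step: showing that $\{\pi_\xi(m_n)\}$ is Cauchy, or equivalently that the midpoints $m_n$ of $[\rho_n a,\rho_n b]$ stabilise under $\pi_\xi$. Your phrase ``one checks'' hides real work here. The point is that the semigroup property $\rho_s\circ\rho_t=\rho_{s+t}$ and the $1$-Lipschitz property of $\rho_s$ give $d(\rho_s(\rho_n a),\rho_{n+s}a)=0$ (both equal $\rho_{n+s}a$), hence $\rho_s$ maps the segment $[\rho_n a,\rho_n b]$ within distance $\tfrac12\big(d(\rho_n a,\rho_n b)-d(\rho_{n+s}a,\rho_{n+s}b)\big)$ of the segment $[\rho_{n+s}a,\rho_{n+s}b]$ by the CAT(0) midpoint estimate; since $d(\rho_n a,\rho_n b)$ is Cauchy in $n$, this forces $d(\rho_s m_n,m_{n+s})\to 0$, which is exactly the Cauchyness of $\pi_\xi(m_n)$. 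Without spelling this out, the sketch is incomplete at the very point you yourself flag as delicate. Also note that $m_n$ need not lie on the horosphere through $\rho_n a,\rho_n b$ (it lies in the horoball, by convexity of the Busemann function), so strictly speaking $\pi_\xi(m_n)$ is defined via the extension of $\pi_\xi$ to all of $X$, not just to $H$; this is harmless but should be said.
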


Since the distance function on a CAT(0) space is convex, it is not hard to see that
\begin{equation*}\label{1-lip}
d_\xi(x,y)\leq d(x,y) \text{ for all } x,y \in H.
\end{equation*}

In our proof of Theorem \ref{Tits alternative}, we need a concept associated with the iteration of transverse spaces called the \emph{depth}. The transverse space $M_{\eta_1}$ of $\eta_1 \in M(\infty)$ is a complete CAT($0$) space of bounded geometry. The stability group $\Gamma_0$ fixing $\eta_1$ and all horospheres centered at $\eta_1$ induces a group action on $M_{\eta_1}$. If $M_{\eta_1}$ is unbounded, then for any $\eta_2 \in M_{\eta_1}(\infty)$ we can define the transverse space $M_{\eta_1,\eta_2}$ of $\eta_2$. By iterating this process we get a sequence of proper CAT($0$) spaces $M_{\eta_1}, M_{\eta_1,\eta_2},\cdots$ such that $\eta_{i+1} \in M_{\eta_1,\ldots,\eta_i}(\infty)$. 

\begin{definition}
$\max \{k: \eta_{i+1} \in M_{\eta_1,\ldots,\eta_i}(\infty), \; 1 \leq i \leq k-1\}$ is called the \emph{depth} of $\widetilde{M}$.
\end{definition} 

The following result for the depth of CAT($0$) spaces  is due to Caprace and Monod.
\begin{theorem} (see \cite[Proposition 3.3]{MR3072802})
    \label{finite depth}
    If a CAT($0$) space $X$ is of bounded geometry, then it has finite depth, i.e. the iteration of transverse space terminates after finitely many steps. 
    \end{theorem}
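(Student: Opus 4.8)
The plan is to run an induction on a dimension-type invariant of CAT(0) spaces. Concretely, one wants a quantity $\mathbf{d}(\cdot)$, valued in $\{0,1,2,\dots\}\cup\{\infty\}$, such that: (i) $\mathbf{d}(Y)<\infty$ whenever $Y$ is a CAT(0) space of bounded geometry; (ii) $\mathbf{d}(Y)=0$ if and only if $Y$ is bounded; and (iii) $\mathbf{d}(Y_\xi)\le \mathbf{d}(Y)-1$ whenever $Y$ is an \emph{unbounded} CAT(0) space of bounded geometry and $\xi\in Y(\infty)$. Once such a $\mathbf{d}$ is in hand the theorem follows formally: by Theorem~\ref{Transverse proper} each iterated transverse space $X_{\eta_1,\dots,\eta_i}$ is again a CAT(0) space of bounded geometry, so $\mathbf{d}$ is finite along the whole iteration; a chain $\eta_1\in X(\infty)$, $\eta_2\in X_{\eta_1}(\infty)$, $\dots$, $\eta_k\in X_{\eta_1,\dots,\eta_{k-1}}(\infty)$ forces each of $X, X_{\eta_1},\dots, X_{\eta_1,\dots,\eta_{k-1}}$ to be unbounded, so by $(k-1)$ applications of (iii) we get $\mathbf{d}(X_{\eta_1,\dots,\eta_{k-1}})\le \mathbf{d}(X)-(k-1)$, while by (ii) this last space, being unbounded, has $\mathbf{d}\ge 1$; hence $k\le\mathbf{d}(X)$ and the depth of $X$ is at most $\mathbf{d}(X)<\infty$.

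For $\mathbf{d}$ I would use a notion of asymptotic dimension, for example the telescopic dimension in the sense of Caprace--Lytchak, i.e. the supremum of the geometric dimensions of the ultralimit rescalings (asymptotic cones) of the space. Properties (i) and (ii) are then the ``soft'' parts: a bounded space has a point as its only asymptotic cone, so $\mathbf{d}=0$, whereas an unbounded one contains arbitrarily long geodesic segments and so has a cone of dimension at least $1$; and finiteness of $\mathbf{d}$ for bounded geometry is extracted from the packing function furnished by the bounded packing property, along the lines of Lemma~\ref{packing}, which prevents the rescaled ultralimits from acquiring arbitrarily high-dimensional flat pieces.

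The heart of the matter is property (iii). Geometrically, the transverse space $Y_\xi$ is the ``leaf space'' of the foliation of a horoball of $Y$ by the geodesic rays aimed at $\xi$: in a horosphere $H$ centered at $\xi$ one collapses each fibre $\{d_\xi=0\}$ (a class of mutually asymptotic $\xi$-rays) and then completes. Passing to $Y_\xi$ thus divides out the one-dimensional ``Busemann direction'', and the content of (iii) is that this direction is genuinely worth one in dimension: flowing a piece of $H$ toward $\xi$ is injective, because the Busemann function is strictly monotone along every $\xi$-ray so that rays through distinct points of $H$ stay distinct, and carrying this through to the asymptotic cones should exhibit a line factor over an asymptotic cone of $Y_\xi$ inside an asymptotic cone of $Y$, giving $\mathbf{d}(Y)\ge\mathbf{d}(Y_\xi)+1$. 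I expect this to be the main obstacle, for the familiar reason that in a general CAT(0) space geodesics cannot be extended: the horoball is \emph{not} the metric product $H\times\mathbb{R}$, the $\xi$-flow is neither onto the horoball nor bi-Lipschitz, and $\pi_\xi$-fibres in $H$ can be $d$-unbounded even though they are $d_\xi$-trivial, so one must keep $d$, $d_\xi$ and the Busemann parameter under simultaneous control while passing to rescaled limits. This bookkeeping is precisely what is done in \cite[Proposition~3.3]{MR3072802}, which one would ultimately invoke.
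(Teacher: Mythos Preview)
The paper does not prove this theorem; it is simply quoted from \cite[Proposition~3.3]{MR3072802} (Caprace--Monod) without argument, as the ``(see \dots)'' tag indicates. Your sketch is essentially the strategy of that reference: the invariant $\mathbf{d}$ you describe is the telescopic dimension of Caprace--Lytchak, and the three properties (i)--(iii) you isolate are exactly what Caprace--Monod establish, with (iii) being the substantive step and (i) coming from the packing bound. So there is nothing to compare against in this paper, and your outline correctly reconstructs the cited proof; you are also right that the crux is showing the dimension strictly drops when passing to $Y_\xi$, and that this is where the work lies.
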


Let $\varphi$ be a parabolic isometry of $X$ fixing $\xi$ and all horospheres centered at $\xi$. Then $\varphi$ induces naturally an (not necessarily parabolic) isometry of $X_\xi$ with the same translation length, which is still denoted by $\varphi$ when it doesn't cause any confusion. It is straightforward to check that if a group of isometries  fixing $\xi$ acts cocompactly on a horosphere centered at $\xi$, then it also acts cocompactly on $X_\xi$.
\\

\section{Infinite torsion groups} \label{torsion} \hfill

Since Theorem \ref{Tits alternative} allows $\Gamma$ to contain elliptic elements, any finitely generated torsion group satisfying the Tits alternative must belong the second category (i.e. be almost nilpotent). Indeed, if it contains a nonabelian free subgroup, then $\varphi \psi$ would have infinite order whenever $\varphi$ and $\psi$ generate a free group, contradicting that every element of a torsion group  has finite order. 

By Lemma \ref{Cartan}, every finite group acting on a CAT($0$) space has a global fixed point. In this section we show that every infinite torsion group acting on a visibility space $X$ must have a global fixed point in $X \cup X(\infty$).

\begin{lemma}
\label{dual}
Let $X$ be a proper visibility space, and let $\Gamma_0$ a torsion group acting on $X$ by isometries. Assume that the orbit of $\Gamma_0$ is unbounded, and let $\eta \in X(\infty)$ be a limit point of $\Gamma_0$. Then the only point dual to $\eta$ with respect to $\Gamma_0$  is $\eta$ itself, i.e., if  $\varphi_n(p) \to \eta$ for some sequence $\{\varphi_n\} \in \Gamma$, then $\varphi^{-1}_n(p) \to  \eta$. 
\end{lemma}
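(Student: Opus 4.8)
The plan is to proceed by contradiction. Suppose $\varphi_n^{-1}(p)\not\to\eta$. Since $\overline{X}$ is compact ($X$ being proper), after passing to a subsequence we may assume $\varphi_n^{-1}(p)\to\zeta$ for some $\zeta\neq\eta$; moreover $\zeta\in X(\infty)$, because $d(p,\varphi_n^{-1}(p))=d(\varphi_n(p),p)\to\infty$. Retaining only this subsequence, we are in the situation $\varphi_n(p)\to\eta$ and $\varphi_n^{-1}(p)\to\zeta$ with $\eta,\zeta\in X(\infty)$ distinct, and it remains to contradict this ``duality'' using that every $\varphi_n$ has finite order.

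The core is to establish a source--sink dynamics for the sequence $\{\varphi_n\}$. First I would record an elementary CAT($0$) fact, proved via convexity of the metric along the geodesics issuing from $p$: if $z_n\in\overline{X}$ and there are points $w_n\in[p,z_n]$ with $d(p,w_n)\to\infty$ and $\sup_n d(w_n,\varphi_n(p))<\infty$, then $z_n\to\eta$; in particular $w_n\to\eta$ whenever $w_n\in X$ and $\sup_n d(w_n,\varphi_n(p))<\infty$. Using this, I prove the pointwise dynamics $\varphi_n(W)\to\eta$ for every $W\in\overline{X}\setminus\{\zeta\}$. If $W\in X$ this is immediate, since $d(\varphi_n(W),\varphi_n(p))=d(W,p)$. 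If $W\in X(\infty)$, let $\sigma_n$ be the geodesic ray from $\varphi_n^{-1}(p)$ to $W$; the visibility axiom, in the form of Lemma~\ref{visibility}, combined with $\varphi_n^{-1}(p)\to\zeta\neq W$, forces $\sup_n d(p,\sigma_n)<\infty$ --- otherwise $\angle_p(\sigma_n)\to 0$, and a comparison-angle estimate would give $\gamma_{p,\zeta}=\gamma_{p,W}$, i.e. $\zeta=W$. Picking $y_n\in\sigma_n$ with $d(p,y_n)$ bounded, the isometry $\varphi_n$ sends $\sigma_n$ to the ray $[p,\varphi_n(W)]$ and $y_n$ to a point of that ray at distance $\to\infty$ from $p$ and within bounded distance of $\varphi_n(p)$; the elementary fact then gives $\varphi_n(W)\to\eta$. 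Running the same argument along a varying sequence $W_k\to W\neq\zeta$ upgrades this to the uniform statement: for any neighbourhoods $U$ of $\eta$ and $V$ of $\zeta$ in $\overline{X}$, one has $\varphi_n(\overline{X}\setminus V)\subseteq U$ for all large $n$.

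To finish, fix disjoint open sets $U\ni\eta$ and $V\ni\zeta$ with $U\cup V\neq\overline{X}$ --- for instance small disjoint truncated cones about $\eta$ and $\zeta$ based at $p$, which both omit $p$. For all large $n$ we then have $\varphi_n(\overline{X}\setminus V)\subseteq U$, and since $U\subseteq\overline{X}\setminus V$ also $\varphi_n(U)\subseteq U$; hence $U\supseteq\varphi_n(U)\supseteq\varphi_n^2(U)\supseteq\cdots$ is a decreasing chain of subsets. As $\Gamma_0$ is a torsion group, $\varphi_n$ has finite order, so some iterate $\varphi_n^{k_n}$ is the identity and the chain is therefore constant; in particular $\varphi_n(U)=U$. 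Then $\varphi_n(\overline{X}\setminus V)\subseteq U=\varphi_n(U)$ and the injectivity of $\varphi_n$ yield $\overline{X}\setminus V\subseteq U$, that is $\overline{X}=U\cup V$, contradicting our choice of $U$ and $V$.

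I expect the main obstacle to be the source--sink dynamics: making precise the elementary fellow-travelling fact and, in particular, the passage from convergence of a far interior point of the ray $[p,\varphi_n(W)]$ to convergence of its endpoint $\varphi_n(W)$, together with the correct application of the visibility axiom to control $d(p,\sigma_n)$ when $W$ is a boundary point. In contrast the concluding step is short, and it is there alone that the torsion hypothesis is actually used.
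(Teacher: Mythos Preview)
Your argument is correct and genuinely different from the paper's. You first establish the source--sink dynamics $\varphi_n(\overline X\setminus V)\subset U$ and then exploit finite order via the decreasing chain $U\supseteq\varphi_n(U)\supseteq\cdots\supseteq\varphi_n^{k_n}(U)=U$ to force $\varphi_n(U)=U$ and hence $\overline X=U\cup V$. The paper, by contrast, never develops the dynamics here: it picks a single element $\phi$ from the sequence with $d(x_0,\phi(x_0))$ large and $\angle_{x_0}(\phi(x_0),\phi^{-1}(x_0))\ge\pi/2$, takes a fixed point $x_1\in X$ of the elliptic $\phi$, and uses the visibility inequality $d(x,y)\ge d(x_0,x)+d(x_0,y)-R$ for wide angles to obtain $d(\phi(x_0),x_1)>d(x_0,x_1)$, contradicting $\phi(x_1)=x_1$.

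What each approach buys: the paper's route is shorter and uses the torsion hypothesis in the most direct way possible (an elliptic isometry has an interior fixed point). Your route front-loads the work into the convergence lemma --- which is exactly the paper's later Lemma~\ref{North-South} --- so you are in effect proving that lemma first and then deducing Lemma~\ref{dual} from it; your chain argument with finite order is an elegant replacement for the fixed-point computation. One small remark: your upgrade to uniform convergence on compacta of $\overline X\setminus\{\zeta\}$ is where the care is needed, as you anticipate; the paper handles this in one line via Lemma~\ref{small viewing angle}, using $\angle_p(\varphi_n(p),\varphi_n(\xi))=\angle_{\varphi_n^{-1}(p)}(p,\xi)\to 0$ uniformly for $\xi$ outside a neighbourhood of $\zeta$, which you may find cleaner than tracking fellow-travelling constants.
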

\begin{proof}
Assuming the contrary that we can find a sequence $\{\varphi_n\} \in \Gamma_0$ so that $\varphi_n(p) \to \eta$ and $\varphi^{-1}_n(p) \to \xi \in X(\infty) \setminus \{\eta\}$. Since $X$ is a visibility space, $\eta$ and $\xi$ can be joined by a geodesic $\gamma$. Fix a base point $x_0 \in \gamma$, then $\angle_{x_0}(\eta,\xi)=\pi$. 

By Lemma \ref{visibility}, there exists a constant $R=R(x_0,X)>0$ such that for any $x,y \in X \setminus \{x_0\}$ with $\angle_{x_0}(x,y) \geq \pi/4$, we have
\begin{equation}
\label{visible v3}
d(x,y) \geq d(x_0,x)+d(x_0,y)-R.
\end{equation}

Since $\eta,\xi$ are dual with respect to $\Gamma_0$, there exists $\phi \in \Gamma_0$ such that
\begin{equation*}
d(x_0,\phi(x_0))=d(x_0,\phi^{-1}(x_0)) > R \textrm{ and } \angle_{x_0}(\phi(x_0),\phi^{-1}(x_0)) \geq \dfrac{\pi}{2}. 
\end{equation*}

Let $x_1$ be a fixed point of $\phi$ (which exists since $\phi$ is elliptic). Since $\angle_{x_0}(\phi(x_0),\phi^{-1}(x_0)) \geq \pi/2$,  $$\max \{ \angle_{x_0}(\phi(x_0),x_1) , \angle_{x_0}(\phi^{-1}(x_0),x_1) \} \geq \dfrac{\pi}{4}.$$ Without loss of generality, we may assume that $\angle_{x_0}(\phi(x_0),x_1) \geq \pi/4$. Applying inequality \eqref{visible v3}, we obtain
\begin{equation*}
d(\phi(x_0),x_1) \geq d(x_0,\varphi(x_0))+d(x_0,x_1)-R > d(x_0,x_1).
\end{equation*}
However, since $\phi$ is an elliptic isometry fixing $x_1$, we have
\begin{equation*}
d(\phi(x_0),x_1)=d(\phi(x_0),\phi(x_1))=d(x_0,x_1),
\end{equation*}
which yields a contradiction. Therefore, the only point dual to $\eta$ is $\eta$ itself.
\end{proof}

\begin{remark}
\label{invariant}
Observe that the set of points in $X(\infty)$ dual to $\eta$ is invariant under the action of $\Gamma_0$. To see this, suppose $\xi$ is dual to $\eta$, i.e., there exists a sequence  $\{\phi_n\} \in \Gamma_0$ such that $$\phi_n(p) \to \eta \textrm{ and } \phi^{-1}_n(p) \to \xi.$$ Then for any $\varphi \in \Gamma_0$, we have $$\phi_n \varphi^{-1}(p) \to \eta \textrm{ and } (\phi_n \varphi^{-1})^{-1}(p)=\varphi \phi_n^{-1}(p) \to \varphi(\xi),$$ which shows that $\varphi(\xi)$ is also dual to $\eta$. 
\end{remark}

As pointed out in Remark \ref{invariant}, by Lemma \ref{dual} we know that every limit point of $\Gamma_0$ is fixed by $\Gamma_0$. We now show that $\Gamma_0$ has a unique limit point. For this purpose, we require the following lemma, which generalizes the contracting-expanding property for individual isometries (see Lemma \ref{axial} and Lemma \ref{parabolic})  to isometric groups actions.
\begin{lemma}
\label{North-South}
Let $X$ be a proper visibility space, and let $\Gamma$ be a group acting on $X$ by isometries. Fix $p \in X$. Assume that $\varphi_n(p) \to \eta^+$ and $\varphi_n^{-1}(p) \to \eta^-$ as $n \to \infty$, where $\{\varphi_n\} \in \Gamma$ and $\eta^+,\eta^- \in X(\infty)$ are not necessarily distinct. Then
\begin{equation*}
\varphi_n(x) \to \eta^+ 
\end{equation*}
for any $x \in \overline{X} \setminus  \{\eta^-\}$. Moreover, the convergence is uniform on compact subsets of $\overline{X} \setminus  \{\eta^-\}$.
\end{lemma}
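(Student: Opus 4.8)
The plan is to establish the North-South dynamics for the sequence $\{\varphi_n\}$ by reducing it, via a viewing-angle argument, to the statement that points far from a moving basepoint are seen at small angle. The key geometric input is Lemma \ref{visibility} (equivalently Lemma \ref{small viewing angle}): since $\varphi_n^{-1}(p) \to \eta^-$ in the cone topology, for any neighborhood $U$ of $\eta^-$ in $\overline{X}$ we have $\sup_{x,y \in \overline{X} \setminus U} \angle_{\varphi_n^{-1}(p)}(x,y) \to 0$. Applying the isometry $\varphi_n$, which preserves angles and sends $\varphi_n^{-1}(p)$ to $p$, this becomes $\sup_{x,y \in \overline{X}\setminus \varphi_n(U)} \angle_p(\varphi_n^{-1}(x), \varphi_n^{-1}(y)) \to 0$ — or, after relabeling, $\sup_{x,y} \angle_p(\varphi_n(x),\varphi_n(y)) \to 0$ where the supremum is over $x,y$ lying outside $\varphi_n^{-1}(\text{a fixed neighborhood of }\eta^-)$.

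First I would fix a compact set $K \subset \overline{X} \setminus \{\eta^-\}$ and choose a neighborhood $U$ of $\eta^-$ disjoint from $K$; I want to show $\varphi_n(K)$ eventually lies in any prescribed neighborhood $V$ of $\eta^+$. Since $\varphi_n(p) \to \eta^+$, it suffices to control the viewing angle $\angle_p(\varphi_n(x), \varphi_n(p))$ for $x \in K$ and show it tends to $0$ uniformly: combined with $d(p,\varphi_n(x)) \geq d(p,\varphi_n^{-1}(p)) - C \to \infty$ (using that $\varphi_n$ is an isometry and $x$ ranges over a bounded set, or more carefully that $d(p,\varphi_n(x)) \to \infty$ because otherwise a subsequence of $\varphi_n(K)$ stays bounded, and then $\varphi_n^{-1}(p)$ would accumulate away from $\eta^-$ by applying $\varphi_n^{-1}$), the truncated-cone neighborhood basis for $\eta^+$ then forces $\varphi_n(x) \in V$. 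So the crux is: $\angle_p(\varphi_n(x),\varphi_n(p)) \to 0$ uniformly for $x \in K$. Here $\varphi_n(p) = \varphi_n(p)$ and we need $p$ to be "outside $\varphi_n(U)$" as well; since $\varphi_n^{-1}(p) \to \eta^- \in U$ eventually, equivalently $p \in \varphi_n(U)$ eventually — so I would instead apply the angle estimate at the basepoint $\varphi_n^{-1}(p)$ directly: $\angle_{\varphi_n^{-1}(p)}(x, \varphi_n^{-1}(p)\text{-side})$... more cleanly, use that $K \cup \{p\} \subset \overline{X}\setminus U$ for a possibly smaller $U$ (still a neighborhood of $\eta^-$ not containing $p$ — which is fine as $\eta^- \in X(\infty)$, $p \in X$), so by Lemma \ref{small viewing angle} applied to the sequence $\varphi_n^{-1}(p) \to \eta^-$, $\sup_{x \in K} \angle_{\varphi_n^{-1}(p)}(x,p) \to 0$, hence $\sup_{x \in K}\angle_{p}(\varphi_n^{-1}(x),\varphi_n^{-1}(p))\to 0$; replacing $\{\varphi_n\}$ by $\{\varphi_n^{-1}\}$ and $\eta^\pm$ by $\eta^\mp$ (the hypotheses are symmetric) gives what I want after one final relabeling.

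The main obstacle I anticipate is the bookkeeping around which basepoint the angle estimate is applied at, and ensuring $d(p,\varphi_n(x)) \to \infty$ uniformly over the compact set $K$ — this needs care because $K$ may contain boundary points, where "distance to $p$" is infinite but the truncated-cone description of neighborhoods must be invoked instead. I would handle the boundary points of $K$ by noting that for $\zeta \in K \cap X(\infty)$, the geodesic ray $\gamma_{p,\zeta}$ has all of its points outside $U$ (shrinking $U$ using compactness of $K$ and openness), so the angle estimate applies along the whole ray, and then $\varphi_n(\gamma_{p,\zeta})$ is a ray from $\varphi_n(p)$ whose initial direction is close to that of $\gamma_{\varphi_n(p), \eta^+}$; the cone topology then places $\varphi_n(\zeta)$ in $V$. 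For the uniformity statement, the estimates above are already uniform over $K$ since Lemma \ref{small viewing angle} gives a supremum over all of $\overline{X}\setminus U$ at once, so no additional argument is needed beyond assembling the pieces.
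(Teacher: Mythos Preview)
Your approach is the paper's: rewrite $\angle_p(\varphi_n(x),\varphi_n(p))$ as $\angle_{\varphi_n^{-1}(p)}(x,p)$ via isometry invariance of angles, then apply Lemma~\ref{small viewing angle} to the sequence $\varphi_n^{-1}(p)\to\eta^-$ with $x,p\in\overline{X}\setminus U$. There is one bookkeeping slip worth fixing: when you transport the angle $\angle_{\varphi_n^{-1}(p)}(x,p)$ to basepoint $p$, you must apply $\varphi_n$ (which sends $\varphi_n^{-1}(p)\mapsto p$), obtaining $\angle_p(\varphi_n(x),\varphi_n(p))$ directly, not $\angle_p(\varphi_n^{-1}(x),\varphi_n^{-1}(p))$. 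Your proposed ``final relabeling'' $\varphi_n\leftrightarrow\varphi_n^{-1}$, $\eta^\pm\leftrightarrow\eta^\mp$ does not repair this, since it would also move the compact set $K$ to the complement of $\eta^+$ rather than $\eta^-$; but once the isometry is applied in the correct direction no relabeling is needed at all.

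The paper also sidesteps your concerns about showing $d(p,\varphi_n(x))\to\infty$ and handling boundary versus interior points of $K$ by reducing at the outset to $x\in X(\infty)$: for $x\in X$ the conclusion is immediate because $d(\varphi_n(x),\varphi_n(p))=d(x,p)$ stays bounded while $\varphi_n(p)\to\eta^+$, and for $x=\xi\in X(\infty)$ one has $\varphi_n(\xi)\in X(\infty)$ automatically, so the angle estimate alone forces $\varphi_n(\xi)\to\eta^+$ in the cone topology.
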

\begin{proof}
It suffices to prove the lemma for points in $X(\infty) \setminus \{\eta^-\}$.  Since $\varphi_n^{-1}(p) \to \eta^-$, it follows from Lemma \ref{small viewing angle} that for any $\xi \in X(\infty) \setminus \eta^-$,
\begin{equation*}
\angle_{p}(\varphi_n(p),\varphi_n(\xi))  =\angle_{\varphi_n^{-1}(p)}(p,\xi) \to 0,
\end{equation*}
as $n \to \infty$. Therefore $\varphi_n(\xi) \to \eta^+$. Given that the visual angle varies continuously, it is standard to conclude that this convergence is uniform. 
\end{proof}

We use the following classical Ping-pong lemma to construct elements of infinite order. It will also be used later to generate free subgroups.
\begin{lemma}[Ping-pong Lemma, see e.g. \cite{Harpe-book}]
\label{Ping-pong Lemma}
Let $G$ be a group acting on a set $Y$, and let $g_1,\cdot,g_n$ be elements of $G$. Suppose that there exist pairwise disjoint nonempty sets $Y_1,\cdots,Y_n \subset Y$ such that
\begin{equation*}
g_i^k(Y_j) \subset Y_i
\end{equation*}
for any $i \neq j$ and any $k \in \mathbb{Z} \setminus \{0\}$. Then the group generated by the $g_1,\cdots,g_n$ is a free group of rank $n$.
\end{lemma}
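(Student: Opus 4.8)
The plan is to run the classical ping-pong dynamics. To prove $\langle g_1,\dots,g_n\rangle$ is free of rank $n$ it suffices to show that $g_1,\dots,g_n$ satisfy no nontrivial relation, i.e. that no nonempty reduced word
\[
w=g_{i_1}^{k_1}g_{i_2}^{k_2}\cdots g_{i_m}^{k_m}\qquad(i_\ell\neq i_{\ell+1},\ k_\ell\in\mathbb{Z}\setminus\{0\},\ m\geq 1)
\]
represents the identity of $G$; I will in fact produce a point $y\in Y$ with $w(y)\neq y$. Here $n\geq 2$ is the relevant case, since for $n=1$ the hypothesis is vacuous.

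The main step is the case $i_1=i_m$, where the first and last syllables of $w$ involve the same generator. There I would choose an index $j\neq i_1$ (possible because $n\geq 2$) and a point $y$ in the nonempty set $Y_j$, and then apply $w$ to $y$ one syllable at a time, from right to left: the rightmost syllable sends $y\in Y_j$ into $Y_{i_m}$ because $j\neq i_m$, the next one sends $Y_{i_m}$ into $Y_{i_{m-1}}$ because $i_{m-1}\neq i_m$, and so on, each application of $g_{i_\ell}^{k_\ell}$ justified by the hypothesis $g_i^k(Y_{i'})\subset Y_i$ for $i\neq i'$, $k\neq 0$. One reaches $w(y)\in Y_{i_1}$, which is disjoint from $Y_j\ni y$; hence $w(y)\neq y$ and $w\neq 1$ in $G$.

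I expect the only genuine obstacle to be the remaining case $i_1\neq i_m$: then the naive chain above, started in $Y_j$, can come back into a set that meets $Y_j$, and a direct argument breaks down. The standard remedy I would use is to pass to the conjugate
\[
w'=g_{i_1}^{k_1}\,w\,g_{i_1}^{-k_1}=g_{i_1}^{2k_1}\,g_{i_2}^{k_2}\cdots g_{i_m}^{k_m}\,g_{i_1}^{-k_1},
\]
and to verify two routine facts: (i) $w'$ is again a nonempty reduced word — successive syllables still involve distinct generators, the new final junction $g_{i_m}^{k_m}g_{i_1}^{-k_1}$ being admissible precisely because $i_m\neq i_1$, and every exponent, including $2k_1$, is nonzero; and (ii) $w'$ now begins and ends with a power of $g_{i_1}$. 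Since $w$ is trivial in $G$ if and only if its conjugate $w'$ is, the previous paragraph applies to $w'$ and gives $w'\neq 1$, hence $w\neq 1$. Carrying this out for every nonempty reduced word shows that $g_1,\dots,g_n$ form a free basis of the group they generate, which is the claim.
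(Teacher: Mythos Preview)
Your argument is the standard ping-pong proof and is correct. Note that the paper itself does not prove this lemma: it is stated with a reference (``see e.g.\ \cite{Harpe-book}'') and then used as a black box. So there is no ``paper's own proof'' to compare against; your write-up supplies exactly the classical argument that the cited reference contains, including the conjugation trick to reduce the case $i_1\neq i_m$ to the case $i_1=i_m$.
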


Now we are ready to prove the main result in this section.

\begin{theorem}
\label{singleton}
Let $X$ be a proper visibility space, and let $\Gamma_0$ be a torsion group acting on $X$ by isometries.  Assume that the orbit of $\Gamma_0$ is unbounded. Then the limit set of a torsion group $\Gamma_0$ in $X(\infty)$ is a singleton $\{\eta\}$. In particular, $\eta$ is fixed by $\Gamma_0$.
\end{theorem}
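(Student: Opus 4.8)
The plan is as follows. Fix a base point $p\in X$ throughout. First I would note that $\mathcal{L}(\Gamma_0)$ is nonempty: since $\Gamma_0\cdot p$ is unbounded and $\overline{X}$ is compact (by properness), any sequence $\varphi_n\in\Gamma_0$ with $d(p,\varphi_n(p))\to\infty$ subconverges in the cone topology to some $\eta\in X(\infty)$. Next I would record that \emph{every} limit point is fixed by $\Gamma_0$: if $\zeta\in\mathcal{L}(\Gamma_0)$, then by Lemma~\ref{dual} the only point of $X(\infty)$ dual to $\zeta$ with respect to $\Gamma_0$ is $\zeta$ itself, while by Remark~\ref{invariant} the set of points dual to $\zeta$ is $\Gamma_0$-invariant; hence $\{\zeta\}$ is $\Gamma_0$-invariant, i.e.\ $\zeta$ is a global fixed point. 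It then remains to prove $|\mathcal{L}(\Gamma_0)|\leq 1$, which I would do by contradiction, assuming $\eta,\xi\in\mathcal{L}(\Gamma_0)$ are distinct.

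I would then pick sequences $\varphi_n,\psi_m\in\Gamma_0$ with $\varphi_n(p)\to\eta$ and $\psi_m(p)\to\xi$. Since $\Gamma_0$ is a torsion group and $\eta,\xi$ are limit points, Lemma~\ref{dual} gives $\varphi_n^{-1}(p)\to\eta$ and $\psi_m^{-1}(p)\to\xi$ as well, so feeding these into Lemma~\ref{North-South} yields the ``north--south'' behaviour: $\varphi_n\to\eta$ uniformly on every compact subset of $\overline{X}\setminus\{\eta\}$ and $\psi_m\to\xi$ uniformly on every compact subset of $\overline{X}\setminus\{\xi\}$. Fixing disjoint open neighbourhoods $U$ of $\eta$ and $V$ of $\xi$ in $\overline{X}$, the complements $\overline{X}\setminus U$ and $\overline{X}\setminus V$ are compact and avoid $\eta$ and $\xi$ respectively, so for large $n$ and $m$ one gets $\varphi_n(\overline{X}\setminus U)\subset U$ and $\psi_m(\overline{X}\setminus V)\subset V$.

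Fixing such large $n,m$, I would set $f=\varphi_n\psi_m\in\Gamma_0$ and observe that, because $U\cap V=\emptyset$ forces $V\subset\overline{X}\setminus U$, one has $\psi_m(\overline{X}\setminus V)\subset V\subset\overline{X}\setminus U$ and therefore $f(\overline{X}\setminus V)\subset\varphi_n(\overline{X}\setminus U)\subset U\subset\overline{X}\setminus V$. Iterating, $f^{k}(\overline{X}\setminus V)\subset U$ for every $k\geq 1$; in particular $f^{k}\neq\mathrm{id}$, since $f^{k}=\mathrm{id}$ would force $\overline{X}\setminus V\subset U$, i.e.\ $\overline{X}=U\cup V$, which is impossible because $\overline{X}$ is connected (it is the closure of the geodesic space $X$) while $U,V$ are nonempty, open and disjoint. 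Thus $f$ has infinite order, contradicting that $\Gamma_0$ is a torsion group. Hence $\mathcal{L}(\Gamma_0)$ is a singleton $\{\eta\}$, and by the first step $\eta$ is fixed by $\Gamma_0$.

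The step I expect to be the main obstacle is producing the infinite-order element in the two-limit-point case. The individual torsion elements $\varphi_n,\psi_m$ are merely elliptic and have no usable dynamics on their own; the key point is that for large indices the product $\varphi_n\psi_m$ does have genuine ping-pong dynamics --- $\psi_m$ first sweeps $\overline{X}\setminus V$ into $V$, then $\varphi_n$ sweeps it further into $U$ --- so that $f$ traps $\overline{X}\setminus V$ inside $U$ under all positive iterates, while connectedness of $\overline{X}$ keeps this from being vacuous. One also has to apply Lemma~\ref{dual} (which is where the torsion hypothesis actually enters) carefully, in order to pin down the backward limits $\varphi_n^{-1}(p)\to\eta$ and $\psi_m^{-1}(p)\to\xi$ needed to invoke Lemma~\ref{North-South}.
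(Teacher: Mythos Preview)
Your proof is correct and follows essentially the same approach as the paper's: Lemma~\ref{dual} to align the backward limits, Lemma~\ref{North-South} for the contractions $\varphi_n(\overline{X}\setminus U)\subset U$ and $\psi_m(\overline{X}\setminus V)\subset V$, and then the production of an infinite-order element from the composite $\varphi_n\psi_m$ to contradict the torsion hypothesis. The only variation is in the final step---the paper cites the Ping-pong Lemma to obtain the contradiction, whereas you argue directly that $f=\varphi_n\psi_m$ traps $\overline{X}\setminus V$ inside $U$ under all positive iterates; this is a cosmetic difference (and incidentally the connectedness appeal can be bypassed by simply choosing $U,V$ small enough that the basepoint $p$ lies in neither).
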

\begin{proof}
Assume the contrary that there exist sequences  $\{\varphi_i\}, \{\psi_i\} \in \Gamma_0$ such that $\varphi_i(p) \to \eta^+$ and $\psi_i(p) \to \eta^-$, where $\eta^+$ and $\eta^-$ are distinct points in $X(\infty)$. By Lemma \ref{dual}, we have $\varphi_i^{-1}(p) \to \eta^+$ and $\psi_i^{-1}(p) \to \eta^-$. Let $U,V$ be disjoint neighborhoods of $\eta^+$ and $\eta^-$, respectively. By Lemma \ref{North-South}, for sufficiently large $I, J \in \mathbb{N}^*$,
\begin{equation*}
\varphi_I^{\pm}(\overline{X}\setminus U) \subset U \textrm{ and } \psi_J^{\pm}(\overline{X} \setminus V) \subset V,
\end{equation*}
which implies that
\begin{equation*}
\varphi_I^{\pm}(V) \subset U \textrm{ and } \psi_J^{\pm}(U) \subset V.
\end{equation*}

\noindent Now we apply the Ping-pong Lemma, i.e. Lemma \ref{Ping-pong Lemma}, to conclude that $\varphi_I$ and $\varphi_J$ generate a free group. In particular, the element $\varphi_I \varphi_J$ has infinite order, contradicting our assumption that $\Gamma_0$ is a torsion group.
\end{proof}

\begin{remark}
We remark here that Theorem \ref{singleton} holds without assuming that $\Gamma_0$ is finitely generated and the the action of $\Gamma_0$ on $X$ is properly discontinuous.
\end{remark}

\section{Generating free groups}\label{sec-fg} \hfill

In this section we prove the first part of Theorem \ref{Tits alternative} using a standard Ping-pong argument. More precisely,

\begin{proposition}
\label{free subgroup}
Let $X$ be a proper visibility space, and let $\Gamma$ be a group acting properly discontinuously by isometries on $X$ such that the limit set satisfies
\[|\cL(\Gamma)|\geq 3.\]
Then $\Gamma$ contains a free subgroup of rank $2$.
\end{proposition}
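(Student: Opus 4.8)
The plan is to run a classical ping-pong argument on the compactification $\overline{X}$: I will produce two non-elliptic isometries $a,b\in\Gamma$ of the same dynamical type whose fixed-point sets in $X(\infty)$ are disjoint, choose pairwise disjoint open neighbourhoods of these (at most four) boundary points, and then invoke the contracting--expanding behaviour of axial isometries (Theorem~\ref{axial}) or of parabolic isometries (Theorem~\ref{parabolic}). These theorems guarantee that for $N$ large the set $Y_1$, a neighbourhood of $\mathrm{Fix}_a\cap X(\infty)$, and the set $Y_2$, a neighbourhood of $\mathrm{Fix}_b\cap X(\infty)$, satisfy $(a^N)^k(Y_2)\subset Y_1$ and $(b^N)^k(Y_1)\subset Y_2$ for every $k\neq 0$, whence the Ping-pong Lemma~\ref{Ping-pong Lemma} yields a free subgroup $\langle a^N,b^N\rangle$ of rank $2$. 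To get started I would note that $\Gamma$ contains an infinite-order element which is axial or parabolic: a group with bounded orbit on a complete CAT($0$) space fixes a point (its circumcentre), so $|\cL(\Gamma)|\geq 3$ forces the $\Gamma$-orbits to be unbounded, and Theorem~\ref{singleton} then rules out $\Gamma$ being a torsion group; hence $\Gamma$ has an element $a$ of infinite order, and $a$ cannot be elliptic because an elliptic element fixes a point $x\in X$ and $\langle a\rangle\leq\Gamma_x$ is finite by proper discontinuity. The whole problem thus reduces to finding $\gamma\in\Gamma$ with $\gamma(F_a)\cap F_a=\emptyset$, where $F_a:=\mathrm{Fix}_a\cap X(\infty)$; one then puts $b:=\gamma a\gamma^{-1}$, which is of the same type as $a$ with $\mathrm{Fix}_b\cap X(\infty)=\gamma(F_a)$.

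Suppose $a$ is axial, so $F_a=\{a^+,a^-\}$. By Lemma~\ref{stability} the group $\Gamma$ cannot stabilize $\{a^+,a^-\}$ setwise, for otherwise the subgroup of index at most $2$ fixing both $a^+$ and $a^-$ is almost infinite cyclic, which would force $|\cL(\Gamma)|\leq 2$. So choose $\gamma_0\in\Gamma$ with $\gamma_0(\{a^+,a^-\})\neq\{a^+,a^-\}$. If $\gamma_0(\{a^+,a^-\})$ shared exactly one point with $\{a^+,a^-\}$, then $\gamma_0$ or $\gamma_0^{-1}$ would lie in the stabilizer of $a^+$ or of $a^-$, which by Lemma~\ref{stability} also fixes the other endpoint of the axis; tracing this through forces $\gamma_0$ to fix or to interchange $a^+$ and $a^-$, contradicting the choice of $\gamma_0$. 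Hence $\gamma_0(\{a^+,a^-\})\cap\{a^+,a^-\}=\emptyset$ and we may take $\gamma:=\gamma_0$.

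Suppose instead $a$ is parabolic with unique fixed point $\xi\in X(\infty)$. If some parabolic element of $\Gamma$ had a fixed point different from $\xi$ we would already have the required pair; and if every parabolic element fixes $\xi$ then so does every conjugate of $a$, i.e.\ $\Gamma=\Gamma_\xi$. Assume for contradiction that $\Gamma=\Gamma_\xi$. Then $\Gamma$ contains no axial element: an axial $c$ would have $\xi$ as an endpoint of its axis, and Lemma~\ref{stability} would make $\Gamma_\xi=\Gamma$ almost infinite cyclic, again giving $|\cL(\Gamma)|\leq 2$. Since parabolic isometries have vanishing translation length by \cite{Wu18} and elliptic isometries have translation length $0$, every element of $\Gamma$ has translation length $0$; as $\Gamma$ fixes $\xi$, the Busemann cocycle $\gamma\mapsto B_\xi(\gamma y)-B_\xi(y)$ is a homomorphism $\Gamma\to\R$, and an element with nonzero value would have positive translation length, so this homomorphism vanishes. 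Therefore any $\Gamma$-orbit lies on a single horosphere $H$ centred at $\xi$. Finally, if $\cL(\Gamma)$ contained a point $\zeta\neq\xi$, join $\xi$ to $\zeta$ by a geodesic line (by visibility), fix a point $x_0$ on this line, and take $q_i=g_i(x_0)\to\zeta$ with $g_i\in\Gamma$; then $\angle_{x_0}(\xi,\zeta)=\pi$ while $\angle_{x_0}(\zeta,q_i)\to 0$, so $\angle_{x_0}(\xi,q_i)\to\pi$, and the visibility estimate of Lemma~\ref{visibility} gives $d(q_i,\gamma_{x_0,\xi}(s))\geq d(x_0,q_i)+s-R$ for $i$ large and all $s$, whence $B_\xi(q_i)\to+\infty$ — impossible since $q_i\in H$. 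Hence $\cL(\Gamma)=\{\xi\}$, contradicting $|\cL(\Gamma)|\geq 3$; this settles the parabolic case, and with it the proposition.

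I expect the main obstacle to be this last step: excluding the degenerate scenario in which $\Gamma$ consists only of elliptic and parabolic isometries all fixing one common boundary point. This is where visibility enters essentially — through the straight ($\pi$-)angle along a geodesic line joining $\xi$ to a would-be second limit point, combined with the Busemann-growth estimate. The overlap analysis in the axial case is elementary once one has the almost-cyclicity of boundary stabilizers from Lemma~\ref{stability}, and the ping-pong bookkeeping with the neighbourhoods is routine.
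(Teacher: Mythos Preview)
Your proof follows the same strategy as the paper: use Theorem~\ref{singleton} to obtain a non-elliptic element, conjugate it to produce a second element with disjoint boundary fixed set, and run ping-pong via Theorem~\ref{axial} or Theorem~\ref{parabolic}. In the parabolic case the paper simply cites \cite[Lemma~2.5]{JiWu} for the conclusion $\cL(\Gamma)=\{\xi\}$, whereas you reprove it via the Busemann-cocycle and visibility estimate; your argument there is correct and self-contained.

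There is one slip in your axial overlap analysis. When $\gamma_0(\{a^+,a^-\})$ meets $\{a^+,a^-\}$ in exactly one point, the case $\gamma_0(a^+)=a^-$ with $\gamma_0(a^-)\notin\{a^+,a^-\}$ can occur, and then neither $\gamma_0$ nor $\gamma_0^{-1}$ lies in $\Gamma_{a^+}$ or $\Gamma_{a^-}$, so your stated reason does not apply. The repair is immediate: the conjugate $\gamma_0 a\gamma_0^{-1}$ is axial with fixed set $\{a^-,\gamma_0(a^-)\}$, so Lemma~\ref{stability} applied to this new axial element forces $a\in\Gamma_{a^-}$ to fix $\gamma_0(a^-)$ as well, whence $\gamma_0(a^-)\in\{a^+,a^-\}$, a contradiction. (The paper sidesteps this case by arguing globally that if \emph{every} $\phi\in\Gamma$ satisfies $\phi(\{a^\pm\})\cap\{a^\pm\}\neq\emptyset$, then $\Gamma_{a^-}$ has index at most $3$ in $\Gamma$, which again yields $|\cL(\Gamma)|\le 2$.)
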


We split the proof into several steps.
\begin{lemma}\label{fg-p}
Let $X, \Gamma$ be as in Proposition \ref{free subgroup}. If $\Gamma$ contains a parabolic isometry $\varphi$ with a unique fixed point $\eta \in X(\infty)$, then there exists $\phi \in \Gamma$ such that
\[\phi(\eta)\neq \eta.\]
\end{lemma}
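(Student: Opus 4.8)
The plan is to argue by contradiction. Suppose that every $\phi \in \Gamma$ fixes $\eta$; that is, $\Gamma = \Gamma_\eta$. I would then derive a contradiction from the hypothesis $|\cL(\Gamma)| \geq 3$ by showing that, when $\Gamma$ fixes a parabolic point $\eta$, its limit set must be exactly $\{\eta\}$.

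**Key steps:**

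First I would recall Theorem \ref{fixed point}: each $\phi \in \Gamma$ is elliptic, axial, or parabolic. If $\Gamma$ contained an axial element $\psi$, then by Theorem \ref{fixed point}(b) $\psi$ fixes exactly two points $\eta', \xi' \in X(\infty)$; since $\psi \in \Gamma_\eta$ fixes $\eta$ as well, we would need $\eta \in \{\eta',\xi'\}$, so $\psi$ translates a geodesic with one endpoint $\eta$. But then by Lemma \ref{stability} every element of $\Gamma_\eta = \Gamma$ fixes both $\eta$ and the other endpoint, and $\Gamma_\eta$ is almost infinite cyclic, hence its limit set has at most $2$ points --- contradicting $|\cL(\Gamma)| \geq 3$. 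So $\Gamma$ contains no axial element; every element is elliptic or parabolic, and the parabolic ones all fix $\eta$ (their unique fixed point must be $\eta$ since they lie in $\Gamma_\eta$).

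Next, I would show $\cL(\Gamma) = \{\eta\}$. Take any limit point $\zeta$, with $\gamma_n(p) \to \zeta$ for some $\gamma_n \in \Gamma$ and $p \in X$. I want $\zeta = \eta$. The idea is that the contracting behaviour of the $\gamma_n$ is forced toward $\eta$: if $\zeta \neq \eta$, pass to a subsequence so that $\gamma_n^{-1}(p) \to \zeta'$ for some $\zeta'\in \overline X$. One shows $\zeta'$ cannot be an interior point (by the standard argument that an accumulating orbit escapes to the boundary under proper discontinuity — or one quotes an analogue of Lemma \ref{North-South}). Then apply Lemma \ref{North-South}: $\gamma_n(x) \to \zeta$ uniformly on compacta away from $\zeta'$. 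Now combine this with the hypothesis that $\gamma_n$ fixes $\eta$: if $\eta \neq \zeta'$, then $\gamma_n(\eta) \to \zeta$, but $\gamma_n(\eta) = \eta$ for all $n$, forcing $\zeta = \eta$, contradiction; and if $\eta = \zeta'$, then looking at $\gamma_n^{-1}$, which also fixes $\eta = \zeta'$, we get $\gamma_n^{-1}(\eta) = \eta \to \zeta' = \eta$, consistent, but now $\gamma_n(x) \to \zeta$ for all $x \neq \eta$; applying this to the whole orbit shows $\cL(\Gamma) \subseteq \{\zeta\} \cup \{\eta\}$, and one rules out a second distinct point $\zeta$ by noting any such convergence also produces its dual, and chasing the fixed-point constraint for the elliptic/parabolic dichotomy — in particular a parabolic element's unique fixed point being $\eta$ pins the limit set down. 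The cleanest route: since $\varphi \in \Gamma$ is parabolic with fixed point $\eta$, Theorem \ref{parabolic} gives $\varphi^n(\overline X \setminus U) \subset U$ for any neighbourhood $U$ of $\eta$; so $\eta \in \cL(\Gamma)$, and any other limit point $\zeta$ would, via the ping-pong setup between $\varphi^n$ (contracting to $\eta$) and the $\gamma_n$ (contracting to $\zeta$), produce a free subgroup or at least an axial element in $\Gamma$, which we already excluded. This contradiction shows $\cL(\Gamma) = \{\eta\}$, contradicting $|\cL(\Gamma)| \geq 3$.

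**Main obstacle:**

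The delicate point is the step where $\zeta' = \eta$, i.e. handling sequences $\gamma_n$ whose dual data also converges to $\eta$: one cannot directly read off $\zeta$ from Lemma \ref{North-South} applied at $\eta$. The robust fix is to avoid this entirely by invoking the parabolic element $\varphi$ itself: because $\varphi$ realizes genuine contraction toward $\eta$ (Theorem \ref{parabolic}) while leaving $\Gamma$ fixing $\eta$ means no element can "move mass away" from $\eta$, one shows every orbit subsequence that converges in $\overline X$ converges to $\eta$. Making the compatibility of Lemma \ref{North-South} with the single fixed point $\eta$ precise --- ensuring $|\cL(\Gamma)| \le 2$ whenever $\Gamma$ stabilizes a boundary point and contains a parabolic --- is the crux, and I expect it to mirror the argument already used in the proof of Theorem \ref{singleton}.
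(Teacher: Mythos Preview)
Your setup matches the paper's: argue by contradiction that $\Gamma=\Gamma_\eta$, then use Lemma~\ref{stability} to rule out axial elements, reducing to the case where every nontrivial element is elliptic or parabolic. The divergence is in the final step, showing $\cL(\Gamma)=\{\eta\}$.

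The paper does not attempt a dynamical argument here; it simply invokes \cite[Lemma~2.5]{JiWu}. The content of that lemma is a horosphere argument: every elliptic or parabolic isometry fixing $\eta$ has translation length $0$ (for parabolics this is \cite{Wu18}), hence preserves every horosphere centred at $\eta$. Thus the entire orbit $\Gamma\cdot p$ lies on a single horosphere $H_\eta$, and in a proper visibility space the only boundary accumulation point of $H_\eta$ is $\eta$ itself (this is the comparison between cone and horocycle topologies). So $\cL(\Gamma)=\{\eta\}$ immediately.

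Your direct dynamical approach has a genuine gap precisely at the point you flag as the ``main obstacle'': the case $\gamma_n(p)\to\zeta\neq\eta$ with $\gamma_n^{-1}(p)\to\eta$. The ping-pong fix you propose does not work, because every $\gamma_n$ fixes $\eta$, so for any neighbourhood $V$ of $\zeta$ disjoint from $\eta$ you have $\gamma_n(\eta)=\eta\notin V$; the required inclusion $\gamma_n(U)\subset V$ fails whenever $\eta\in U$, and Lemma~\ref{Ping-pong Lemma} cannot be applied. Likewise, the claim that this situation ``produces an axial element'' is unsupported: each individual $\gamma_n$ is elliptic or parabolic by hypothesis, and nothing in the North--South dynamics of the \emph{sequence} $\{\gamma_n\}$ forces any single element or product to be axial. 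Finally, your assertion that $\cL(\Gamma)\subseteq\{\zeta,\eta\}$ does not follow from what you have written: a different sequence $\gamma_m'$ could converge to a third point $\zeta''$ with $(\gamma_m')^{-1}(p)\to\eta$ as well. The horosphere argument bypasses all of this, and you should use it (or cite it) instead.
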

\begin{proof}
Suppose for contradiction that $\phi(\eta)=\eta$ for all $\phi\in \Gamma$. By Lemma \ref{stability} we know that $\Gamma$ can not contain any axial isometry. Therefore, all non-trivial element in $\Gamma$ is either parabolic or elliptic. It then follows from \cite[Lemma 2.5]{JiWu} that \[\cL(\Gamma)=\{\eta\},\]
contradicting our assumption that $|\cL(\Gamma)|\geq 3$.
\end{proof}

\begin{lemma}\label{fg-h}
Let $X, \Gamma$ be as in Proposition \ref{free subgroup}. If  $\Gamma$ contains an axial isometry $\varphi$ with two fixed points $\eta^\pm \in X(\infty)$, then there exists $\phi \in \Gamma$ such that
\[\phi(\{\eta^\pm\}) \cap  \{\eta^\pm\}  = \emptyset.\]
\end{lemma}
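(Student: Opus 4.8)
The plan is to mimic the parabolic case (Lemma \ref{fg-p}) but now exploiting the richer structure provided by the axial element. Suppose for contradiction that $\phi(\{\eta^+,\eta^-\}) \cap \{\eta^+,\eta^-\} \neq \emptyset$ for every $\phi \in \Gamma$; that is, every element of $\Gamma$ either fixes one of $\eta^\pm$ or swaps them. First I would pass to the index-$\leq 2$ subgroup $\Gamma' \leq \Gamma$ consisting of those $\phi$ that fix both $\eta^+$ and $\eta^-$ (this is a subgroup because the alternative, swapping the pair, squares to the stabilizer of the pair, and is cut out by the homomorphism $\Gamma \to \mathbb{Z}/2$ recording the action on the two-point set $\{\eta^\pm\}$ — here one uses that under the contradiction hypothesis the whole of $\Gamma$ preserves $\{\eta^+,\eta^-\}$ setwise). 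Note $\varphi \in \Gamma'$ since an axial isometry fixes each of its two endpoints. Since $\Gamma'$ has finite index, $\cL(\Gamma') = \cL(\Gamma)$, so $|\cL(\Gamma')| \geq 3$ as well.

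Next I would observe that $\Gamma' \subseteq \Gamma_{\eta^+}$, the stability subgroup of $\eta^+$. Because $\Gamma$ contains the axial element $\varphi$ whose axis has endpoints $\eta^+,\eta^-$, Lemma \ref{stability} applies: $\Gamma_{\eta^+}$ is almost infinite cyclic. Hence $\Gamma'$, being a subgroup of $\Gamma_{\eta^+}$, is also almost infinite cyclic, in particular virtually cyclic. But a virtually cyclic group acting properly discontinuously by isometries on $X$ has a limit set of cardinality at most $2$: an infinite cyclic subgroup of finite index is generated by a single infinite-order isometry (elliptic is impossible, as elliptics have bounded orbits by Theorem \ref{fixed point}, contradicting infinite order under proper discontinuity), which is axial or parabolic, and thus has limit set of size $2$ or $1$ by Theorem \ref{fixed point}; the finite extension does not enlarge the limit set. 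This contradicts $|\cL(\Gamma')| \geq 3$, completing the proof.

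The only mildly delicate point — and the one I would write out carefully — is the passage to $\Gamma'$ and the verification that under the contradiction hypothesis the whole group $\Gamma$ preserves the two-point set $\{\eta^+,\eta^-\}$, so that the sign homomorphism $\Gamma \to \mathbb{Z}/2$ is well-defined and $\Gamma'$ really is its kernel of index $\leq 2$. A priori the hypothesis only says $\phi(\{\eta^\pm\})$ meets $\{\eta^\pm\}$; one must rule out the possibility that, say, $\phi(\eta^+) = \eta^+$ but $\phi(\eta^-) \notin \{\eta^\pm\}$. This is where Lemma \ref{stability} is used a second time: if $\phi$ fixes $\eta^+$ then $\phi \in \Gamma_{\eta^+}$, and the first assertion of Lemma \ref{stability} forces $\phi(\eta^-) = \eta^-$ as well; similarly if $\phi$ fixes $\eta^-$ then $\phi \in \Gamma_{\eta^-}$ and (applying Lemma \ref{stability} with the roles of $\eta^+,\eta^-$ interchanged, using that $\varphi^{-1}$ is axial with reversed endpoints) $\phi(\eta^+)=\eta^+$; and if $\phi$ maps one of $\eta^\pm$ to the other, applying the previous cases to $\varphi^{\pm 1}\phi$ shows $\phi$ swaps the pair. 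So in every case $\phi$ preserves $\{\eta^+,\eta^-\}$, and the sign homomorphism is defined. Everything else is an immediate invocation of the structure results quoted above.
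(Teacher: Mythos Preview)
Your proposal is correct and follows essentially the same route as the paper: assume the negation, use Lemma~\ref{stability} to see that $\Gamma_{\eta^+}=\Gamma_{\eta^-}$ is almost infinite cyclic and of finite index in $\Gamma$, and conclude $|\cL(\Gamma)|\le 2$. The paper packages the finite-index step as a two-case analysis (either every $\phi$ fixes $\eta^-$, or some $\phi_0$ sends $\eta^-$ to $\eta^+$, giving at most three cosets of $\Gamma_{\eta^-}$) rather than via your sign homomorphism, and it deduces $\cL(\Gamma)=\{\eta^\pm\}$ by noting that $\cL(\Gamma)$ is finite and $\varphi$-invariant, hence contained in the fixed set of some power of the axial $\varphi$; these are cosmetic differences.

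One small correction: your justification of the swapping case (``applying the previous cases to $\varphi^{\pm 1}\phi$'') does not work as written, since $\varphi$ fixes both $\eta^\pm$, so multiplying $\phi$ on either side by $\varphi^{\pm 1}$ does not change where $\eta^\pm$ are sent. The clean fix uses the conjugate instead: if, say, $\phi(\eta^+)=\eta^-$, then $\psi:=\phi\varphi\phi^{-1}$ is axial with endpoints $\phi(\eta^+)=\eta^-$ and $\phi(\eta^-)$; since $\psi\in\Gamma_{\eta^-}$, Lemma~\ref{stability} forces $\psi(\eta^+)=\eta^+$, and as an axial isometry has exactly two fixed points in $X(\infty)$ (Theorem~\ref{fixed point}), one gets $\phi(\eta^-)=\eta^+$. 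With this adjustment your argument that $\Gamma$ preserves $\{\eta^+,\eta^-\}$ setwise goes through, and the rest is fine.
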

\begin{proof}
Suppose for contradiction that for all $\phi \in \Gamma$, $$\phi(\{\eta^\pm\}) \cap  \{\eta^\pm\}  \neq \emptyset.$$ 

There are two cases.

\emph{Case (1).} For any $\phi \in \Gamma$, $\phi(\eta^-)=\eta^-$. By Lemma \ref{stability} it follows that for all $\phi \in \Gamma$, $$\phi(\eta^+)=\eta^+$$ as well, and $\Gamma$ is almost infinite cyclic.  In particular, $\varphi$ generates an infinite cyclic subgroup of finite index in $\Gamma$, and $\eta^+$ and $\eta^-$ are the only limit points of $\langle \varphi \rangle$. Since $\langle \varphi \rangle$ has finite index in $\Gamma$, the limit set $\mathcal{L}(\Gamma)$ is also finite. Moreover, as $\mathcal{L}(\Gamma)$ is invariant under $\varphi$,  there exists $k \in \mathbb{N}$ such that $\varphi^k(\tau)=\tau$ for every $\tau \in \mathcal{L}(\Gamma)$, which  implies that every $\tau \in \mathcal{L}(\Gamma)$ is a fixed point of $\varphi$. Therefore, $\mathcal{L}(\Gamma)=\{\eta^\pm\}$, contradicting our assumption that $\cL(\Gamma)\geq 3$.

\emph{Case (2).} There exists some $\phi_0 \in \Gamma$ such that $\phi_0(\eta^-)=\eta^+$.  By Lemma \ref{stability}, for every $\phi \in \Gamma$, one of the following occurs

\begin{enumerate}
     \item $\phi(\eta^-)=\eta^-$, in which case $\phi \in \Gamma_{\eta^-}$;
    \item $\phi(\eta^-)=\eta^+$, in which case $\phi \in \phi_0 \Gamma_{\eta^-}$;
    \item  $\phi(\eta^+)=\eta^-$, in which case $\phi \in \phi^{-1}_0 \Gamma_{\eta^+}=\phi^{-1}_0 \Gamma_{\eta^-}$.
\end{enumerate}
Thus the almost infinite cyclic group $\Gamma_{\eta^-}$ has finite index in $\Gamma$, which implies that $\Gamma$ itself is also almost infinite cyclic. By repeating the argument from \emph{Case (1)}, we may conclude that $|\cL(\Gamma)|=2$, again contradicting our assumption that $|\cL(\Gamma)|\geq 3$.
\end{proof}

Now we are ready to prove Proposition \ref{free subgroup}.

\begin{proof}[Proof of Proposition \ref{free subgroup}]
By Theorem \ref{singleton} we know that the group $\Gamma$ cannot only consist of torsion elements; otherwise $|\cL(\Gamma)|=1$, that contradicts our assumption. Therefore, $\Gamma$ must contain either an axial or a parabolic element $\varphi$. Now we prove it separately. 

If $\varphi$ is parabolic with a unique fixed point $\eta \in X(\infty)$. By Lemma \ref{fg-p}, there exists $\phi \in \Gamma$ such that $\phi(\eta) \neq \eta$. Set
$$\psi=\phi \varphi \phi^{-1}.$$ 
Then we have $\mathrm{Fix}_\psi=\{\phi(\eta)\}\neq \{\eta\}$. Now choose two disjoint open sets  $U,V \subset \overline{X}$ so that $\eta \in U$  and $\phi(\eta) \in V$. By Lemma \ref{parabolic}, there exists a sufficiently large integer $N \in \mathbb{N}^*$, such that for all $n \geq N$ or $n \leq -N$,
\begin{equation*}
\varphi^n (\overline{X} \setminus U) \subset U \textrm{ and } \psi^n (\overline{X} \setminus V) \subset V,
\end{equation*}
in particular,
\begin{equation*}
\varphi^N (V) \subset U \textrm{ and } \psi^N (U) \subset V.
\end{equation*}
By applying the Ping-pong Lemma, i.e. Lemma \ref{Ping-pong Lemma}, we conclude that $\varphi^N$ and $\psi^N$ generate a free subgroup of rank $2$ in $\Gamma$.

If $\varphi$ is axial with two fixed points $\eta^\pm \in X(\infty)$, by Lemma \ref{fg-h} there exists $\phi \in \Gamma$ such that
\[\phi(\{\eta^\pm\}) \cap  \{\eta^\pm\}  = \emptyset.\]
Similarly, set
$$\psi=\phi \varphi \phi^{-1}.$$ 
Then we have $\mathrm{Fix}_\psi=\{\phi(\eta^{\pm})\}$. Now choose two disjoint open sets  $U,V \subset \overline{X}$ so that $\eta^\pm \in U$  and $\phi(\eta^{\pm}) \in V$. By Lemma \ref{axial}, there exists a sufficiently large integer $N \in \mathbb{N}^*$, such that for all $n \geq N$ or $n \leq -N$,
\begin{equation*}
\varphi^n (\overline{X} \setminus U) \subset U \textrm{ and } \psi^n (\overline{X} \setminus V) \subset V,
\end{equation*}
in particular,
\begin{equation*}
\varphi^N (V) \subset U \textrm{ and } \psi^N (U) \subset V.
\end{equation*}
By applying Lemma \ref{Ping-pong Lemma}, we conclude that $\varphi^N$ and $\psi^N$ generate a free subgroup of rank $2$ in $\Gamma$.

The proof is complete.
\end{proof}

\begin{remark}
We are very grateful to Anders Karlsson for pointing out that a nonabelian free subgroup can be constructed under more general conditions than visibility. Let $X$ be a proper CAT($0$) space and $x_0 \in X$ a basepoint. For $W \subset X$ and $C \in \mathbb{R}$, one may define the \emph{halfspace}
\begin{equation*}
H(W,C):=\{z: d(z,W) \leq d(z,x_0)+C\},
\end{equation*}
and the \emph{star} of a boundary point $\eta \in X(\infty)$ as
\begin{equation*}
S(\eta):=\overline{\bigcup \limits_{C\geq 0} \bigcap \limits_{V \in V_\eta}\overline{H(V,C)},}
\end{equation*}
where $V_\eta$ is the collection of open neighborhoods of $\eta$ in the cone topology. A boundary point $\eta \in X(\infty)$ is said to be \emph{hyperbolic} if 
\begin{equation}
\label{hyperbolic point}
S(\eta)=\{\eta\}. 
\end{equation}
It was shown by Karlsson in  \cite[Theorem 31]{DYIS} that \emph{if a group $\Gamma$ acts properly discontinuously  on a proper CAT($0$) space with $|\mathcal{L}(\Gamma)| \geq 3$, and suppose that at least one point in $\mathcal{L}(\Gamma)$ is hyperbolic in the sense of \eqref{hyperbolic point}, then there exist two axial isometries $\varphi,\psi \in \Gamma$ whose disjoint limit sets consist only of hyperbolic points, and which generate a nonabelian free subgroup.} A key ingredient in the proof of \cite[Theorem 31]{DYIS} is a contraction lemma formulated in terms of halfplanes and stars.  In the visiblity setting, it is not hard to see that every boundary point is hyperbolic. Therefore, Proposition \ref{free subgroup} can also follows from \cite[Theorem 31]{DYIS}. We still include our proof here because it is quite self-contained; moreover, we have certain by-products like Theorem \ref{singleton}, that is of independent interest and is useful to study Question \ref{torsion group}.  
\end{remark}

\begin{remark}
\label{infinitely generated}
The conclusion of the first part of Theorem \ref{Tits alternative} holds without assuming that $\Gamma$ is finitely generated or  that $X$ satisfies the bounded packing property.
\end{remark}

The proof of Proposition \ref{free subgroup} yields the following generalization of Lemma \ref{stability}.
\begin{corollary}
\label{axial cyclic}
Let $\Gamma$ be a group acting properly discontinuously by isometries on a proper visibility space $X$. If $\Gamma$ contains an axial element $\varphi$ and satisfies $|\cL(\Gamma)|=2$, then $\Gamma$ is almost infinite cyclic.
\end{corollary}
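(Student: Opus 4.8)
The plan is to mimic the argument used in \emph{Case (1)} inside the proof of Lemma \ref{fg-h}, but now starting only from the existence of an axial element $\varphi$ together with the hypothesis $|\cL(\Gamma)|=2$, and to show that these force $\Gamma_{\eta^-}$ (or a conjugate of it) to have finite index in $\Gamma$. Since $\varphi$ is axial, by Theorem \ref{fixed point} it has exactly two fixed points $\eta^\pm \in X(\infty)$, and these are limit points of $\langle\varphi\rangle\subset\Gamma$; hence $\cL(\Gamma)=\{\eta^+,\eta^-\}$ because the limit set has exactly two elements. The first step is to observe that $\cL(\Gamma)$ is $\Gamma$-invariant, so every $\phi\in\Gamma$ permutes the two-element set $\{\eta^+,\eta^-\}$. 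Thus the map $\phi\mapsto$ (its induced permutation of $\{\eta^\pm\}$) is a homomorphism $\Gamma\to \mathbb{Z}/2$, whose kernel $\Gamma_{\eta^+}\cap\Gamma_{\eta^-}=\Gamma_{\eta^-}$ (the last equality by Lemma \ref{stability}, since any element fixing $\eta^-$ also fixes $\eta^+$) has index at most $2$ in $\Gamma$.

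Next I would apply Lemma \ref{stability}: because $\Gamma$ contains the axial element $\varphi$ translating a geodesic with endpoints $\eta^\pm$, the stability group $\Gamma_{\eta^-}$ is almost infinite cyclic, i.e. it contains a finite-index infinite cyclic subgroup. Combining this with the previous step, $\Gamma$ contains a finite-index subgroup $\Gamma_{\eta^-}$ that is itself almost infinite cyclic; hence $\Gamma$ is almost infinite cyclic, which is the desired conclusion. Note that unlike Lemma \ref{stability}, here we do not assume a priori that $\Gamma_{\eta^-}$ has finite index — that is exactly what $|\cL(\Gamma)|=2$ buys us, via the $\mathbb{Z}/2$-action on the limit set.

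The only genuine subtlety — and the step I would be most careful about — is verifying that $\{\eta^+,\eta^-\}$ really equals $\cL(\Gamma)$ and that it is $\Gamma$-invariant. Invariance is standard: if $\phi_n(p)\to\xi$ then $\phi\phi_n(p)\to\phi(\xi)$ since $\phi$ extends to a homeomorphism of $\overline X$, so $\cL(\Gamma)$ is preserved by every $\phi\in\Gamma$; and $\eta^\pm\in\cL(\langle\varphi\rangle)\subset\cL(\Gamma)$ because $\varphi^n(p)\to\eta^+$ and $\varphi^{-n}(p)\to\eta^-$ by Theorem \ref{axial}. Since $|\cL(\Gamma)|=2$, this pins $\cL(\Gamma)$ down to $\{\eta^+,\eta^-\}$ exactly. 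Everything else is a short index computation, so the corollary follows immediately; in fact one sees that $\Gamma$ contains an infinite cyclic subgroup of index dividing $2\cdot[\Gamma_{\eta^-}:\langle\text{cyclic}\rangle]$.
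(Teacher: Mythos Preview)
Your proof is correct and follows essentially the same approach as the paper's: both identify $\cL(\Gamma)=\{\eta^+,\eta^-\}$, use its $\Gamma$-invariance to conclude that $\Gamma_{\eta^-}$ has index at most $2$ in $\Gamma$, and then invoke Lemma \ref{stability} to see that $\Gamma_{\eta^-}$ is almost infinite cyclic. The only cosmetic difference is that you package the index-$\leq 2$ step as a homomorphism $\Gamma\to\mathbb{Z}/2$, whereas the paper writes out the coset decomposition $\Gamma\subset\Gamma_\eta\cup\phi_0\Gamma_\eta$ explicitly.
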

\begin{proof}
Let $\eta,\xi$ be the fixed points of $\varphi$. Since $|\cL(\Gamma)|=2$, every $\phi \in \Gamma$ preserves $\{\eta,\xi\}$ setwise. If $\phi(\eta)=\eta$ for all $\phi \in \Gamma$, then by Lemma \ref{stability},  $\Gamma=\Gamma_\eta$ is almost infinite cyclic. Otherwise, there exists $\phi_0 \in \Gamma$  such that  $\phi_0(\eta)=\xi$.  In this case, for every $\phi \in \Gamma$, 
\begin{itemize}
\item either $\phi(\eta)=\eta$, hence $\phi \in \Gamma_\eta$;
\item or $\phi(\eta)=\xi$, hence $\phi_0^{-1} \circ \phi \in \Gamma_\eta$.
\end{itemize}
It follows that $\Gamma \subset \Gamma_\eta \cup \varphi_0 \Gamma_\eta$, and hence the almost infinite cyclic group $\Gamma_\eta$ has finite index in $\Gamma$. Therefore, $\Gamma$ is also almost infinite cyclic.
\end{proof}

\section{Almost nilpotency} \hfill

Let $X$ be a proper visibility space. In this section, we  further assume that $X$ satisfies the bounded packing property. Our aim is to prove Theorem \ref{Tits alternative}. By Proposition \ref{free subgroup}, it remains to prove the second part of Theorem \ref{Tits alternative}.
\begin{theorem}\label{fpnil}
Let $X$ be a proper visibility space satisfying the bounded packing property, and let $\Gamma_0$ be a finitely generated group acting properly discontinuously by isometries on $X$. If $|\mathcal{L}(\Gamma)| \leq 2$, then $\Gamma_0$ is almost nilpotent.
\end{theorem}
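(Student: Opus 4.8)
The plan is to reduce to the case where $\Gamma_0$ is infinite (finite groups are trivially almost nilpotent by Lemma \ref{Cartan}) and where $\Gamma_0$ fixes a point in the boundary. If $|\mathcal{L}(\Gamma_0)|\leq 1$, then since the orbit is unbounded the limit set is a single point $\eta$ and, by standard arguments (cf.\ \cite[Lemma 2.5]{JiWu}), $\Gamma_0$ consists entirely of parabolic and elliptic isometries all fixing $\eta$; if $|\mathcal{L}(\Gamma_0)|=2$ then either $\Gamma_0$ contains an axial element and Corollary \ref{axial cyclic} already gives that $\Gamma_0$ is almost infinite cyclic (hence almost nilpotent), or $\Gamma_0$ has no axial element, in which case every element fixes both boundary points $\eta^\pm$ and we may again pass to the stabilizer $\Gamma_\eta$ of a single point $\eta$. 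So in all remaining cases we are reduced to: $\Gamma_0$ is a finitely generated infinite group of parabolic and elliptic isometries all fixing a common point $\eta\in X(\infty)$ and all fixing every horosphere centered at $\eta$ (the latter after possibly passing to a finite-index subgroup, which is harmless since almost nilpotency passes between a group and its finite-index subgroups).

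Next I would invoke amenability. Following \cite{JiWu}, one shows such a $\Gamma_0$ has subexponential growth — the key point being that a parabolic isometry of a visibility space has vanishing translation length (Wu \cite{Wu18}) and that displacement functions of products can be controlled along horospheres, combined with the bounded packing property to bound the number of group elements in balls. Subexponential growth gives amenability. Now comes the structural heart of the argument: I want to produce a point $x\in X$ where the generators $\phi_1,\dots,\phi_k$ of $\Gamma_0$ are simultaneously almost non-displacing, i.e.
\begin{equation*}
\inf_{x\in X}\ \max_{1\leq i\leq k} d(\phi_i(x),x)=0.
\end{equation*}
To get this I pass to the transverse space $X_\eta=X_{\eta_1}$, which is a proper CAT($0$) space of bounded geometry by Theorem \ref{Transverse proper}, on which $\Gamma_0$ acts by isometries with the same translation lengths. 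If the induced $\Gamma_0$-action on $X_{\eta_1}$ has a fixed point we are done with this step; if not, a $\Gamma_0$-minimal closed convex invariant subset is unbounded, and — using amenability together with the Adams–Ballmann theorem \cite{MR1645958} (an amenable group acting on a proper CAT($0$) space fixes a point in the boundary or preserves a flat) and the quantitative input of Breuillard–Fujiwara \cite{MR4275871} — we obtain a new boundary fixed point $\eta_2\in X_{\eta_1}(\infty)$ and descend to the iterated transverse space $X_{\eta_1,\eta_2}$. Theorem \ref{finite depth} guarantees this iteration terminates after finitely many steps, so eventually $\Gamma_0$ fixes a point in some iterated transverse space $X_{\eta_1,\dots,\eta_m}$; tracing the translation lengths back through the (1-Lipschitz) projection maps and using that parabolics have zero translation length yields the displayed infimum equals $0$.

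Finally, with $\inf_x \max_i d(\phi_i(x),x)=0$ in hand, I apply the generalized Margulis Lemma of Breuillard–Green–Tao \cite{MR3090256}: since $X$ has bounded packing property (equivalently bounded geometry, Lemma \ref{packing geometry}), there is $\varepsilon>0$ so that any group generated by elements moving some point a distance $<\varepsilon$ is virtually nilpotent; taking $x$ in the infimum with $\max_i d(\phi_i(x),x)<\varepsilon$ shows $\Gamma_0$ is almost nilpotent. I expect the main obstacle to be the middle step — correctly setting up the iteration of transverse spaces and verifying that amenability plus Adams–Ballmann plus the Breuillard–Fujiwara estimates genuinely force either a fixed point or a further descent at each stage, and that the translation lengths (hence the relevant displacements in $X$) are not lost under the completions and projections. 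Everything else is reduction bookkeeping or a direct citation.
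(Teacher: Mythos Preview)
Your proposal is correct and follows essentially the same route as the paper: reduce via Corollary \ref{axial cyclic} and Lemma \ref{horosphere-l} to a group of elliptics and parabolics fixing $\eta$ and its horospheres, establish amenability through subexponential growth (Proposition \ref{sublinear} plus bounded packing), then iterate transverse spaces using Adams--Ballmann (with Breuillard--Fujiwara handling the flat case) and finite depth to force $\inf_x\max_i d(\phi_i(x),x)=0$, and conclude by the Breuillard--Green--Tao generalized Margulis Lemma. Two small clarifications: the case $|\mathcal{L}(\Gamma_0)|=2$ with no axial element is in fact vacuous (a parabolic cannot fix two boundary points, cf.\ Theorem \ref{elementary char}), and Breuillard--Fujiwara enters precisely when Adams--Ballmann yields an invariant flat rather than a boundary fixed point---it supplies the fixed point \emph{in} that flat.
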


To derive almost nilpotency in this case, we need the following remarkable theorem of Breuillard-Green-Tao, which is a generalized Margulis Lemma for metric spaces originally conjectured by Gromov.
\begin{theorem}(\cite[Corollay 11.17]{MR3090256})
\label{generalized Margulis}
Let $X$ be a metric space satisfying the bounded packing property, and let $\Gamma$ be a group of isometries acting properly discontinuously on $X$. Suppose that every geodesic ball of radius $4$ in $X$ can be covered by at most $P$ balls of radius $1$. Then there exists a constant $\varepsilon=\varepsilon(P)>0$ such that for every $x \in X$, the group $$\Gamma_\varepsilon(x)= \langle \varphi \in \Gamma: d_\varphi(x) \leq \varepsilon \rangle$$ is almost nilpotent.
\end{theorem}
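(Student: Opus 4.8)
The plan is to reduce the statement to the Breuillard--Green--Tao structure theorem for finite approximate groups: first I would use the packing hypothesis to manufacture, out of the short-displacement isometries at $x$, an approximate group whose covering constant depends only on $P$; then I would invoke the classification of approximate groups to locate a virtually nilpotent piece; and finally I would use the smallness of $\varepsilon$ to force all of $\Gamma_\varepsilon(x)$ into that piece.

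\emph{Geometry to approximate group.} Fix $x \in X$ and for $R>0$ set $S_R = \{\varphi \in \Gamma : d_\varphi(x) \le R\}$. Proper discontinuity makes each $S_R$ finite; since $d_{\varphi^{-1}}(x)=d_\varphi(x)$ the set $S_R$ is symmetric and contains the identity, and the triangle inequality gives $S_R S_R \subseteq S_{2R}$. The geometric heart of the argument is that $S_2$ is an approximate group with constant controlled by $P$. Indeed, every orbit point $\varphi x$ with $\varphi \in S_4$ lies in $\overline{B}(x,4)$, which by hypothesis is covered by $P$ balls of radius $1$; choosing in each such unit ball one orbit point $\gamma_j x$ (when one exists), any other $\varphi \in S_4$ with $\varphi x$ in that ball satisfies $d(x,\gamma_j^{-1}\varphi x)=d(\gamma_j x,\varphi x)<2$, so $\gamma_j^{-1}\varphi \in S_2$. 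Hence
\begin{equation*}
S_2 S_2 \subseteq S_4 \subseteq \bigcup_{j=1}^{P} \gamma_j S_2 .
\end{equation*}
A Ruzsa-type covering argument then upgrades this controlled covering to the statement that $S_2$ (or a bounded power of it) is a $K$-approximate group with $K=K(P)$ depending only on $P$, not on $\Gamma$ or $x$; the volume-growth bound of Lemma \ref{packing} is what keeps this uniform across dyadic scales.

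\emph{Structure theorem and the role of small $\varepsilon$.} Next I would apply the Breuillard--Green--Tao classification of finite $K$-approximate groups: such a group $A$ is covered by $O_K(1)$ left-translates of a coset nilprogression $Q\subseteq A^4$ of rank and step bounded in terms of $K$, producing a virtually nilpotent subgroup $H\le \langle S_2\rangle$ whose structural data depend only on $P$. The smallness of $\varepsilon$ is genuinely essential and enters only now: for $\varepsilon$ of order $1$ the group $\langle S_\varepsilon(x)\rangle$ can be free (as at a basepoint deep in a thin part of a hyperbolic surface), so $\varepsilon=2$ would not work. The remaining step is an escape (commutator-contraction) argument: since each generator in $S_\varepsilon$ moves $x$ by at most $\varepsilon$, iterated commutators of such generators displace $x$ by amounts that contract geometrically with commutator length, and comparing this against the fixed nilprogression scale of the previous step shows that once $\varepsilon=\varepsilon(P)$ is small enough the elements of $S_\varepsilon$ cannot escape the finite-index nilpotent subgroup $H$. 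Consequently $\Gamma_\varepsilon(x)=\langle S_\varepsilon(x)\rangle$ lies inside a virtually nilpotent group and is almost nilpotent, with $\varepsilon$ depending only on $P$.

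The two hard ingredients are the structure theorem --- the deep part of \cite{MR3090256}, resting on the Gleason--Yamabe theorem and ultralimit techniques --- and the escape argument, which is what makes the single threshold $\varepsilon$ uniform across the whole class of spaces. I expect the escape step to be the main obstacle, since it is exactly the mechanism converting the qualitative ``short displacement'' hypothesis into membership in the nilpotent core; in the present paper the result is quoted from \cite[Corollary 11.17]{MR3090256} and used as a black box, and the above merely indicates the route one would follow to establish it from scratch.
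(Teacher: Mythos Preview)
The paper does not prove this theorem at all: it is stated with a citation to \cite[Corollary 11.17]{MR3090256} and invoked as a black box in the proof of Theorem \ref{fpnil}. You explicitly note this at the end of your proposal, so there is no divergence to discuss between your approach and the paper's.

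As a sketch of the Breuillard--Green--Tao argument your outline is broadly on target---the passage from packing to a $K(P)$-approximate group via the sets $S_R$ is the right first move, and the structure theorem is indeed the deep input. One caution: the heuristic you give for the escape step (``iterated commutators of short-displacement elements contract geometrically'') is not literally true in a general metric space; there is no BCH-type mechanism forcing $d_{[a,b]}(x)$ to be small just because $d_a(x)$ and $d_b(x)$ are. The actual argument in \cite{MR3090256} runs through escape norms on the ultralimit and the Gleason--Yamabe machinery rather than a direct commutator-length estimate on $X$, so if you were writing this out in full that step would need to be reformulated. For the purposes of the present paper none of this matters, since the theorem is simply quoted.
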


Following the strategy outlined in \cite{JiWu}, we will first show that the group $\Gamma_0$ in Theorem \ref{fpnil} is amenable. We now recall the following important property which is inspired by  the work of Karlsson-Margulis \cite{MR1729880}
\begin{proposition}(\cite[Proposition 5.3]{JiWu})\label{sublinear}
Let $X$ be a proper visibility CAT(0) space, and let $\Gamma_0$ be a finitely generated group of isometries of $X$. Assume that there exists a point $\eta \in X(\infty)$ such that $\varphi(\eta)=\eta$ for every $\varphi \in \Gamma_0$, and  that $\varphi$ preserves every horosphere centered at $\eta$. Then for any $x\in X$ we have
\[\lim\limits_{k\to \infty}\frac{\max_{|\varphi|_w\leq k}d(x,\varphi(x))}{k}=0,\]
where $|\cdot|_w$ denotes the word norm of $\Gamma_0$.
\end{proposition}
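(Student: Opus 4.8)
The plan is to reduce the statement to a fact about a single subadditive sequence and then to rule out positive linear drift using the visibility structure. Fix $x \in X$ and set
\[
a(k) = \max_{|\varphi|_w \le k} d(x, \varphi(x)),
\]
which is finite because $\Gamma_0$ is finitely generated. If $|\varphi|_w \le j+k$, factor $\varphi = \alpha\beta$ with $|\alpha|_w \le j$ and $|\beta|_w \le k$; then $d(x,\varphi(x)) \le d(x,\alpha(x)) + d(\alpha(x), \alpha\beta(x)) = d(x,\alpha(x)) + d(x,\beta(x)) \le a(j)+a(k)$, so $a$ is subadditive. By Fekete's lemma the limit $L := \lim_{k\to\infty} a(k)/k = \inf_k a(k)/k$ exists and lies in $[0,a(1)]$, and the proposition is equivalent to $L=0$.

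Next I record the geometric normalization. After translating the Busemann function I may assume $B_\eta(x)=0$. Since every $\varphi \in \Gamma_0$ fixes $\eta$ and preserves every horosphere centered at $\eta$, the Busemann cocycle is trivial, i.e. $B_\eta \circ \varphi = B_\eta$; in particular every orbit point $\varphi(x)$ lies on the single horosphere $\{B_\eta = 0\}$. Suppose for contradiction that $L>0$, and choose $\gamma_k$ with $|\gamma_k|_w \le k$ realizing $a(k)$, so that $d(x,\gamma_k(x)) = a(k) \ge Lk \to \infty$. I first claim $\gamma_k(x) \to \eta$ in the cone topology. Indeed, along the geodesic $[x,\gamma_k(x)]$ the convex function $B_\eta$ is bounded above by its endpoint values, both equal to $0$; were a subsequence of $\gamma_k(x)$ to converge to some $\xi \neq \eta$, then $[x,\gamma_k(x)]$ would fellow-travel the ray $\gamma_{x,\xi}$ on long initial segments, and since $B_\eta \to +\infty$ along any ray terminating at a point other than $\eta$ (a standard consequence of visibility, Lemma \ref{visibility}), $B_\eta$ would take large positive values at interior points, contradicting convexity. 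Hence $\gamma_k(x) \to \eta$, and the same argument gives $\gamma_k^{-1}(x) \to \eta$.

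The heart of the proof, following Karlsson-Margulis \cite{MR1729880}, is to show that escape to the single boundary point $\eta$ along a fixed horosphere is incompatible with $L>0$; concretely, I aim to prove the sublinear defect estimate
\[
a(2k) \le a(k) + o(k),
\]
which by the Fekete identity $L = \lim a(2k)/(2k)$ forces $L \le L/2$, hence $L=0$. To produce the defect I would split the optimal word of length $2k$ as $\alpha\beta$ with halves of length $\le k$ and examine the triangle $x,\alpha(x),\alpha\beta(x)$, all lying on $\{B_\eta=0\}$: the naive bound $d(x,\alpha\beta(x)) \le 2a(k)$ can fail to improve only if the vertex $\alpha(x)$ is subtended at angle close to $\pi$, which Lemma \ref{small viewing angle}, applied at the escaping points, rules out quantitatively once the relevant orbit points are kept away from $\eta$. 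Equivalently, one decomposes the displacement into its transverse part $d_\eta(x,\gamma_k(x))$, controlled because each $\varphi$ induces on $X_\eta$ an isometry of the same (vanishing, by \cite{Wu18}) translation length, and its depth into the horoball, governed by visibility; in the uniformly negatively curved model this is exactly the statement that horospherical distance grows logarithmically.

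I expect the defect estimate to be the main obstacle. The difficulty is genuine: the triangle-inequality split of the optimal $2k$-word gives only $a(2k)\le 2a(k)$, and the saving that upgrades this to $a(k)+o(k)$ breaks down for precisely those configurations in which the endpoint $\alpha\beta(x)$ already sits deep in the horoball near $\eta$, so that Lemma \ref{small viewing angle} cannot be invoked at face value there. Overcoming this requires the full Karlsson-Margulis mechanism — tracking the orbit by a horofunction adapted to the escape direction and exploiting the absence of flats guaranteed by visibility — rather than a one-line angle estimate. Once $L=0$ is established, the displayed limit in the proposition is immediate.
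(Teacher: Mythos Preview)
The paper does not supply its own proof of this proposition; it cites \cite[Proposition 5.3]{JiWu} and observes that the hypothesis there can be weakened from $\mathrm{Fix}_\varphi=\{\eta\}$ to the horosphere-preserving condition, since the stronger hypothesis enters the cited proof at only one point. Your reduction via subadditivity and Fekete's lemma to $L=\lim a(k)/k=0$ is correct, and the argument that $\gamma_k(x)\to\eta$ is sound (the assertion that $B_\eta\to+\infty$ along any ray to $\xi\neq\eta$ does follow from Lemma \ref{visibility}).

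The proposal is, however, explicitly incomplete --- you do not establish the defect estimate $a(2k)\le a(k)+o(k)$ --- and the mechanism you sketch towards it has a specific obstruction beyond the difficulty you acknowledge. You propose to obtain the defect from a small angle at $\alpha(x)$ in the triangle $x,\alpha(x),\alpha\beta(x)$ via Lemma \ref{small viewing angle}. But that lemma controls the \emph{Alexandrov} angle, and in a CAT(0) space the Alexandrov angle at a vertex is only a lower bound for the Euclidean comparison angle; knowing the former is bounded away from $\pi$ does not bound the latter away from $\pi$, and hence yields no defect in the triangle inequality for the opposite side $d(x,\alpha\beta(x))$. (Indeed the visibility estimate \eqref{visible v3} in this paper runs the other way: a large Alexandrov angle forces the defect to be \emph{small}.) Your transverse-space aside is likewise inconclusive: that each generator induces an isometry of $X_\eta$ with translation length zero does not bound the growth of $k\mapsto d_\eta(x,\gamma_k(x))$ for a fixed basepoint. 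You have identified the right ingredients --- the horosphere constraint, convexity of $B_\eta$, and a visibility-based thinness of triangles with a vertex at $\eta$ --- but not the quantitative inequality that turns them into $L=0$; that inequality is precisely the content of the cited argument in \cite{JiWu}.
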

\noindent We remark that \cite[Proposition 5.3]{JiWu} originally requires that for every $\varphi \in \Gamma_0$, the fixed set $\mathrm{Fix}_\varphi$ is exactly $\{\eta\}$. However, upon a careful examination of the proof, the only place that this assumptions is applied is in the line above \cite[Equation (26)]{JiWu}, which clearly holds under the weaker assumption that $\varphi$ preserves every horosphere centered at $\eta$ for every $\varphi \in \Gamma_0$. Therefore, the proof of \cite[Proposition 5.3]{JiWu} also yields the preceding Proposition \ref{sublinear}.\\

To prove Theorem \ref{fpnil}, if $\Gamma_0$ contains an axial element, it is known from Corollary \ref{axial cyclic} that $\Gamma_0$ is almost infinite cyclic, and in particular almost nilpotent. So it remains to consider the case when $\Gamma_0$ contains no axial elements.

\begin{lemma}
\label{horosphere-l}
Let $X, \Gamma_0$ be as in Theorem \ref{fpnil}. Suppose that every element of $\Gamma_0 \setminus \{\mathrm{Id}\}$ is either elliptic or parabolic. Then $\Gamma_0$ fixes a point $\eta \in X(\infty)$ and preserves every horosphere centered at $\eta$ setwise; moreover, $\cL(\Gamma_0)=\{\eta\}.$
\end{lemma}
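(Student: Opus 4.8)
The plan is to first produce the fixed point $\eta \in X(\infty)$ and then upgrade "$\Gamma_0$ fixes $\eta$" to "$\Gamma_0$ preserves every horosphere centered at $\eta$", and finally pin down the limit set. Since $|\cL(\Gamma_0)| \le 2$ and every non-trivial element of $\Gamma_0$ is elliptic or parabolic (so $\Gamma_0$ contains no axial element), the main input is that $\Gamma_0$ is infinite: if $\Gamma_0$ were finite it would have a bounded orbit and a global fixed point in $X$ by Lemma \ref{Cartan}, but the interesting case of Theorem \ref{fpnil} is the unbounded one, so I would first reduce to the case $\Gamma_0$ is infinite (when $\Gamma_0$ is finite the statement is vacuous or can be handled trivially since then $\cL(\Gamma_0)=\emptyset$; more precisely I'd state the lemma under the standing assumption that the orbit is unbounded, which is the only case needed). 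Given that the orbit is unbounded, $\cL(\Gamma_0)\neq\emptyset$; pick $\eta \in \cL(\Gamma_0)$, so there is $p\in X$ and $\{\varphi_n\}\subset\Gamma_0$ with $\varphi_n(p)\to\eta$.

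The first substantive step is to show $\cL(\Gamma_0)=\{\eta\}$ and $\Gamma_0$ fixes $\eta$. If $|\cL(\Gamma_0)|=2$, say $\cL(\Gamma_0)=\{\eta,\xi\}$, then by a Ping-pong argument as in Lemma \ref{fg-h}/Proposition \ref{free subgroup} one would produce elements of infinite order — but more directly: since there are no axial elements, I cannot immediately conclude. Instead I would argue as follows. Every element $\phi\in\Gamma_0$ permutes $\cL(\Gamma_0)$, which has at most $2$ points, so some power $\phi^2$ fixes every point of $\cL(\Gamma_0)$ pointwise. If $\cL(\Gamma_0)=\{\eta,\xi\}$ with $\eta\neq\xi$, take $\{\varphi_n\}$ with $\varphi_n(p)\to\eta$; applying the North–South dynamics (Lemma \ref{North-South}) and the fact that passing to a subsequence $\varphi_n^{-1}(p)$ converges to some point which must be a limit point, hence $\eta$ or $\xi$. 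If $\varphi_n^{-1}(p)\to\eta$ then by Lemma \ref{North-South} the maps $\varphi_n$ push $\overline{X}\setminus\{\eta\}$ into any neighborhood of $\eta$, so in particular $\varphi_n(\xi)\to\eta$; combined with a similar sequence $\{\psi_n\}$ realizing $\xi$ with $\psi_n^{-1}(p)\to\xi$, a Ping-pong argument (Lemma \ref{Ping-pong Lemma}) on disjoint neighborhoods $U\ni\eta$, $V\ni\xi$ yields a free subgroup of rank $2$ in $\Gamma_0$ hence an element of infinite order; such an element of $\Gamma_0$ acting on a visibility space with a free subgroup is not elliptic, and has two distinct limit points, forcing it to be axial by Theorem \ref{fixed point}, contradicting our hypothesis. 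The alternative $\varphi_n^{-1}(p)\to\xi$ is handled by Lemma \ref{North-South} directly giving a "dual pair" $(\eta,\xi)$ and then one constructs an axial-like Ping-pong from $\varphi_n$ alone producing again an infinite-order (hence axial) element. Either way we get a contradiction, so $\cL(\Gamma_0)=\{\eta\}$. Since $\cL(\Gamma_0)$ is $\Gamma_0$-invariant, $\Gamma_0$ fixes $\eta$.

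The second substantive step — and I expect this to be the main obstacle — is to show $\Gamma_0$ preserves every horosphere centered at $\eta$, equivalently that the Busemann cocycle $\phi\mapsto B_{p,\eta}(\phi^{-1}(p))$ (the "Busemann character" $\beta_\eta:\Gamma_0\to\R$) vanishes identically. First, $\beta_\eta$ is a homomorphism from $\Gamma_0$ to $(\R,+)$ because all of $\Gamma_0$ fixes $\eta$. Now every element of $\Gamma_0$ is elliptic or parabolic, hence by Theorem \ref{fixed point} (and the theorem of \cite{Wu18} that parabolic isometries of proper visibility spaces have vanishing translation length) every $\phi\in\Gamma_0$ has translation length $|\phi|=0$; it is standard that $|\beta_\eta(\phi)|\le |\phi|=0$, since $B_{p,\eta}(\phi^{-1}(p))$ measures the "drift" of $\phi$ towards or away from $\eta$ and is bounded in absolute value by $d_\phi(x)$ for every $x$, hence by $\inf_x d_\phi(x)=|\phi|=0$. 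Therefore $\beta_\eta\equiv 0$, i.e. every $\phi\in\Gamma_0$ fixes the Busemann function $B_{p,\eta}$ up to the additive constant $0$, which is exactly the statement that $\phi$ preserves every horosphere centered at $\eta$. This completes the proof; the only delicate point is verifying the inequality $|\beta_\eta(\phi)|\le|\phi|$ carefully in the CAT($0$) visibility setting, using convexity of the Busemann function and the triangle inequality, but this is a routine estimate rather than a genuine difficulty.
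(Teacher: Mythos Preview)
Your proof is essentially correct, but you are reconstructing machinery that the paper already has in hand, and your case analysis in the first step is incomplete as written.

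\textbf{First step (producing the fixed point).} The paper's argument is much shorter. It splits into two cases. If every element is elliptic, then proper discontinuity forces each element to have finite order, so $\Gamma_0$ is a torsion group and Theorem \ref{singleton} immediately gives a unique limit point fixed by $\Gamma_0$. If $\Gamma_0$ contains a parabolic $\varphi$ with fixed point $\eta$, and some $\phi\in\Gamma_0$ fails to fix $\eta$, then the three points $\eta$, $\phi(\eta)$, and $(\phi\varphi\phi^{-1})(\eta)$ are pairwise distinct limit points, contradicting $|\cL(\Gamma_0)|\le 2$. This avoids any further Ping-pong.

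Your route instead redoes a North--South/Ping-pong argument to rule out $|\cL(\Gamma_0)|=2$. This works, but two points need tightening. First, when you write ``combined with a similar sequence $\{\psi_n\}$ realizing $\xi$ with $\psi_n^{-1}(p)\to\xi$'', you have not handled the sub-case $\psi_n^{-1}(p)\to\eta$; in that sub-case $\psi_n$ itself already has genuine North--South dynamics and is axial for large $n$, so the case is easy, but it must be said. Second, once you produce a free subgroup you assert that some element is axial. This is true, but the justification ``has two distinct limit points'' needs the observation that for the specific product $g=\varphi_N\psi_N$ (with $\varphi_N^{\pm 1}(\overline X\setminus U)\subset U$ and $\psi_N^{\pm 1}(\overline X\setminus V)\subset V$) one has $g^k(q)\in U$ for $k>0$ and $g^k(q)\in V$ for $k<0$ whenever $q\notin U\cup V$, so that $\langle g\rangle$ accumulates in both $\overline U$ and $\overline V$; an arbitrary infinite-order element could a priori be parabolic.

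\textbf{Second step (horosphere preservation).} Here you and the paper do the same thing. The paper cites \cite[Lemma 2.5]{JiWu}; you unwind it via the Busemann cocycle and the inequality $|\beta_\eta(\phi)|\le |\phi|$, which is indeed the $1$-Lipschitz bound on Busemann functions combined with the fact that $B_{p,\eta}\circ\phi^{-1}-B_{p,\eta}$ is constant when $\phi$ fixes $\eta$. Together with $|\phi|=0$ for elliptic and for parabolic isometries (the latter by \cite{Wu18}), this gives $\beta_\eta\equiv 0$. The paper additionally extracts $\cL(\Gamma_0)=\{\eta\}$ from the same citation, which in your setup follows because horosphere preservation forces every orbit to stay on a fixed horosphere, and on a visibility space horoballs form a neighborhood basis of $\eta$ in the cone topology.

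In short: your argument is sound once the missing sub-case and the axial-element justification are filled in, but the paper's two-case split (torsion via Theorem \ref{singleton}, versus the three-point trick for a parabolic) is considerably more economical.
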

\begin{proof}
If $\Gamma_0$ consists entirely of elliptic elements, then by Theorem \ref{singleton}, $\Gamma_0$ has a global fixed point in $X(\infty)$. 

If $\Gamma_0$ contains a parabolic element $\varphi$ with the fixed point $\eta \in X(\infty)$, then  clearly $\eta$ is a limit point of $\Gamma_0$. For any $\phi \in \Gamma_0$, the conjugate $\psi=\phi \varphi \phi^{-1}$ is also parabolic and fixes $\phi(\eta)$. If $\phi(\eta) \neq \eta$, then it is easy to see that $\eta, \phi(\eta)$ and $\psi(\eta)$ are three distinct points in $\mathcal{L}(\Gamma)$, which contradicts the assumption that $|\cL(\Gamma)|\leq 2$. Therefore $\eta$ is fixed by every element of $\Gamma$.

To show that $\Gamma$ preserves every horosphere centered at $\eta$ and $\cL(\Gamma_0)=\{\eta\}$, observe that for any elliptic element, the translation length is zero by definition; for a parabolic element $\varphi$, it is known from \cite{Wu18} that $|\varphi|=0$. The assertion then follows from \cite[Lemma 2.5]{JiWu}. 
\end{proof}

The following result generalizes  \cite[Proposition 1.5]{JiWu} to visibility spaces. In this setting, the bounded packing condition serves a role analogous to the Bishop-Gromov volume comparison theorem in Riemmanian geometry.

\begin{lemma}
\label{Main 1}
Let $X$ be a visibility CAT($0$) space satisfying the bounded packing property. And let $\Gamma_0$ be a finitely generated group acting properly discontinuously by isometries on $X$. If $\Gamma_0$ fixes a boundary point $\xi \in X(\infty)$, then $\Gamma_0$ is amenable.
\end{lemma}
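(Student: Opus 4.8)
The plan is to follow the strategy of \cite{JiWu} but replace the volume-comparison input (Bishop--Gromov) with the bounded packing property, feeding everything through the known amenability criterion of Karlsson--Margulis and its refinement in \cite{JiWu}. First I would reduce to the case where $\Gamma_0$ contains no axial element: if $\Gamma_0$ contains an axial isometry, then since $\Gamma_0$ fixes $\xi \in X(\infty)$ and, by Lemma \ref{stability}, $\Gamma_{\xi}$ is almost infinite cyclic, the group $\Gamma_0$ is almost infinite cyclic and hence amenable. So assume every nontrivial element of $\Gamma_0$ is elliptic or parabolic. By (the argument of) Lemma \ref{horosphere-l}, $\Gamma_0$ fixes a point $\eta \in X(\infty)$, every element of $\Gamma_0$ has vanishing translation length (using \cite{Wu18} for parabolics), and hence by \cite[Lemma 2.5]{JiWu} every element of $\Gamma_0$ preserves every horosphere centered at $\eta$. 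Note that $\eta$ may or may not coincide with $\xi$; what matters is that we obtain a common fixed boundary point together with horosphere-preservation, which is exactly the hypothesis of Proposition \ref{sublinear}.

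Next I would invoke Proposition \ref{sublinear}: for any $x \in X$,
\begin{equation*}
\lim_{k \to \infty} \frac{\max_{|\varphi|_w \leq k} d(x, \varphi(x))}{k} = 0,
\end{equation*}
i.e. the orbit map $\Gamma_0 \to X$ is sublinear with respect to the word metric. The decisive point is then to combine this sublinear displacement estimate with the bounded packing property to conclude that $\Gamma_0$ has subexponential growth. Concretely: fix a finite generating set and a basepoint $x$; for $\varphi$ with $|\varphi|_w \leq k$ we have $\varphi(x) \in B_x(\epsilon_k k)$ with $\epsilon_k \to 0$. By proper discontinuity there is $\delta_0>0$ such that the orbit points $\varphi(x)$ are $\delta_0$-separated (up to the finite stabilizer of $x$, which contributes only a bounded multiplicative factor). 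By the bounded packing property (equivalently bounded geometry, Lemma \ref{packing geometry}), together with the submultiplicative packing estimate of Lemma \ref{packing}, the number of $\delta_0$-separated points in a ball of radius $R$ is at most $C^{R}$ for a fixed constant $C = C(\delta_0, X)$; hence the number of group elements of word length $\leq k$ is at most (bounded) $\cdot\, C^{\epsilon_k k} = e^{o(k)}$. Thus $\Gamma_0$ has subexponential growth, so it is amenable.

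The main obstacle is the passage from "sublinear orbit displacement plus bounded packing" to "subexponential growth," and in particular making the packing count uniform: one needs the packing number for a ball of radius $\epsilon_k k$ to grow subexponentially in $k$, which is where Lemma \ref{packing} (packing numbers of large balls grow at most exponentially in the radius) is essential, and where the bounded packing property — as opposed to mere properness — cannot be dispensed with, in line with Remark \ref{re-mt}(b). The remaining ingredients are routine: the reduction to the non-axial case is immediate from Lemma \ref{stability}; the extraction of a horosphere-preserving common fixed point is Lemma \ref{horosphere-l}; and "subexponential growth $\Rightarrow$ amenable" is a classical fact. I would also remark that, exactly as in \cite{JiWu}, one does not actually need $\Gamma_0$ to be torsion-free here, since Proposition \ref{sublinear} and the packing argument only use proper discontinuity and finite generation.
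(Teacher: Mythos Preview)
Your proposal is correct and follows essentially the same route as the paper: reduce to the non-axial case via Lemma \ref{stability}/Corollary \ref{axial cyclic}, use Lemma \ref{horosphere-l} to obtain a horosphere-preserving fixed point, apply Proposition \ref{sublinear} for sublinear displacement, feed this through Lemma \ref{packing} and proper discontinuity to get subexponential growth, and conclude amenability. The only cosmetic point is your phrasing that orbit points are ``$\delta_0$-separated up to the finite stabilizer'': distinct orbit points need not be separated at all, and the paper instead uses the cleaner statement that proper discontinuity bounds $\#\{\varphi\in\Gamma_0:\varphi(x)\in B_y(1)\}$ uniformly in $y$---but your parenthetical makes clear you have the right mechanism in mind.
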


\begin{proof} In view of Corollary \ref{axial cyclic}, the interesting case is when $\Gamma_0$ contains no axial element. Let $S$ be a finite generating set for $\Gamma_0$. Combining Proposition \ref{sublinear} and Lemma \ref{horosphere-l}, for any $x \in X$,
\begin{equation}
    \label{short}
    \lim_{k \to \infty}\dfrac{\max_{|\varphi |_w \leq k} d(x, \varphi(x))}{k}=0,
\end{equation}
 where $|\cdot|_w$ denotes the word norm with respect to $S$.

Let $$b_k=\max_{|\varphi |_w \leq k} d(x, \varphi(x)).$$

 Since $X$ satisfies the bounded packing property, there exists $P \in \mathbb{N}^*$ such that every ball  of radius $6$ in $X$ can be covered by at most $P$ balls of radius $1$. By Lemma \ref{packing}, a ball of radius $b_k$ can be covered by at most $(1+P)^{b_k}$ balls of radius $1$.

Since the action of $\Gamma_0$ on $X$ is properly discontinuous, there exists $K \in \mathbb{N}^*$ such that for any $y \in X$, $$ \# \{\varphi \in \Gamma_0: \varphi(x) \in B_y(1)\} \leq K.$$ We then obtain
\begin{eqnarray*}
\# \{\varphi \in \Gamma_0: |\varphi|_w \leq k\} &\leq& \# \{\varphi \in \Gamma_0: d(x,\varphi(x))\leq b_k\} \\
& \leq& K (1+P)^{b_k}.
\end{eqnarray*}
Taking logarithms of both sides and applying \eqref{short} to the limit yields
\begin{equation*}
 \limsup_{k \to \infty}\frac{\log (\# \{\varphi \in \Gamma_0: |\varphi|_w \leq k\})}{k}= 0.
\end{equation*}
This shows that $\Gamma_0$ has subexponential growth, and hence is amenable \cite[Page 205]{Harpe-book}.
\end{proof}

To show that $\Gamma_0$ is indeed almost nilpotent, we need to iteratively apply the following results of Adams-Ballmann and Breuillard-Fujiwara in the transverse spaces.

\begin{theorem}(\cite{MR1645958})
\label{AB}
Let $X$ be an unbounded proper CAT($0$) space. If $\Gamma$ is an amenable group acting on $X$ by isometries, then $\Gamma$ fixes either a point in $X(\infty)$ or a flat in $X$ setwise.
\end{theorem}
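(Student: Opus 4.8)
I would follow the approach of Adams and Ballmann. The engine is amenability: since $X$ is proper, its compactification $\overline{X}=X\cup X(\infty)$ is a compact metrizable space on which $\Gamma$ acts by homeomorphisms, so amenability of $\Gamma$ supplies a $\Gamma$-invariant Borel probability measure $\mu$ on $\overline{X}$.

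The first reduction is to arrange that $\mu$ is supported on $X(\infty)$. If some $\Gamma$-orbit in $X$ is bounded, then the closed convex hull of that orbit is a bounded closed convex $\Gamma$-invariant set and its circumcenter is a $\Gamma$-fixed point of $X$ — a $0$-dimensional flat — and we are done; so assume every $\Gamma$-orbit in $X$ is unbounded. Under this hypothesis, if $\mu$ had a point mass in $X$ then the finite, non-empty set of atoms of maximal weight would be a bounded $\Gamma$-invariant set, contradicting unboundedness; and a non-atomic part of $\mu$ on $X$ can be disposed of by a barycenter/renormalization argument in the CAT($0$) space $X$, which again yields either a fixed point of $X$ or lets one discard that part and keep a $\Gamma$-invariant probability measure $\mu$ on $X(\infty)$.

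The core is the analysis of an averaged Busemann function. Fix $x_0\in X$ and, for $\xi\in X(\infty)$, let $b_\xi$ be the Busemann function centered at $\xi$ normalized by $b_\xi(x_0)=0$. Put
$$\phi(x)=\int_{X(\infty)} b_\xi(x)\,d\mu(\xi).$$
Each $b_\xi$ is convex and $1$-Lipschitz, hence so is $\phi$; and the cocycle identity $b_{\gamma\xi}(\gamma x)=b_\xi(x)+\beta(\gamma,\xi)$, where $\beta(\gamma,\xi)=B_{\gamma^{-1}x_0,\xi}(x_0)$ is bounded in $\xi$, together with $\gamma_*\mu=\mu$ gives $\phi(\gamma x)=\phi(x)+c(\gamma)$ with $c(\gamma)=\int\beta(\gamma,\xi)\,d\mu(\xi)$. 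Thus $\phi$ is $\Gamma$-invariant up to additive constants, so its sublevel sets are permuted by $\Gamma$ and the minimal set $\mathrm{Min}(\phi)$, when non-empty, is closed, convex and $\Gamma$-invariant.

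Now I would run a dichotomy and iterate. If $\phi$ does not attain its infimum, the asymptotic ``descent set at infinity'' of $\phi$ — the limit of the boundaries of the sublevel sets $\{\phi\le\inf\phi+\varepsilon\}$ as $\varepsilon\downarrow 0$ — is a non-empty $\Gamma$-invariant subset of $X(\infty)$; when it is a single point we have a $\Gamma$-fixed boundary point, and otherwise it carries a flat structure on which one recurses. If $\phi$ attains its infimum, then on the closed convex $\Gamma$-invariant set $Y=\mathrm{Min}(\phi)$ the function $\phi$ is constant, which forces, for $\mu$-almost every $\xi$, the restriction of $b_\xi$ to $Y$ to be affine in a common family of directions; by the product-splitting theorem for CAT($0$) spaces this produces a $\Gamma$-invariant Euclidean factor $F\subseteq Y$ (a point in the degenerate case), the desired flat, or else lets one pass to a strictly smaller invariant subspace and repeat. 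The process halts after finitely many steps because $X$ is proper. The main obstacle is precisely this last part: extracting a genuine $\Gamma$-invariant flat from $\mathrm{Min}(\phi)$ while keeping track of $\Gamma$-equivariance through the successive Euclidean splittings and guaranteeing termination of the induction — this, together with the measure-reduction step above, is where the real work lies.
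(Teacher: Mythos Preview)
The paper does not prove this theorem at all: it is quoted verbatim from Adams--Ballmann \cite{MR1645958} and used as a black box in the proof of Theorem~\ref{fpnil}. There is therefore nothing in the paper to compare your proposal against.

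That said, your sketch does follow the genuine Adams--Ballmann strategy (invariant measure on the boundary, averaged Busemann function, analysis of its minimum set), but several steps are either unnecessarily indirect or not quite right. First, since $X$ is proper and unbounded, $X(\infty)$ is itself a non-empty compact $\Gamma$-space, so amenability gives an invariant probability measure on $X(\infty)$ directly; there is no need to start on $\overline{X}$ and then ``dispose of'' mass inside $X$, and your barycenter/renormalization argument for the non-atomic part on $X$ is not actually a valid move without further work. Second, your termination claim ``the process halts after finitely many steps because $X$ is proper'' is not a reason: properness by itself gives no finite-dimensionality invariant that decreases along your iteration. In Adams--Ballmann the argument is organized differently --- they analyze the structure of the support of $\mu$ in the Tits boundary and the flat subspaces it determines, rather than running an open-ended recursion on invariant subspaces --- and that is precisely what makes the extraction of the invariant flat go through. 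So your outline identifies the right engine (the $\mu$-averaged Busemann cocycle), but the reduction to the boundary and, more seriously, the endgame producing the flat and guaranteeing termination are where the actual content lies, and those parts of your sketch are not yet arguments.
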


\begin{proposition}(\cite[Propostion 9.3]{MR4275871})
\label{BF}
Let $S$ be a finite set of isometreis of $\mathbb{R}^n$ and $\Gamma_0= \langle S \rangle$ the group generated by $S$. Denote by $| \cdot |_w$ the word norm with respect to $S$. If
\begin{equation*}
\lim_{k \to \infty}\dfrac{\max_{|\varphi |_w \leq k} d(x, \varphi(x))}{k}=0,
\end{equation*}
Then $\Gamma_0$ has a global fixed point in $\mathbb{R}^n$.
\end{proposition}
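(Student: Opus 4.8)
The plan is to use the fact that every isometry of $\mathbb{R}^n$ has the affine form $\gamma(y)=\rho(\gamma)y+b_\gamma$ with $\rho(\gamma)\in O(n)$, where $\rho\colon\Gamma_0\to O(n)$ records the rotational parts and $b_\gamma=\gamma(0)$ defines a cocycle, $b_{\gamma\delta}=b_\gamma+\rho(\gamma)b_\delta$. Since $d(y,\gamma(y))\le 2\,d(0,y)+d(0,\gamma(0))$, the sublinear hypothesis is independent of the basepoint, so I would take $x=0$; the hypothesis then reads $\max_{|\gamma|_w\le k}|b_\gamma|=o(k)$. A global fixed point exists if and only if $\Gamma_0$ has a bounded orbit (the circumcenter of the orbit's closed convex hull is then fixed, cf.\ Lemma~\ref{stability finite}), so the entire task reduces to bounding the orbit $\{b_\gamma\}$.

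Next I would reduce the dimension using the rotational parts. Let $K=\overline{\rho(\Gamma_0)}\subseteq O(n)$, a compact group, and split $\mathbb{R}^n=V_0\oplus V_1$ orthogonally, where $V_0=(\mathbb{R}^n)^K$ is the subspace fixed by every rotational part and $V_1=V_0^\perp$. Both are $\Gamma_0$-invariant, and the affine action descends to an affine action on the quotient $\mathbb{R}^n/V_0\cong V_1$; the projected cocycle $\beta_1=\mathrm{proj}_{V_1}\,b$ still satisfies the sublinear bound, since orthogonal projection is $1$-Lipschitz. On $V_0$ the rotational parts act trivially, so $\beta_0=\mathrm{proj}_{V_0}\,b$ is a genuine homomorphism $\Gamma_0\to V_0$; applying the bound to powers, $k\,|\beta_0(\gamma)|=|\beta_0(\gamma^k)|\le\max_{|\psi|_w\le k|\gamma|_w}|b_\psi|=o(k)$ forces $\beta_0\equiv 0$. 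Hence if $V_0\neq 0$ I can induct on dimension: the $V_1$-action has a fixed point $v_1$ by the inductive hypothesis, and because $b_\gamma\in V_1$ (as $\beta_0=0$) together with the $V_1$-invariance, the point $v_1\in V_1\subseteq\mathbb{R}^n$ is fixed by the full action.

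It remains to treat the base case $V_0=0$, i.e.\ the rotational parts have no common invariant direction. Here I would consider the convex displacement function $D(y)=\max_{s\in S}|s(y)-y|$. The condition $V_0=0$ means that for every unit vector $u$ some generator satisfies $(\rho(s)-I)u\neq 0$, so the recession function of $D$ is strictly positive in every nonzero direction; hence $D$ is coercive and attains its minimum $d_0$ at some $x_*$. The goal becomes to show $d_0=0$, which yields the fixed point. This is exactly the step where the sublinear hypothesis is indispensable, and it is the main obstacle: I must show that $d_0>0$ would produce, out of the generators, a word whose displacement grows linearly in its word length (equivalently, an element of positive translation length), contradicting $\max_{|\gamma|_w\le k}|b_\gamma|=o(k)$. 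The mechanism I would exploit is that, when no point is almost fixed by all generators, the compactness of $K$ together with the normal form of a Euclidean isometry (a rotation about, plus a translation along, its fixed affine subspace) forces a genuine translational/screw component to emerge in some product; quantitatively one would bound the drift $\lim_{k}\tfrac1k\max_{|\gamma|_w\le k}|b_\gamma|$ below by a positive multiple of $d_0$.

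As a cleaner route through the base case, I would first record that the sublinear hypothesis forces every element to be elliptic: since $d(0,\gamma^k(0))\ge k\,|\gamma|$ (the fixed-axis component of $\gamma^k(0)$ grows linearly in the translation length) while $|\gamma^k|_w\le k|\gamma|_w$, sublinearity gives $|\gamma|=0$ for every $\gamma$. The base case then reduces to the purely geometric assertion that a finitely generated group of Euclidean isometries all of whose elements are elliptic, and whose rotational parts have no common invariant vector, must fix a point. This can be approached by proving such a group has bounded orbits, using that each $b_\gamma\in\mathrm{Im}(\rho(\gamma)-I)$ and analysing the recession directions of the closed convex hull of $\{b_\gamma\}$ against the $K$-invariance of that hull. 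Establishing this boundedness is precisely the hard core of the argument; everything else is the affine bookkeeping of the two paragraphs above.
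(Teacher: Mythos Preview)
The paper does not prove this proposition; it is quoted from Breuillard--Fujiwara \cite[Proposition~9.3]{MR4275871} and used as a black box in the proof of Theorem~\ref{fpnil}. There is no argument in the present paper to compare your outline against.

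On its own merits: the cocycle formalism, the basepoint independence, the orthogonal splitting $\mathbb{R}^n=V_0\oplus V_1$, the vanishing of the homomorphism $\beta_0$ via $k\,|\beta_0(\gamma)|=|\beta_0(\gamma^k)|=o(k)$, the induction step when $V_0\neq 0$, and the deduction that sublinearity forces every individual element to be elliptic are all correct. The gap is exactly where you place it: in the base case $V_0=0$ you describe two routes and complete neither, explicitly calling this ``the hard core of the argument.'' As written this is a plan, not a proof.

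A word on your second route. The statement you isolate at the end --- that a finitely generated group of elliptic Euclidean isometries with $(\mathbb{R}^n)^K=0$ has a bounded orbit --- is not a genuine reduction: once you have passed to $V_0=0$, ``every element is elliptic'' is precisely what the sublinear hypothesis buys you, so you have merely restated the base case. The first route is the honest one: you must show that $d_0=\min_y\max_{s\in S}|s(y)-y|>0$ forces $\max_{|\gamma|_w\le k}|b_\gamma|$ to grow linearly in $k$, contradicting the hypothesis. Producing such a drift estimate is the actual content of the proposition, and it is this step that Breuillard--Fujiwara supply; you would need either to reproduce their argument or to cite it to have a complete proof.
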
    

We are now ready to complete the proof of Theorem \ref{fpnil}.
\begin{proof}[Proof of Theorem \ref{fpnil}.]
In view of Lemma Corollary \ref{axial cyclic}, we may assume that $\Gamma_0$ contains no axial elements. Set $\eta_1=\eta$.  By Lemma \ref{packing geometry} and Theorem \ref{Transverse proper}, $X_{\eta_1}$ is a complete CAT($0$) space satisfying the bounded packing property.  By Lemma  \ref{horosphere-l}, $\Gamma_0$ induces a group acting (not necessarily properly discontinuously) by isometries on $X_{\eta_1}$.
 
 If $X_{\eta_1}$ is bounded, by the Cartan fixed point theorem, $\Gamma_0$ fixes the  unique center  of $X_{\eta_1}$. This together with the definition of transverse spaces implies that $$\inf\limits_{x\in X}\max\limits_{1\leq j\leq k}d(\phi_j\circ x, x)=0$$ where $\Gamma_0=\langle \phi_1,\cdots, \phi_k\rangle$. In particular, there exists a point $q_0 \in X$ so that $$d_{\phi_j}(q_0)< \varepsilon \text{ for all } 1\leq j \leq k,$$ where $\varepsilon$ is the Margulis constant as in Theorem \ref{generalized Margulis}. It then follows from Theorem \ref{generalized Margulis} of Breuillard-Green-Tao that $\Gamma_0$ is almost nilpotent.
    
If $X_{\eta_1}$ is unbounded, then by Theorem \ref{AB} we have,
\begin{enumerate}
\item either $\Gamma_0$  preserves a flat $F \subset X_{\eta_1}$ setwise;
\item or $\Gamma_0$ fixes a point $\eta_2 \in X_{\eta_1}(\infty)$. 
\end{enumerate} 
    
    \emph{Case (1).} Observe that for any $x,y \in X$, $$d_{\eta_1}(\pi_{\eta_1}(x),\pi_{\eta_2}(y)) \leq d(x,y).$$ It follows that for any $x \in F$,
    \begin{equation*}
    \lim_{k \to \infty}\dfrac{\max_{|\varphi |_w \leq k} d_{\eta_1}(x, \varphi(x))}{k}=0.
    \end{equation*}
Applying Proposition \ref{BF}, we obtain a point $y \in F$ fixed by $\Gamma_0$. Then similar as the case when $X_{\eta_1} $ is bounded, one may also conclude that $\Gamma_0$ is almost nilpotent.

\emph{Case (2).} Consider the induced isometric action of $\Gamma_0$ on the iterated transverse space $X_{\eta_1,\eta_2}$. Again, by Theorem \ref{AB}, either $\Gamma_0$ fixes a flat in $X_{\eta_1,\eta_2}$, in which case $\Gamma_0$ has a global fixed point in $X_{\eta_1,\eta_2}$, or $\Gamma_0$ fixes another point $\eta_3 \in X_{\eta_1, \eta_2}(\infty)$.  we can iterate this process provided that the  resulting transverse space is unbounded. By Theorem \ref{finite depth}, the iteration terminates after finitely many steps. At the final step, either $\Gamma_0$ fixes a flat in $X_{\eta_1,\ldots,\eta_{m}}$ with $m$ bounded above by the depth of $X$, in which case a global fixed point for $\Gamma_0$ exists; or $X_{\eta_1,\ldots,\eta_{m}}$ is eventually bounded, in which case the existence of a global fixed point for $\Gamma$ is guaranteed by the Cartan fixed point theorem. By continuity, we can select a sequence of points $x_{m-1},\cdots, x_1,x_0$ with $x_i \in  X_{\eta_1,\ldots,\eta_i}$ for $1 \leq i \leq m-1$ and $x_0 \in X$, such that $$d_{\phi_j}(x_i)< \dfrac{m-i}{m} \varepsilon \textrm{ for all } 1\leq j \leq k \textrm{ and  every } 0 \leq i \leq m-1.$$ In particular, we also have $$d_{\phi_j}(x_0)< \varepsilon \text{ for all } 1\leq j \leq k,$$ The almost nilpotency of $\Gamma_0$ then follows from Theorem \ref{generalized Margulis}, i.e. the generalized Margulis Lemma. 

The proof is complete.
\end{proof}

We are now able to provide a complete characterization of the group in Theorem \ref{Tits alternative} in terms of the cardinality of their limit sets.
\begin{theorem}
\label{elementary char}
Let $X, \Gamma_0=\Gamma$ be as in Theorem \ref{fpnil}. Then exactly one of the following holds:
\begin{itemize}
    \item $|\mathcal{L}(\Gamma_0)|=0$ if and only if $\Gamma_0$ is finite;
    \item $|\mathcal{L}(\Gamma_0)|=1$ if and only if every non-identity element of $\Gamma_0$ is either elliptic or parabolic, in which case $\Gamma_0$ is almost nilpotent;
    \item $|\mathcal{L}(\Gamma_0)|=2$ if and only if $\Gamma_0$ contains an axial element, in which case $\Gamma_0$ is almost infinite cyclic.
\end{itemize}
\end{theorem}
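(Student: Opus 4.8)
\textbf{Proof proposal for Theorem \ref{elementary char}.}

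The plan is to verify the three equivalences one at a time, using the structural results already established: Lemma \ref{stability finite} (Cartan fixed point theorem), Lemma \ref{horosphere-l}, Theorem \ref{fpnil}, Corollary \ref{axial cyclic}, Theorem \ref{fixed point}, and the observation at the start of Section \ref{torsion} together with the running hypothesis $|\cL(\Gamma_0)| \leq 2$. Since these three cases are exhaustive and mutually exclusive for the possible values of $|\cL(\Gamma_0)| \in \{0,1,2\}$, it suffices to prove the forward implications (or the reverse implications) on each line and note that the trichotomy on the right-hand side is also exhaustive and mutually exclusive; the biconditionals then follow by pure logic.

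First I would dispose of the case $|\cL(\Gamma_0)| = 0$. If $\Gamma_0$ is finite, then by Lemma \ref{stability finite} it fixes a point in $X$, hence has a bounded orbit, so no sequence in $\Gamma_0$ can converge to a boundary point and $\cL(\Gamma_0) = \emptyset$. Conversely, if $|\cL(\Gamma_0)| = 0$, then every $\Gamma_0$-orbit is bounded: otherwise, by properness of $X$ one could extract a subsequence of an unbounded orbit converging in $\overline{X}$, necessarily to a point of $X(\infty)$ (as the orbit escapes every bounded set by proper discontinuity), producing a limit point. A bounded orbit in a proper space is precomact, and proper discontinuity forces the orbit to be finite; by Lemma \ref{stability finite} again $\Gamma_0$ fixes a point of $X$, and proper discontinuity of the action on $X$ makes the stabilizer of that point finite, so $\Gamma_0$ is finite.

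Next, the case $|\cL(\Gamma_0)| = 2$. If $\Gamma_0$ contains an axial element, then that element has two fixed points in $X(\infty)$, which are limit points, so $|\cL(\Gamma_0)| \geq 2$, and combined with the standing hypothesis $|\cL(\Gamma_0)| \leq 2$ we get $|\cL(\Gamma_0)| = 2$; moreover Corollary \ref{axial cyclic} gives that $\Gamma_0$ is almost infinite cyclic. Conversely, if $|\cL(\Gamma_0)| = 2$, I claim $\Gamma_0$ must contain an axial element: if not, every nontrivial element is elliptic or parabolic, and Lemma \ref{horosphere-l} forces $\cL(\Gamma_0)$ to be a singleton, a contradiction. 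Finally, the case $|\cL(\Gamma_0)| = 1$: by the trichotomy of Theorem \ref{fixed point} and what has just been shown, $|\cL(\Gamma_0)| = 1$ is equivalent to "$\Gamma_0$ is infinite and contains no axial element," i.e. every nontrivial element is elliptic or parabolic (with $\Gamma_0$ infinite, which is automatic since a single limit point forces an unbounded orbit). In that case Theorem \ref{fpnil} (or the already-noted subcase of its proof via Lemma \ref{horosphere-l} and the transverse-space iteration) gives almost nilpotency. The only genuine subtlety — and hence the step I would handle most carefully — is making the $|\cL(\Gamma_0)| = 0 \iff \Gamma_0$ finite equivalence airtight, specifically the extraction of a boundary limit point from an unbounded orbit using properness and proper discontinuity; everything else is a bookkeeping recombination of results proved earlier in the paper.
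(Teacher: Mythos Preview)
Your proposal is correct and follows essentially the same route as the paper: Lemma \ref{stability finite} for the $|\cL|=0$ case, the observation that an axial element forces $|\cL|\ge 2$ together with Corollary \ref{axial cyclic} for the $|\cL|=2$ case, and Lemma \ref{horosphere-l} plus Theorem \ref{fpnil} for the remaining case. The one place you are slightly more efficient than the paper is in the implication ``$|\cL(\Gamma_0)|=2 \Rightarrow \Gamma_0$ contains an axial element'': you invoke the ``moreover $\cL(\Gamma_0)=\{\eta\}$'' clause of Lemma \ref{horosphere-l} directly, whereas the paper rederives this by splitting into the torsion subcase (using Theorem \ref{singleton}) and the parabolic subcase (using that a parabolic isometry has a unique boundary fixed point).
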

\begin{proof}
The first assertion follows immediately from Lemma \ref{stability finite}. Observe that if $\Gamma_0$ contains an axial element, then $|\mathcal{L}(\Gamma_0)| \geq 2$. The second assertion then follows from Lemma \ref{horosphere-l} and Theorem \ref{fpnil}.

For the third assertion, we first show that $|\mathcal{L}(\Gamma_0)|=2$ implies $\Gamma_0$ contains an axial element. Assume the contrary that $\Gamma_0$ contains no axial elements, then by Lemma \ref{fpnil}, $\Gamma_0$ fixes a point $\eta \in X(\infty)$. If $\Gamma_0$ is a torsion group, by Theorem \ref{singleton}, $\eta$ is the unique limit point of $\Gamma_0$, contradicting $|\mathcal{L}(\Gamma_0)|=2$. If $\Gamma_0$ contains a parabolic element $\varphi$, then $\varphi$ preserves $\mathcal{L}(\Gamma_0)$ setwise. Since $\mathcal{L}(\Gamma_0)$ contains the unique fixed point of $\varphi$, the other point in $\mathcal{L}(\Gamma_0)$ must also be fixed by $\varphi$, contradicting the fact that a parabolic isometry of a visibility space has exactly one fixed point. Therefore $\Gamma_0$ must contain an axial element. By Corollary \ref{fpnil}, $\Gamma_0$ is almost infinite cyclic.

It remains to prove that if $\Gamma_0$ contains an axial element $\varphi$, then $|\mathcal{L}(\Gamma_0)|=2$. This follows easily from the observation that both fixed points of $\varphi$ are contained in $\mathcal{L}(\Gamma_0)$, and no other limit points can occur by the assumption that $|\cL(\Gamma_0)|\leq 2$.

The proof is complete.
\end{proof}

Now we finish the proofs of Theorem \ref{Tits alternative} and Theorem \ref{global fixed point}.

\begin{proof}[Proof of Theorem \ref{Tits alternative}]
By Proposition \ref{free subgroup}, if $|\cL(\Gamma)|\geq 3$, then it contains a free subgroup of rank $2$. This completes the proof of Part $(1).$

If $|\cL(\Gamma)|\leq 2$, then by the characterization in Theorem \ref{elementary char}, $\Gamma$ is almost nilpotent in all cases.

The proof is complete.
\end{proof}

\begin{proof}[Proof of Theorem \ref{global fixed point}]
Recall that a finitely generated almost nilpotent torsion group is always finite (see e.g. \cite{MR2825167}). Then the conclusion clearly follows from  Theorem \ref{Tits alternative}.
\end{proof}

\bibliographystyle{amsalpha}
\bibliography{Tits} 
\end{document}